\documentclass[12pt]{amsart}
 
\usepackage{blindtext}
\usepackage[T1]{fontenc}
\usepackage[utf8]{inputenc}
\usepackage{graphicx}
\usepackage{amssymb, mathrsfs, url, amsfonts, amsthm, amsmath}
\usepackage{esint}
\usepackage{enumerate}
\usepackage{xcolor}
\usepackage{etoolbox}
\usepackage{float}
\usepackage{hyperref}
\usepackage{standalone}
\usepackage{tikz}
\usepackage{subcaption}
\usepackage{varioref}
\usepackage[margin=2cm]{geometry}
\usepackage[english]{babel}
\usepackage{comment}
\usepackage{natbib}
\usepackage{mathrsfs}

\newtheorem{theorem}{Theorem}[section]
\newtheorem{lemma}[theorem]{Lemma}
\newtheorem{corollary}[theorem]{Corollary}
\newtheorem{proposition}[theorem]{Proposition}

\theoremstyle{definition}

\theoremstyle{remark}
\newtheorem{remark}[theorem]{Remark}

\let\inf\relax \DeclareMathOperator*\inf{\vphantom{p}inf}

\newcommand{\N}{\textbf N}
\newcommand{\Euler}{\text{Euler}}
\newcommand{\id}{\text{Id}}
\newcommand{\cL}{\mathcal L}
\newcommand{\sL}{\mathscr L}
\newcommand{\sG}{\mathscr G}
\newcommand{\sF}{\mathscr F}
\newcommand{\cF}{\mathcal F}

\newcommand{\sD}{\mathscr D}
\newcommand{\tr}{\text{tr}}

\newcommand{\cC}{\mathcal{C}}

\newcommand{\cQ}{\mathcal{Q}}

\newcommand{\cP}{\mathscr{P}}
\newcommand{\cB}{\mathcal{B}}
\newcommand{\dom}{\text{dom}}

\newcommand{\bP}{\mathbb P}

\newcommand{\BBT}{\mathbb{T}}
\newcommand{\be}{\begin{equation}}
\newcommand{\ee}{\end{equation}}
\newcommand{\p}{\partial}
\newcommand{\R}{\mathbb{R}}
\newcommand{\Ve}{\vec{\mathbf{e}}}

\newcommand{\ep}{\varepsilon}

\numberwithin{equation}{section}

\title[Unstable Manifolds of Stratified Euler Equations]{Unstable Manifolds of Stratified Euler Equations}
\author[Lin]{Zhiwu Lin}
\address[Z. Lin]{School of Mathematical Sciences, Fudan University, Shanghai 200433, China}
\email{zwlin@fudan.edu.cn}

\author[Wang]{Yanbo Wang}
\address[Y. Wang]{School of Mathematics, Georgia Institute of Technology, Atlanta, GA 30332}
\email{ywang4134@gatech.edu}

\author[Zeng]{Chongchun Zeng}
\address[C. Zeng]{School of Mathematics, Georgia Institute of Technology, Atlanta, GA 30332}
\email{zengch@math.gatech.edu}

\begin{document}

\begin{abstract}

We consider a spectrally unstable steady state $(\rho_0,v_0)$ of the
incompressible stratified Euler equations on a class of $d$-dimensional
domains. Assuming that the linearized equation admits an exponential
dichotomy 
with a reasonably large spectral gap
relative to the
maximal Lyapunov exponent of the background steady flow $v_0$, we construct the local stable and 
unstable manifolds of $(\rho_0,v_0)$. The proof is based on the Lyapunov--Perron
method after reformulating the Euler equation as an ODE on the
infinite-dimensional manifold of volume-preserving Lagrangian maps, with the density treated as a frozen Lagrangian parameter as well as the weight in the $L^2$ metric. We also discuss some applications
to two-dimensional steady flows.

\end{abstract}

\maketitle

\tableofcontents

\section{Introduction}

We consider the incompressible stratified Euler equations 
\begin{equation}
\label{stratified-euler}
\begin{cases}
\rho_t + v \cdot \nabla \rho = 0 & \text{ in } \Omega,\\
     \rho (v_t + v \cdot \nabla v) + \nabla p + \rho g \Ve_d=0 & \text{ in }\Omega,\\    
    \nabla \cdot v = 0 & \text{ in } \Omega,\\
    v \cdot \textbf N = 0 & \text{ on } \partial \Omega,
\end{cases}
\end{equation}
with the slip boundary condition on $\p \Omega$ if $\p \Omega\ne \emptyset$. Here $v= (v^1, \ldots, v^d)^T$ is the incompressible velocity field, $\rho$ the density, and $p$ the pressure. In particular, $-\Ve_d$ represents the ``vertical'' direction of the gravity and $g\ge 0$ the gravitational acceleration. The smooth domain $\Omega \subset \R^d$, with outward unit normal vector $\N$ along $\p \Omega$,  is assumed to be one of the two following cases:
\be \label{E:Omega}
\Omega = \BBT^{d_1} \times \R^{d_2}, \; d_2 >0, \ \text{ or } \ \Omega =  \mathbb{T}^{d_1} \times \Omega_2, \ \text{ where } \ \Omega_2 \subset\subset \R^{d_2}; \quad d_1 + d_2 =d\ge 2. 
\ee
Naturally, the direction of the gravity should not be a periodic direction, namely, 
\be \label{E:g} 
d_2 >0 \; \text{ if } \; g>0. 
\ee

The goal of this paper is to construct local stable and unstable manifolds of unstable equilibria of \eqref{stratified-euler} under a linear exponential dichotomy and a spectral gap condition.  

The study of the instability of a given steady flow involves two tasks.
The first is linear: often one first studies the spectral distribution and linear evolution. This depends on the particular steady state and belongs to the linear stability theory of inviscid stratified flows. If an exponential dichotomy and a sufficiently large exponential growth  gap hold for the linearized  equation, then the second task is nonlinear and geometric.  
One may prove the nonlinear instability, or more strongly, construct the local invariant manifolds in the infinite-dimensional phase space, which is the focus of this paper. Here the main challenges are the derivative loss of the Euler equation and the involvement of the density as well as its coupling to the pressure. 

The motivation of this work comes from two closely related classical topics in inviscid fluid dynamics.  The first is the classical linear and nonlinear instability theory of steady Euler flows, beginning with Rayleigh and Kelvin and continuing through the modern spectral theory of ideal fluids.  Even for homogeneous incompressible Euler equations, passing from an unstable eigenvalue to a nonlinear dynamical conclusion is delicate, because the equation is quasilinear and the transport term $v\cdot \nabla v$ produces a loss of derivatives.  Sufficient criteria for unstable eigenvalues 
have been developed in several important settings, especially for shear flows and rotating flows; see, for example, \cite{DH66, DR04, lin03, Lin04, Lin05} and the references therein.  Nonlinear instability results for homogeneous Euler equilibria based on unstable eigenvalues were obtained, for instance, in \cite{FSV97, Gr00, BGS02, VF03, Lin04}.  The local unstable manifolds of equilibria of the homogeneous Euler equation were constructed in \cite{lin-zeng-manifold} under an exponential dichotomy assumption and a gap condition involving the stretching rate of the steady Lagrangian flow.

The second motivation is the physical and mathematical instability theory of stratified flows.  Density stratification introduces buoyancy effects, internal waves, and density-dependent pressure coupling, all of which change the spectral and nonlinear dynamics.  Classical examples include Rayleigh--Taylor instability, Kelvin--Helmholtz instability, Holmboe instability, and more general  instabilities from the Taylor--Goldstein equation of stratified shear layers.  
In the inviscid parallel-flow setting, the Richardson number and the
Miles--Howard criterion are central stability mechanisms
\cite{LinC55, Miles61,Howard61,MilesHoward64,DR04}, while Howard’s semicircle theorem and
related bounds are discussed in \cite{Howard61,DH66,DR04}.
Numerical and physical studies of stratified shear layers, such as
\cite{Hazel72,CaulfieldPeltier00}, together with the broader account of
buoyancy effects in \cite{Turner73}, illustrate the connection between
linear instability, billow formation, secondary instability, and mixing
in laboratory and geophysical flows.
The Rayleigh--Taylor instability goes back to Rayleigh
\cite{Ray1883} and Taylor \cite{Tay50}; see also the classical
monograph \cite{Chand61} and the reviews \cite{Sharp84,Abarzhi10}.
For smooth Rayleigh--Taylor unstable density profiles, rigorous
mathematical results on growing modes and nonlinear instability of
non-homogeneous incompressible Euler flows were obtained in
\cite{CLR01,La01,HL03,HG03,La08}.
When a background shear flow is present, the spectral picture is more
subtle: unstable essential spectrum may arise in stratified shear flows;
see \cite{essential-spectrum,Shvydkoy-Latushkin} for general advective
mechanisms and \cite{roman} for the Rayleigh--Taylor problem in the
presence of background shear. Related nonlinear instability and
stability issues for stratified shear flow were studied in
\cite{Fr01}.

Beyond parallel shear flows, stratified fluids also exhibit important
instabilities of coherent vortical structures. Examples include the
zigzag instability of vertical vortex pairs in strongly stratified fluids
\cite{BillantChomaz00a,BillantChomaz00b,BillantChomaz00c,
OtheguyChomazBillant06,OtheguyBillantChomaz07,
DeloncleBillantChomaz08,WaiteSmolarkiewicz08}. Elliptic instability has
a classical origin in the instability of elliptic vortices and elliptic
flows \cite{Kida81,Pierrehumbert86,Bayly86}; in rotating or stratified
settings, related mechanisms involve resonant interactions with
inertia--gravity waves
\cite{MiyazakiFukumoto97,AspdenVanneste09,
GuimbardLeDizesLeBarsLeGalLeblanc10}. Most of these works are formulated
in Boussinesq, rotating, viscous, or asymptotic models rather than exactly
in \eqref{stratified-euler}, and hence they are not direct applications
of the present theorem. Nevertheless, they illustrate the physical
relevance of constructing invariant manifolds near linearly unstable
stratified steady or relative steady flows, especially when the
instability is carried by isolated coherent modes rather than by
short-wave essential spectrum.


The present paper addresses the second task -- obtaining nonlinear local unstable and stable manifolds based on the linear properties -- in a general form and then verifies the linear assumptions -- the first task -- in two-dimensional examples.
In this sense we shall prove a nonlinear phase-space theorem: once a sufficiently separated unstable spectral component is known for a stratified Euler steady state, the theorem identifies the set of nearby solutions converging to this equilibrium exponentially as $t\to \pm \infty$ and proves that this set is a local invariant manifold. Under certain conditions, such as the finite dimensionality of the unstable subspace, it yields nonlinear instability in a stronger sense that solutions starting in arbitrarily small neighborhoods of the equilibrium under a stronger Sobolev norm may grow in a weaker Sobolev norm. Thus this result should be interpreted as a rigorous description of the early inviscid
stage of the nonlinear departure from equilibria.

\noindent
$\bullet$ {\bf Equilibrium.} 
Suppose that $(v_0,\rho_0, p_0)$ is a smooth steady state of \eqref{stratified-euler} such that the following are satisfied: 
\begin{subequations}\label{E:steady}
\begin{equation}
    \label{boundedness-of-density}
    0< \rho_- \triangleq \inf_{\bar{\Omega}} \rho_0 \le  \rho_+ \triangleq \sup_{\bar{\Omega}} \rho_0 < +\infty; \quad \nabla \rho_0 \in H^{k+r-1}, \ \text{ where } k> d/2+1, \; r\ge 4, 
\end{equation}
\begin{equation}
    \label{v-zero-rho-zero-requirements}
    D v_0, \nabla p_0 \in L^\infty, \;\; D^2 v_0 \in H^{k+r-2}, \;\; D^2 p_0  \in H^{k+r-1}, 
    \ \text{ where } k> d/2+1, \; r \geq 4.
\end{equation}
\end{subequations}   
We shall consider $H^k$ perturbations of the above steady state, where naturally the velocity perturbations belong to  
\be \label{E:H_Euler}
H^{k}_{\Euler}\triangleq
    \left\{v \in H^k\left(\Omega,\mathbb{R}^d\right) : \nabla \cdot v = 0 \text{ in } \Omega, \ v \cdot \N = 0 \text{ on } \partial \Omega \text{ (if $\p \Omega\ne \emptyset$)} \right\}.
\ee 

\noindent
$\bullet$ {\bf Linearization and exponential dichotomy assumption.} As usual the study of the local dynamics near the equilibrium $(v_0,\rho_0)$ starts with the linearization of \eqref{stratified-euler} at $(v_0,\rho_0)$: 
\begin{equation}
    \label{linearization}
    \begin{pmatrix}
  \delta \rho \\ \delta v 
    \end{pmatrix}_t 
    = 
    - v_0 \cdot \nabla\begin{pmatrix}
          \delta \rho \\ \delta v \end{pmatrix} + \begin{pmatrix} 
          - \delta v \cdot \nabla \rho_0 \\
          - \delta v \cdot \nabla v_0 -\frac{1}{\rho_0}\nabla \delta p + \frac{\delta\rho}{\rho_0^2} \nabla p_0        
    \end{pmatrix}  \triangleq L_0 \begin{pmatrix}
        \delta \rho \\ \delta v 
    \end{pmatrix} +  L_1 \begin{pmatrix}
        \delta \rho \\ \delta v
    \end{pmatrix} \triangleq L \begin{pmatrix}
                \delta \rho \\ \delta v
    \end{pmatrix},
\end{equation}
where the linearized pressure $ \delta p$ is determined by the linearized velocity $\delta v$ and the  density $\delta \rho$ nonlocally as 
\be \label{E:LPressure} \begin{cases} 
- \nabla \cdot \big( \frac 1{\rho_0} \nabla \delta p \big) = \nabla \cdot \big( v_0 \cdot \nabla \delta v +  \delta v \cdot \nabla v_0 -  \frac {\delta \rho}{\rho_0^2} \nabla p_0 \big) = 2 \text{tr} \big( (D v_0) (D \delta v) \big)   - \nabla \cdot \big( \frac {\delta \rho}{\rho_0^2} \nabla p_0 \big), \\
\N \cdot \frac 1{\rho_0} \nabla \delta p = -2 \delta v \cdot  \Pi v_0 + \N \cdot \frac {\delta \rho}{\rho_0^2} \nabla p_0, \qquad \text{ on } \ \p \Omega \ \text{ if } \ \p \Omega \ne \emptyset, 
\end{cases} \ee
to ensure $\delta v\in H_\Euler^k$. Here $\Pi=\Pi(x)$, $x \in \p \Omega$, is the second fundamental form of $\partial \Omega$, see \eqref{E:2ndFF}.

Since $v_0$ is a smooth divergence free vector field with a global Lipschitz constant, it defines a Lagrangian map $u_0 (t, \cdot) \in H^{k+r} (\Omega, \R^d)$ for all $t\in \R$ given by 
\begin{equation}
\label{lagrangian-coordinate-map-0}
  u_{0t}(t,y) = v_0(t,u_0(t,y)), \quad    u_0(0,y) = y,
\end{equation}
which satisfies that $u_0(t, \cdot): \Omega \to \Omega$ is diffeomorphic, $u_0(t, \p \Omega) = \p \Omega$, and $\det Du_0(t, x) \equiv 1$ for all $t \in \R$ (see Subsection \ref{SS:L-coord}).  Hence $L_0$ generates a strongly $C^0$ unitary group $e^{tL_0}$ on $L^2$ which is the transport operator $e^{tL_0} f = f \circ u_0(t, \cdot)^{-1}$. It is also a strongly $C^0$ group on $H^n$, $0 \le n \le k+r$. Using \eqref{E:LPressure}, one can see that $L_1$ is a bounded operator on $H^n$, $0 \le n \le k+r-1$, so $L$ also generates strongly $C^0$ groups on $H^n$, $0 \le n \le k+r-1$. 

We shall construct the local unstable manifolds in the space $(\rho, v) \in (\rho_0, v_0) +  H^k(\Omega,\mathbb{R}) \times H^k_\Euler$, where one notices that the equilibrium $(\rho_0, v_0)$ is assumed to have stronger regularity than the phase space. The following standard exponential dichotomy of the linearized operator $L$ is assumed.  
\begin{enumerate}[(ED)]
    \item    \label{A1}
    There exist closed subspaces $X_{u,cs} \subset  H^k (\Omega,\mathbb{R}) \times H^{k}_\Euler$ such that the following hold. 
    \begin{enumerate} 
    \item $H^k (\Omega,\mathbb{R}) \times H^{k}_\Euler= X_u \oplus X_{cs}$, $X_u \ne \{0\}$. 
    \item $L (X_{u,cs} \cap \dom(L)) \subset X_{u,cs}$, where we denote $L_{u,cs} = L|_{X_{u,cs}}$. 
    \item There exists $M>0,\lambda_u > \lambda_{cs} \geq 0$ such that the groups $e^{t L_{u,cs}}$  on $X_{u, cs}$ satisfy 
    $$
    \left\|e^{t  L_u}\right\|_{\cL(X_u)} \leq M e^{\lambda_u t},\; \forall t \leq 0; \quad \left\|e^{t L_{cs}}\right\|_{\cL(X_{cs})}\leq Me^{\lambda_{cs} t},\; \forall t \geq 0.
    $$
    \end{enumerate}
\end{enumerate}

\noindent
$\bullet$ {\bf Main results.} 
In dynamical systems, the unstable manifold of the steady state $(\rho_0, v_0)$ is the set of solutions which converge to $(\rho_0, v_0)$ at certain exponential rate as $t \to -\infty$. More precisely, we consider solutions satisfying 
\be \label{E:UnstableS-1}
\sup_{t \le 0} e^{-\lambda t} \big(\|v(t)- v_0\|_{H^k} + \|\rho(t) -\rho_0\|_{H^k} \big) \le \ep 
\ee
for some $\lambda \in (\lambda_{cs}, \lambda_u)$ and $\ep>0$. However, the standard approaches of the Lyapunov--Perron integral equation method or the graph transform method by Hadamard do not apply directly to \eqref{stratified-euler} due to the quasilinear nature of the Euler equation. Instead we work mainly in the Lagrangian formulation. Let 
\be \label{E:Lyap-E} \begin{split} 
\mu & > \max \big\{ \lim_{t\to + \infty} |t|^{-1} \log\|Du_0(t)\|_{L^\infty}, \, \lim_{t\to - \infty} |t|^{-1} \log\|Du_0(t)\|_{L^\infty}\big\} \\
& = \max \big\{ \inf_{t> 0} |t|^{-1} \log\|Du_0(t)\|_{L^\infty}, \, \inf_{t< 0} |t|^{-1} \log\|Du_0(t)\|_{L^\infty}\big\}
\triangleq \mu_0 \ge 0, 
\end{split} \ee
where the Lagrangian map $u_0(t)$ of the steady flow $v_0$ was defined in \eqref{lagrangian-coordinate-map-0}. The above equality of the limits, which are both bounded above by $\|D v_0\|_{L^\infty}$, is due to the sub-additivity of $\log \|Du_0(t)\|_{L^\infty}$ in $t$ and Fekete's lemma. Hence we have 
\be \label{u0-L-infinity} 
C_* \triangleq \sup_{t\in \R} e^{-\mu |t|}  \|Du_0(t,\cdot)\|_{L^{\infty}} 
< \infty. 
\ee  
This quantification of the exponential growth rate $\mu$ is needed when we transform back and forth between the Lagrangian and Eulerian formulations.

\begin{theorem}
    \label{main-theorem}
There exists $K_0 \in \mathbb N$ determined by $k+r$ such that, for any equilibrium $(\rho_0,v_0, p_0)$ of the  Euler equation \eqref{stratified-euler} satisfying \eqref{E:steady}, \eqref{u0-L-infinity}, \eqref{E:Lyap-E}, and the exponential dichotomy (ED) with 
\be \label{E:lambda-1} \begin{split} 
& \mathcal{I} \triangleq \big[\lambda_{cs} + 2K_0 \mu, \lambda_u- K_0\mu\big] \ne \emptyset, \\
& \exists r_1, \; 1\le r_1 \le k,  \; X_u \ \text{ is closed in } \ H^{k-r_1}(\Omega,\R)\times H_\Euler^{k-r_1}, 
\end{split} \ee
there exists a topological submanifold (with boundary) $W^u \subset (\rho_0, v_0) + H^k (\Omega, \R)  \times H_\Euler^k$ homeomorphic to a closed ball in $X_u$ satisfying the following properties. 
\begin{enumerate}
        \item For any $1\le r_0 \le \min\{r-2, r_1\}$, $W^u$ is a $C^{r_0}$ submanifold of $ (\rho_0, v_0) + H^{k-r_0} (\Omega, \R)  \times H_\Euler^{k-r_0}$ containing $(\rho_0, v_0)$ in its interior and the tangent space $T_{(\rho_0, v_0)}W^u = X_u$. 
        \item For any initial value $(\rho(0), v(0)) \in W^u$, there exists a unique solution $(\rho(t), v(t), p(t))$, $t \le 0$, to \eqref{stratified-euler} such that $\nabla (p(t) - p_0) \in H^k (\Omega, \R^d)$. Moreover, the following hold. 
        \begin{enumerate} 
        \item The solution can exit $W^u$ only through its boundary, namely, 
        \[
        (\rho(t_0), v(t_0)) \in \p W^u, \ \text{ if } \ t_0 \triangleq \inf\{t\le 0: (\rho(t'), v(t')) \in W^u, \, \forall t'\in [t, 0]\}> -\infty.
        \] 
        \item there exists $C \ge 1$ determined by $k+r, \rho_\pm, \Omega$, $C_*, \mu$, $\mu_0$, 
        $|\rho_0|_{Z^{k+r}}$, $|D v_0|_{Z^{k+r-1}}$, and $|\nabla p_0|_{Z^{k+r-1}}$, where the $|\cdot|_{Z^l}$ norm is defined in \eqref{E:Z^l},  such that   
        \[
        \left\|(\rho(t), v(t)) - (\rho_0, v_0)\right\|_{H^k} \leq Ce^{(\lambda_u-K_0 \mu)t}\|(\rho(0), v(0)) - (\rho_0, v_0)\|_{H^{k-r_1}},\; \forall \, t \leq 0.
        \]
        \end{enumerate}        
        \item If a solution $(\rho(t), v(t), p(t))$, $t \le 0$, to \eqref{stratified-euler} satisfies 
        \be \label{E:ExpDecay}
        \exists\lambda\in \mathcal{I}, \quad \sup_{t \le 0} e^{-\lambda t} \big(\|v(t)- v_0\|_{H^k} + \|\rho(t) -\rho_0\|_{H^k} \big) < \infty, 
        \ee
        then there exists $t_0 \le 0$, such that $(\rho(t), v(t)) \in W^u$ for all $t \le t_0$.     
\end{enumerate}
\end{theorem}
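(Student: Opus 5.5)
The plan follows the strategy of \cite{lin-zeng-manifold}: move the problem to Lagrangian coordinates, where the Euler equation becomes a second-order ODE without derivative loss, run the Lyapunov--Perron method there, and transfer the result back to Eulerian variables.

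\textbf{Step 1: Lagrangian reformulation.} Write the solution through its volume-preserving Lagrangian map $u(t)$, with $u_t = v\circ u$. The density is transported, so $\rho(t)\circ u(t) = \rho(0) \triangleq \bar\rho$ is a frozen Lagrangian parameter. The momentum equation then reads
\[
u_{tt} = -\tfrac{1}{\bar\rho}(\nabla p)\circ u - g\Ve_d .
\]
This is a second-order ODE on the manifold of volume-preserving diffeomorphisms, which we view as a Riemannian submanifold of $L^2(\Omega,\R^d)$ with the weighted metric $\int \bar\rho\, |\cdot|^2\,dy$. The pressure is given by the elliptic problem with coefficient $1/\rho$, as in \eqref{E:LPressure}, and the right side is the second fundamental form term plus gravity. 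The key structural fact is that the map $(\bar\rho,u,u_t)\mapsto -\frac1{\bar\rho}\nabla p\circ u$ is smooth on $H^{k}\times H^k\times H^k$ with no loss of derivatives. This is the analogue of Ebin--Marsden smoothness, with extra smooth dependence on $\bar\rho$ through the weighted elliptic problem. We therefore obtain a smooth ODE in the variables $(\bar\rho, u, u_t)$ with $\bar\rho$ constant in time.

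\textbf{Step 2: Linear dichotomy in Lagrangian form.} Near the steady orbit, write $u = \exp(w)\circ u_0(t)$, or more simply $u = u_0(t) + w\circ u_0(t)$ after projecting to volume-preserving maps, and write $\bar\rho = \rho_0 + \delta\bar\rho$. The linearization in the Lagrangian perturbation $(\delta\bar\rho, w, w_t)$ is conjugate to $L$ of \eqref{linearization} via composition with $u_0(t)^{\pm1}$. By \eqref{u0-L-infinity}, this composition costs at most $C e^{K\mu|t|}$ in $H^k$, with $K$ depending on $k+r$ through the chain rule. We transfer (ED) to a time-dependent exponential dichotomy for the nonautonomous Lagrangian linear flow, with exponents $\lambda_u - K_0\mu$ and $\lambda_{cs}+K_0\mu$. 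The nonlinear remainder has to be composed back and forth too, which is why the gap in \eqref{E:lambda-1} is $2K_0\mu$ on one side. The extra regularity $k+r$ of the steady state is what allows the conjugation to act on $H^k$ perturbations.

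\textbf{Step 3: Lyapunov--Perron.} We solve the integral equation on the weighted space $\sup_{t\le0} e^{-\lambda t}\|\cdot\|_{H^k}$ with $\lambda\in\mathcal I$. The unstable part is integrated from $0$ backward, the center-stable part from $-\infty$, and the data $\xi\in X_u$ is small. A contraction yields a Lipschitz graph $\xi\mapsto h(\xi)\in X_{cs}$, and mapping back to Eulerian variables gives $W^u$. This gives property (2) (existence and uniqueness for $t\le0$, and the exit-through-boundary property) and property (3) (characterization by \eqref{E:ExpDecay}). For (2)(b), the fact that $X_u$ is closed in the weaker space $H^{k-r_1}$ lets us bound the $H^k$ norm by the $H^{k-r_1}$ norm of the data.

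\textbf{Step 4: Smoothness (the main obstacle).} Returning to Eulerian variables loses derivatives: $u\mapsto u^{-1}$, and composition $f\mapsto f\circ u^{-1}$, are only continuous on $H^k$, but $C^{j}$ as maps from $H^k$ into $H^{k-j}$. The Lagrangian graph is $C^{r-2}$ by fiber contraction, since the Lagrangian vector field is smooth. Composing with the Euler map then gives a $C^{r_0}$ submanifold of $H^{k-r_0}$ with $r_0\le\min\{r-2,r_1\}$, where the bound by $r_1$ again uses the closedness of $X_u$ in $H^{k-r_1}$. The tangent space at the equilibrium is $X_u$ because the derivative of the Lyapunov--Perron graph vanishes at $\xi=0$. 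I expect the hardest part to be the bookkeeping of Step 2. The growth factors $e^{K\mu|t|}$ coming from derivatives of $u_0$ must be tracked through the density-weighted pressure, and they must enter only through a fixed $K_0$ determined by $k+r$. I would first derive tame commutator estimates for $f\mapsto f\circ u_0(t)$ in the $Z^l$ norms, and only then close the contraction.
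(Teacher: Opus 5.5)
Your overall architecture matches the paper: a Lagrangian ODE, Lyapunov--Perron, a return to Eulerian variables with the loss $H^k\to H^{k-r_0}$, and the use of the closedness of $X_u$ in $H^{k-r_1}$. However, Step 2 rests on a false claim and skips the main new difficulty. The linearized Lagrangian-to-Eulerian map is not a conjugacy. Linearizing $(\sigma,\psi,\psi_t)\mapsto(\rho,v)$ at $(\rho_0,\id_\Omega,0)$ gives
\[
(\delta\sigma,z_1,z_2)\longmapsto\big((\delta\sigma-\nabla\rho_0\cdot z_1)\circ u_0(t)^{-1},\ (Du_0(t)\,z_2)\circ u_0(t)^{-1}\big).
\]
Its kernel $\{(\nabla\rho_0\cdot z_1,z_1,0): z_1\in H^k_\Euler\}$ is infinite dimensional. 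It consists of infinitesimal relabelings, i.e.\ linearizations of the steady relabeled solutions in \eqref{E:relabeling-2}. If you freeze $\sigma=\rho_0$, the map $\Lambda_0(t)$ of \eqref{E:Lambda0} is still not invertible: its density component only takes values $-\nabla\rho_0\cdot X$ with $X\in H^k_\Euler$. So (ED) cannot be pulled back by composing with $u_0(t)^{\pm1}$, and you must construct a flow-invariant lift of $X_u$.

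The paper builds this lift in steps. First, Lemma \ref{L:X_u-rho} (using $\lambda_u>K\mu$) shows that on $X_u$ the density is slaved to the velocity history, $\tilde\rho(t)=-\nabla\rho_0\cdot\int_{-\infty}^t(Du_0(s-t))^{-1}\big(\tilde v(s)\circ u_0(s-t)\big)\,ds$. It then defines the lift $U(t)$ by integrating the Lagrangian velocity from $-\infty$, sets $\bP_u(t)=U(t)e^{-tL_u}\bP_u\Lambda_0(t)$, and checks $T(t,t_0)\bP_u(t_0)=\bP_u(t)T(t,t_0)$ directly. The resulting dichotomy is not uniform: $\|\bP_u(t)\|\le Ce^{-K\mu t}$ and $\|T_u(t,t_0)\|\le Ce^{(\lambda_u-K\mu)(t-t_0)-K\mu t_0}$. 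Hence the contraction in your Step 3 does not close from a Lipschitz bound on the remainder alone. The paper absorbs this growth, together with the growth $e^{K\mu|s|}$ of $\sD_z\sF_{u,cs}$, into the factor $e^{2\lambda s}$ supplied by the quadratic vanishing of $\sF$ at $z=0$; this is what forces $\lambda>2K\mu$.

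The second missing ingredient is the normalization of labels, without which property (3) does not follow. With $u(0)=\id_\Omega$, a solution satisfying \eqref{E:ExpDecay} has a Lagrangian perturbation that converges as $t\to-\infty$ to a generally nonzero relabeling. Its label density $\rho(0)$ also generally differs from $\rho_0$. So it does not lie in the weighted space where the fixed point is unique. Lemma \ref{L:UnstableS-L} resolves this:
\begin{enumerate}
\item Since $\|\tilde w_t(t)\|_{H^k}\le Ce^{(\lambda-K\mu)t}$ is integrable on $(-\infty,0]$, the limit $\phi=\lim_{t\to-\infty}\Psi_{\rho_0}(\tilde w(t))$ exists.
\item Relabeling by $\phi^{-1}$ gives the unique lift with $w(-\infty)=0$.
\item Then $\sigma=\lim_{t\to-\infty}\rho(t)\circ u(t)=\rho_0$.
\end{enumerate}
This is what justifies running the whole construction with the density frozen at $\rho_0$ on $(H^k_\Euler)^2$.

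Finally, parametrize by right composition, $u=u_0(t)\circ\Psi_{\rho_0}(w)$. If $w$ is the unknown, your forms $\exp(w)\circ u_0(t)$ and $u_0(t)+w\circ u_0(t)$ produce $v_0\cdot\nabla w$ when differentiated in $t$. That reintroduces exactly the derivative loss the Lagrangian formulation is meant to remove.
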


\begin{remark} \label{R:SM}
Instead, assume the linear exponential dichotomy (ED) with the linearly invariant decomposition $H^k (\Omega,\mathbb{R}) \times H^{k}_\Euler= X_{cu} \oplus X_{s}$ along with the bounds $\lambda_s < \lambda_{cu} \le 0$ on the exponential growth of $e^{tL_{cu, s}}$ and 
\[
\lambda_{cu} - \lambda_s > 3K_0\mu, \ \text{ and } \ \exists r_1, \; 1\le r_1 \le k,  \; X_s \ \text{ is closed in } \ H^{k-r_1}(\Omega,\R)\times H_\Euler^{k-r_1}. 
\]
Due to the reversibility $(\rho(t, x), v(t, x)) \to (\rho(-t, x), - v(-t, x))$ of solutions to the stratified Euler equation \eqref{stratified-euler}, $(\rho_0, -v_0)$ is also an equilibrium which satisfies the assumptions of Theorem \ref{main-theorem}. The unstable manifold of $(\rho_0, -v_0)$ immediately yields the {\it stable manifold} $W^s$ of $(\rho_0, v_0)$ by the same transformation. The stable manifold satisfies the analogous properties except for $t \ge 0$, where the  solutions converge to $(\rho_0, v_0)$ exponentially as $t\to +\infty$.   
\end{remark}

By reversing the time direction in the exponential decay estimate in Theorem \ref{main-theorem}(2b), a direct corollary is the stronger nonlinear instability property of the steady state $(v_0, \rho_0, p_0)$ with growth in the rougher $H^{k-r_1}$ norm of solutions with initial perturbations in $H^k$ space, along with a logarithmic exit-time estimate. 

\begin{corollary}
    \label{Hk-L2} 
    Under the assumptions of Theorem \ref{main-theorem}, there exists $C, \delta>0$ determined by $k+r, \rho_\pm, \Omega$, $C_*, \mu$,  $\mu_0$, $\lambda_u  -3K_0 \mu -\lambda_{cs}$, $|\rho_0|_{Z^{k+r}}$, $|D v_0|_{Z^{k+r-1}}$, and $|\nabla p_0|_{Z^{k+r-1}}$ such that for any initial value $(\rho(0),v(0)) \ne (\rho_0, v_0)$ on $W^u$, its solution $(\rho(t),v(t))$ satisfies 
\[
\sup \big\{ \|(\rho(t), v(t)) - (\rho_0, v_0)\|_{H^{k-r_1}} : 0 \le t \le \max \big\{0, \tfrac 1{\lambda_u- K_0 \mu} \log \tfrac {C\delta}{\|(\rho(0), v(0)) - (\rho_0, v_0)\|_{H^{k}}} \big\} \big\} \ge \delta. 
\]
\end{corollary}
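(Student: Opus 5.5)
The plan is to argue by contradiction, combining the backward decay estimate of Theorem \ref{main-theorem}(2b) with the invariance statement (2a).

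Fix an initial value $(\rho(0),v(0)) \in W^u$ with $(\rho(0),v(0)) \ne (\rho_0,v_0)$, and set $\epsilon_0 \triangleq \|(\rho(0),v(0))-(\rho_0,v_0)\|_{H^k}$. Choose $\delta>0$ small, depending only on the listed quantities, so that the following holds: the closed $H^{k-r_1}$-ball of radius $\delta$ around $(\rho_0,v_0)$, intersected with $W^u$, lies in the interior of $W^u$ and stays away from $\partial W^u$. This should be possible because $W^u$ is a $C^{r_0}$ submanifold of the $H^{k-r_0}$ space containing $(\rho_0,v_0)$ in its interior, and its size (the radius of the ball in $X_u$ it is parametrized by) comes from the Lyapunov--Perron construction with constants depending only on the data. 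Since $X_u$ is closed in $H^{k-r_1}$ and the graph map is controlled there, an $H^{k-r_1}$-small point of $W^u$ has a small $X_u$-component and is therefore interior.

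Suppose the conclusion fails, so that $\|(\rho(t),v(t))-(\rho_0,v_0)\|_{H^{k-r_1}}<\delta$ for all $t\in[0,T]$, where
\[
T=\max\Big\{0,\tfrac{1}{\lambda_u-K_0\mu}\log\tfrac{C\delta}{\epsilon_0}\Big\}.
\]
If $T=0$, then $\epsilon_0\ge C\delta$. With $C\ge1$ this forces $\|\cdot\|_{H^{k-r_1}}\ge\delta$ at $t=0$ only if the two norms are comparable, so for this case I would instead arrange $\delta$ so that $C\delta$ exceeds the $H^k$-diameter of $W^u$. Then the case $T=0$ cannot occur for points of $W^u$.

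If $T>0$, I first show that the forward solution exists and stays in $W^u$ on $[0,T]$. Let $t_1$ be the supremum of the times up to which it does. Solutions on $W^u$ extend backward, and forward existence holds by local well-posedness in $H^k$. By (2a) applied to time-shifted solutions, the solution can leave $W^u$ forward only through $\partial W^u$. But the $H^{k-r_1}$-smallness keeps it away from $\partial W^u$, so $t_1\ge T$. Now apply (2b) to the solution shifted so that time $T$ becomes time $0$, evaluated at the old time $0$, i.e. at $t=-T$:
\[
\epsilon_0\le Ce^{-(\lambda_u-K_0\mu)T}\,\|(\rho(T),v(T))-(\rho_0,v_0)\|_{H^{k-r_1}}<Ce^{-(\lambda_u-K_0\mu)T}\delta=\epsilon_0,
\]
which is a contradiction. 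Hence the supremum is at least $\delta$.

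The main obstacle is the forward invariance step. One has to justify that $W^u$ is locally forward invariant away from its boundary, which follows from the characterization (3) together with the uniqueness in (2). One also has to quantify $\delta$ so that $H^{k-r_1}$-small points of $W^u$ are interior, using the closedness of $X_u$ in $H^{k-r_1}$. For this I would use the graph representation of $W^u$ over the $X_u$-ball obtained in the construction.
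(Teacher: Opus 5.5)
Your route is the paper's. The paper gives no separate proof and simply calls the corollary the time-reversal of Theorem \ref{main-theorem}(2b), which is your contradiction $\epsilon_0<Ce^{-(\lambda_u-K_0\mu)T}\delta=\epsilon_0$, obtained by applying (2b) to the solution with initial value $(\rho(T),v(T))$. The gap is the forward-invariance step. The paper leaves it implicit, and the tools you cite do not supply it.

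Theorem \ref{main-theorem}(2a) is a backward-time statement, since the infimum is over $t\le 0$. Applied to time-shifted solutions, it only says that a forward orbit can \emph{enter} $W^u$ through $\partial W^u$. It does not say the orbit can \emph{leave} only through $\partial W^u$: if the orbit left $W^u$ at an interior point and never returned, (2a) would not detect it. Likewise, Theorem \ref{main-theorem}(3) gives membership only for $t\le t_0$ with $t_0$ uncontrolled. So it says nothing about the point at the shifted time $0$, and that point is exactly what you need in $W^u$ to apply (2b) at time $T$.

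The step can be closed with the paper's own machinery, as follows.
\begin{enumerate}
\item At a point $H(x)\in W^u$ with $\|x\|_{H^k}<\delta_2/2$, repeat the relabeling \eqref{E:temp-15} from the proof of Proposition \ref{unstable-manifolds-in-eulerian-coordinates}(3), but with a small \emph{positive} shift $t_0=s>0$.
\item On $[-s,0]$ the relabeled $\tilde z$ is controlled by local well-posedness (Theorem \ref{T:LWP}). On $(-\infty,-s]$ it is controlled by \eqref{fixed-point-initial}, with an extra factor of $e^{(\lambda+K\mu)s}$.
\item Hence $\tilde z$ is small in $S_\lambda$, and Theorem \ref{T:UM-L}(3) gives $\tilde z=z^\star(\cdot,\bP_u(0)\tilde z(0))$. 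That is, $(\rho(s),v(s))=H(x(s))$ with $x(s)$ continuous in $s$ and $x(0)=x$.
\item You also use that $H^{k-r_1}$-small points of $W^u$ have $\|x\|_{H^k}$ strictly below $\delta_2/2$. This is correct and follows from (2b) at $t=0$, the Lipschitz bound on $H^{-1}$, and the equivalence of norms on $X_u$.
\item An open--closed argument on $[0,T]$ then gives $(\rho(t),v(t))\in W^u$ for all $t\in[0,T]$, and your contradiction goes through.
\end{enumerate}

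The case $T=0$ needs no diameter trick. (2b) at $t=0$ already gives $\epsilon_0\le C\|(\rho(0),v(0))-(\rho_0,v_0)\|_{H^{k-r_1}}$, so $\epsilon_0\ge C\delta$ forces the $H^{k-r_1}$ norm at $t=0$ to be at least $\delta$. As you phrased it, requiring $C\delta$ to exceed the diameter of $W^u$ clashes with choosing $\delta$ small. It becomes consistent only after enlarging $C$, which is harmless because a larger $C$ weakens the conclusion.
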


Besides its implication on the growth in the weaker $H^{k-r_1}$ norm in Corollary \ref{Hk-L2}, the assumption that $X_u$ is closed in the weaker Sobolev space is mainly used to ensure the smoothness of $W^u$ when we transform it from the Lagrangian coordinates to the Eulerian coordinates. Without this assumption, the unstable manifold is still a $C^{r-2}$ manifold in the Lagrangian coordinates; see Theorem \ref{T:UM-L}. 

One observes that in \eqref{E:steady}, the steady velocity $v_0$ is not assumed to decay at spatial infinity if $\Omega$ is unbounded. Thus a larger class of steady flows 
with possible linear growth of $v_0$ at spatial infinity satisfy the above assumptions. 
The local well-posedness of the stratified Euler equation \eqref{stratified-euler} with initial data given as $H^k$ perturbations of such $(\rho_0, v_0)$ satisfying \eqref{E:steady} is provided in Section \ref{S:Lagrangian}.

Exponential dichotomy (ED) is both crucial and classical among the assumptions on the linearization. Its verification can be delicate and involves the spectral mapping property. When $\Omega$ is two dimensional, the exponential dichotomy (ED) of certain equilibria is discussed in Section \ref{S:2D}.

In addition to the usual condition $\lambda_u>\lambda_{cs}$ in the exponential dichotomy (ED), we also impose the additional assumption $\lambda_u - \lambda_{cs} > 3 K_0 \mu$. In particular, any $\mu>0$ satisfies \eqref{u0-L-infinity} and \eqref{E:Lyap-E} for shear flows and rotating flows and thus $\lambda_u>\lambda_{cs}$ is indeed sufficient in these cases.  Technically this additional assumption is needed to control the norms of various quantities when transforming the problem between Eulerian and Lagrangian formulations. 
More essentially, the linearized Euler operator may have nonempty unstable essential spectrum due to nontrivial Lyapunov exponents \eqref{E:Lyap-E} of the steady flow $v_0$; see, e.g., \cite{FV91, Lif91, Vishik1996SpectrumOS, SL03, Shvydkoy-Latushkin}.  Essential-spectrum instability often corresponds to short-wave or high-frequency amplification and may be sensitive to small viscosity or density diffusion.  By contrast, isolated unstable eigenvalues, especially those associated with large-scale or shear instabilities, describe coherent modes which often dominate the early inviscid departure from equilibrium.  Under the assumption $\lambda_u - \lambda_{cs} > 3K_0 \mu$, the unstable manifold constructed here captures the nonlinear phase-space structure associated with such modes.   

We now explain the proof strategy and its relation to existing invariant-manifold theory. Under the exponential dichotomy assumption, local invariant manifolds are classical for smooth maps, ODEs, and semilinear PDEs, and also for some quasilinear PDEs with smoothing properties; see, for example, \cite{CHOW1988285}.  More recently, related constructions have been developed for classes of quasilinear PDEs including irrotational water waves in \cite{SZ26, SY26}.  The Euler equation does not fall directly into these categories because the linearized flow does not provide enough regularity control to compensate for the derivative loss in the nonlinearity.  In the construction of the local unstable manifolds of \eqref{stratified-euler}, we follow the Lagrangian approach of \cite{lin-zeng-manifold}, but the stratified case introduces several new features.

In Section \ref{S:Lagrangian}, we develop the Lagrangian formulation and obtain the local well-posedness of solutions with initial data given as  $H^k$ perturbations of a general class of background stratified flow.  
When the density is non-uniform, it is transported by the fluid flow and therefore becomes time independent in Lagrangian coordinates.  It may thus be treated as an external parameter.  The inhomogeneous density also changes the pressure projection from the standard Leray projection to a density-weighted Hodge projection.  
With a careful treatment of the density, particularly when  $D v_0$ does not decay at spatial infinity in the case of unbounded $\Omega$, the Euler equation becomes an ODE on the infinite-dimensional manifold of volume-preserving diffeomorphisms on $\Omega$.  The equilibrium $(\rho_0,v_0)$ becomes a special time-dependent solution in this Lagrangian formulation. In local coordinates the nearby dynamics is transformed into a non-autonomous ODE on $(H_\Euler^k)^2$. In the current paper we adopt a different approach from \cite{lin-zeng-manifold} in the analysis of the dependence of the pressure on the Lagrangian map.  

In Subsection \ref{SS:ED-L}, we derive the exponential dichotomy of the non-autonomous linearized system in the Lagrangian formulation from the Eulerian dichotomy (ED).  This step is much more subtle than in \cite{lin-zeng-manifold} since (ED) is stated for the Eulerian variables $(\delta\rho,\delta v)$, while the Lagrangian unknowns are the flow map and the Lagrangian velocity. Moreover the relabeling symmetry transformation, which plays an essential role in the connection between the Eulerian and Lagrangian formulations, does not leave the Lagrangian density invariant and causes additional complications. 
With the linear analysis accomplished, we then construct the local unstable manifold in the Lagrangian formulation by the Lyapunov--Perron approach in Subsection \ref{SS:InMa-L}. The local unstable manifold $W^u$ in the Eulerian variables is recovered in Subsection \ref{SS:InMa-E}. It is worth pointing out that the transformation between the Lagrangian and Eulerian formulation has certain regularity issues, see Remark \ref{R:non-smooth}, which is why $W^u$ is a smooth submanifold only in $ (\rho_0, v_0) + H^{k-r_0} (\Omega, \R)  \times H_\Euler^{k-r_0}$. 

In Section \ref{S:2D}, we discuss two classes of two-dimensional steady flows in perturbation settings, focusing on the most delicate exponential dichotomy (ED) property.  The first consists of steady flows with small density stratification, viewed as perturbations of homogeneous Euler steady flows. Even at the linear level, this is not a small regular perturbation problem when the unbounded transport operator is perturbed and delicate linear analysis is required. 
The second consists of shear flows with $\rho_0'(x_2)>0$ exhibiting Rayleigh--Taylor instability, analyzed using the spectral results in \cite{essential-spectrum,Shvydkoy-Latushkin, roman}.  In both cases, sufficient conditions for the linear exponential dichotomy are obtained, and Theorem \ref{main-theorem} then gives the corresponding local unstable and stable manifolds if the remaining assumptions are verified.

\vspace {.05in} 
\noindent {\bf Notations.} We use $B_r(X)$ to denote the open ball centered at the origin with radius $r$ in a Banach space $X$. $X^*$ represents the dual space of $X$, consisting of all real-valued bounded linear functionals. For any $k \geq 1$, $\dot{H}^k (\Omega)$ denotes the homogeneous Sobolev spaces, which are defined as the completions of Schwartz functions (smooth functions in the case of bounded $\Omega$) under the $H^{k-1}$ norm of gradient. In particular, for $d=2$ or bounded $\Omega$, $\dot H^k$ is a quotient space where constants are identified with $0$. Moreover we shall need the Banach space $(Z^l, |\cdot|_{Z^l})$ 
\be \label{E:Z^l} 
Z^l \triangleq     \left\{f \in L^\infty \mid \nabla f \in H^{l-1}\right\}, \quad |f|_{Z^l} = |f|_{L^\infty} + |\nabla f|_{H^{l-1}}.
\ee
Clearly for $k>\frac d2+1$, $Z^k= H^k(\Omega)$ if $\Omega$ is bounded. Usually $\sD$ denotes Fréchet derivatives, and  $D$ or $\nabla$ are spatial derivatives.   
We use $\cL(X,Y)$ to denote the set of bounded linear maps from $X$ to $Y$. When $\p \Omega \ne \emptyset$, we use $X^\top$ and $X^\perp$ to denote the tangential and normal component of a vector $X$ at some $x \in \p \Omega$. 
Finally $C>0$ and $K \in \mathbb{N}$ often denote generic upper bounds which might change from line to line.

\section{Analysis in Lagrangian coordinates of the stratified Euler equations with a background flow} \label{S:Lagrangian} 

Writing the Euler equations in Lagrangian coordinates is a well-established approach. 
However, one notices that the assumptions in \eqref{E:steady} on the steady states allow it to have possibly nontrivial asymptotic behavior at spatial infinity. In this section we carefully set up the functional analytic framework to study the stratified  incompressible  Euler equation with a background flow $(v_*, \rho_*)$, 
in which we shall construct local unstable manifolds of steady states in Section \ref{S:InMa}.

Throughout this section, we fix a background flow $(v_*, \rho_*)$,   along with some $p_*$, such that for some $\rho_\pm >0$,  
\begin{equation}
    \label{E:BG-density}  
    \rho_* \in Z_{\rho_+, \rho_-}^{k+r} \triangleq \{ f\in Z^{k+r} \mid \rho_- \le f \le \rho_+\},  \text{ where } \, 
    \; k> d/2+1,\; r \geq 3,
\end{equation}
\begin{equation}
    \label{E:BG-velocity}
    \nabla \cdot v_*=0, \;\; \N \cdot v_*|_{\p \Omega}=0, \;\; Dv_* \in Z^{k+r-1}, \; \nabla p_* \in Z^{k+r-1}, \;\; 
    \;\; k> d/2+1, \;
    r \geq 3,
\end{equation}
\be \label{E:p*}
v_* \cdot \nabla \rho_* \in H^{k+r-1}, \quad w_* \triangleq v_* \cdot \nabla v_* + g \Ve_d + \frac 1{\rho_*} \nabla p_* \in H_\Euler^{k+r-1}. 
\ee
In addition to the extra regularity of $v_*$, the assumptions \eqref{E:p*} require that $(v_*, \rho_*, p_*)$ is an equilibrium of the Euler equation up to error terms in $H^{k+r-1}$. 
These assumptions hold for equilibria satisfying \eqref{E:steady} with $w_*=0$. Near such a background flow, we will rewrite the Euler equation as an infinite dimensional ODE \eqref{E:Euler-L-3} in Lagrangian coordinates and obtain basic estimates. As a byproduct of the analysis in this section, we shall prove the following local well-posedness of the Euler equation with a background flow. 

\begin{theorem} \label{T:LWP} 
Assume $(\rho_*, v_*, p_*)$ satisfy assumptions \eqref{E:BG-density}, \eqref{E:BG-velocity}, and \eqref{E:p*}, then there exist $\delta, T >0$ determined by $\Omega, k+r, \rho_\pm, |\rho_*|_{Z^{k+r}}$, $|D v_*|_{Z^{k+r-1}}$, $|\nabla \rho_* \cdot v_*|_{H^{k+r-1}}$, 
$|w_*|_{H^{k+r-1}}$, and $|\nabla p_*|_{Z^{k+r-1}}$ such that for any 
\[
\rho(0) \in \rho_* + B_\delta (H^k(\Omega, \R)), \quad v(0) \in v_* + B_\delta (H_\Euler^k), 
\]
there exists a unique solution 
\[
v \in C^0( [-T, T], v_* + H_\Euler^k), \;\; \rho \in C^0 ( [-T, T], \rho_* + H^k(\Omega, \R)),  \text{ with } \nabla (p(t) - p_*) \in H^k(\Omega, \R^d). 
\]
Moreover, let $u_*(t, y)$ and $u(t, y)$ denote the Lagrangian maps corresponding to $v_*$ and $v(t)$ defined in \eqref{lagrangian-coordinate-map}, then for any $t\in [-T, T]$,  $u(t, \cdot) : \Omega \to \Omega$ is a diffeomorphism, 
$\det (D u) \equiv 1$, $\rho(t) \circ u(t) = \rho(0)$, and $\psi (t) \triangleq u_*(t) ^{-1}\circ u(t)$ satisfies that 
\begin{align*}
\big( (\rho(0), v(0)) \to (\psi(t) - \id_\Omega, \psi_t(t)) \big) : 
\big( \rho_* + B_\delta( H^k(\Omega, \R))  \big) \times \big( v_* + B_\delta (H_\Euler^k) \big) \to  (H^k(\Omega, \R^d))^2 
\end{align*}
is a $C^{r-2}$ mapping which depends on $t$ continuously. 
\end{theorem}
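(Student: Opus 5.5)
The plan is to rewrite \eqref{stratified-euler} in Lagrangian coordinates relative to the background flow, prove local existence for the resulting ODE by Picard iteration, and then return to Eulerian variables. Let $u(t,y)$ be the Lagrangian map of $v$ and set $\psi(t)=u_*(t)^{-1}\circ u(t)$, so that $u=u_*\circ\psi$. Since $\rho$ is transported, $\rho(t)\circ u(t)=\rho(0)$ for all $t$, so the Lagrangian density $\rho(0)$ is a frozen parameter. The momentum equation then becomes the second order equation
\[
u_{tt} = -\Big(g\Ve_d + \tfrac{1}{\rho(0)}\big(\nabla p\big)\circ u\Big),
\]
where the pressure solves the density-weighted elliptic problem
\[
-\nabla\cdot\big(\rho^{-1}\nabla p\big) = \tr\big((Dv)^2\big) + \ldots,
\]
with the Neumann condition involving $\Pi$ on $\p\Omega$. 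Equivalently, $\rho^{-1}\nabla p$ is the complement of the $\rho$-weighted Hodge projection of $v\cdot\nabla v + g\Ve_d$.

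Substituting $u=u_*\circ\psi$ and subtracting the equation satisfied by $u_*$ produces a non-autonomous second order ODE for $\psi-\id$ in $H^k$. The error term $w_*$ from \eqref{E:p*} enters this equation as a forcing in $H^{k+r-1}$. I would then rewrite the system as a first order ODE
\[
(\psi-\id,\psi_t)\in (H^k)^2,
\]
where the right-hand side is $C^{r-2}$ in $(\psi,\psi_t,\rho(0))$ and continuous in $t$.

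The key structural point is that the pressure term involves no derivative loss once it is expressed in Lagrangian coordinates. The pressure gradient $\nabla p\circ u$ is obtained by solving an elliptic problem whose coefficients are built from $Du$ and $\rho(0)\circ u^{-1}$. Pulled back by $u$, this becomes an elliptic problem on $\Omega$ whose coefficients are polynomial in $D\psi$, $(D\psi)^{-1}$ (recall $\det D\psi=1$) and the $u_*$-dependent quantities. Its source terms are also polynomial; for example $\tr((Dv)^2)=\nabla\cdot(v\cdot\nabla v)$ can be written in divergence form, so that it gains one derivative back. Elliptic regularity in $H^k$ on the domains \eqref{E:Omega} then gives a smooth map
\[
(\psi,\psi_t,\rho(0))\mapsto \rho(0)^{-1}(\nabla p)\circ u \;\in H^k .
\]

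The $C^{r-2}$ loss in regularity should come from composing with the background quantities $v_*,\rho_*,\nabla p_*$, which lie only in $Z^{k+r-1}$. Composition $f\mapsto f\circ\psi$ from $Z^{k+r-1}$ into $H^k$ is $C^{r-1}$ in $\psi$, and one further derivative is lost in the pressure terms. The non-decay of $Dv_*$ at spatial infinity needs care: I would always take differences $f\circ u_*\circ\psi - f\circ u_*$, which lie in $H^k$ even when $f$ is only in $Z$.

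With the smooth vector field in hand, local existence, uniqueness and $C^{r-2}$ dependence on $(\rho(0),v(0))$ follow from standard ODE theory on the Banach space, with $\delta$ and $T$ controlled by the listed norms. Returning to Eulerian variables, set
\[
v=u_t\circ u^{-1}, \qquad \rho=\rho(0)\circ u^{-1}.
\]
The $H^k$ regularity of $\psi-\id$ gives $v-v_*\in C^0([-T,T],H^k_\Euler)$ and $\rho-\rho_*\in C^0([-T,T],H^k)$, and the incompressibility condition $\det Du\equiv1$ is propagated because the pressure was chosen exactly to preserve it. For uniqueness, any Eulerian solution in this class generates a Lagrangian map of the required regularity, and that map solves the same ODE.

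I expect the main obstacle to be the smooth dependence of the weighted pressure map on $(\psi,\rho(0))$ in $H^k$. This requires uniform elliptic estimates for the variable-coefficient Neumann problem on unbounded $\Omega$, working in homogeneous spaces $\dot H^{k+1}$ with lower bounds from $\rho_\pm$. It also requires handling the boundary term, where $u$ must preserve $\p\Omega$.
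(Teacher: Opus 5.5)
\textbf{The gap is in what you call the key structural point.} When you pull back the weighted Neumann problem by $u=u_*\circ\psi$ with only $\psi-\id_\Omega\in H^k$, the coefficient matrix $\sigma^{-1}(Du)^{-1}(Du)^{-T}$ lies only in $Z^{k-1}$. Elliptic regularity for $\tilde p=p\circ u$ then gives at best $\nabla_y\tilde p\in H^{k-1}$, and this is sharp, because $\nabla_y\tilde p=(Du)^T(\nabla p)\circ u$ with $Du\in Z^{k-1}$. Consequently $(\nabla p)\circ u=(Du)^{-T}\nabla_y\tilde p$ comes out only in $H^{k-1}$. That is exactly the one-derivative loss the Lagrangian formulation is meant to avoid.

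The other route also fails. Solving in Eulerian variables, where $\nabla p\in H^k$, and then composing with $u$ gives the right regularity but not differentiability in $\psi$, because of \eqref{E:non-smooth-2}. So the sentence ``elliptic regularity in $H^k$ then gives a smooth map into $H^k$'' is false as stated. The obstacle you flag (uniform estimates on unbounded $\Omega$, homogeneous spaces) is not the real one.

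The missing idea is the commutator argument of Lemma~\ref{L:Hodge-2}:
\begin{enumerate}
\item First obtain $\gamma=(\nabla h)\circ\phi\in H^{k-1}$, with smooth dependence, from the pulled-back equation.
\item Then differentiate the \emph{Eulerian} elliptic problem along vector fields $Y$ tangent to $\partial\Omega$, pull the result back, and solve for the scalar $(Y\circ\phi)\cdot\gamma=(\nabla_Y h)\circ\phi$. This unknown only needs $\nabla_y$ of it in $H^{k-1}$, which $Z^{k-1}$ coefficients do provide. The right-hand side involves only $\gamma$ (already in $H^{k-1}$) and derivatives of the data in $H^{k-2}$.
\item Near $\partial\Omega$, use a tangential frame and recover the normal--normal derivative from the equation.
\end{enumerate}
The paper also arranges the pressure as $(I-Q(\sigma,u))$ applied to terms containing no derivative of $\psi_t$: the $\psi_{tt}$ term is tangent to $\sG$ and is annihilated. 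So only $Q(\sigma,\phi)\in\cL(H^k)$ is needed. With your source $\tr((Dv)^2)$, you would need the analogous smoothness for a map $H^{k-1}\to H^k$, proved the same way.

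\textbf{Two smaller gaps.}

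First, your unconstrained ODE on $(\psi-\id_\Omega,\psi_t)\in(H^k)^2$ is not well defined off the constraint set when $\partial\Omega\neq\emptyset$. An unconstrained $\psi$ need not map $\partial\Omega$ into itself, so $u^{-1}$, $\rho(0)\circ u^{-1}$ and the Neumann problem on $\Omega$ are undefined there. ``Incompressibility is propagated'' therefore cannot replace a formulation on $\sG$. The paper uses the chart $\Psi_{\rho_*}$ of Proposition~\ref{local-coordinate} and the projection $\cP(\rho_*)$ to get a genuine ODE on $B_\delta(H^k_\Euler)\times H^k_\Euler$.

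Second, $\rho(t)-\rho_*\in H^k$ does not follow from $\psi-\id_\Omega\in H^k$ alone. The same applies to the forcing term $\big(\sigma^{-1}-(\rho_*\circ u)^{-1}\big)(\nabla p_*)\circ u$ in the vector field. The reason is that $\rho_*$ is not assumed invariant under $v_*$. One needs $\rho_*\circ u_*(t)-\rho_*=\int_0^t(\nabla\rho_*\cdot v_*)\circ u_*(\tau)\,d\tau$, which is where the hypothesis $v_*\cdot\nabla\rho_*\in H^{k+r-1}$ in \eqref{E:p*} enters. Your plan never uses that hypothesis.
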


This theorem is a direct corollary of Lemma \ref{L:Euler-L-1} proved in Subsection \ref{SS:Euler-L}.

\subsection{Lagrangian coordinates} \label{SS:L-coord}

Given any smooth velocity field 
\be \label{E:temp-1}
v(t,\cdot) \in C^0\big([-\varepsilon,\varepsilon], v_* + H_\Euler^k(\Omega, \R^d)\big), \quad k >d/2+1, 
\ee
it defines a Lagrangian map $u (t, \cdot): \Omega \to \Omega$ 
by solving  
\begin{subequations} \label{lagrangian-coordinate-map}
\begin{equation} \label{E:L-map-1}
  u_t(t,y) = v(t,u(t,y)),
\ee
sometimes along with the initial condition 
\be \label{E:L-map-2}
    u(0,y) = y.
\end{equation}
\end{subequations}
One notes that \eqref{E:L-map-2} is a matter of convenience rather than an essential condition 
due to the relabeling symmetry. While \eqref{E:L-map-2} is not always assumed, we require $\det Du(0, \cdot)\equiv 1$. 
The Lagrangian coordinate $u(t,\cdot)$ is well defined for all $t\in [-\varepsilon,\varepsilon]$ since the velocity field $v(t,\cdot)$ has a global Lipschitz constant in $x$ which is uniform in $t$ due to (\ref{E:BG-velocity}) and $k > 1+\frac{d}{2}$ (and $u(t,\cdot)$ preserves $\p \Omega$ in the case of $\Omega= \BBT^{d_1} \times \Omega_2$).
By standard ODE estimates and the incompressibility in (\ref{E:BG-velocity}), 
\be \label{E:tsG}
u(t,\cdot) \in \tilde \sG \triangleq \left\{\text{diffeomorphic } \varphi
: \Omega \to \Omega 
\mid D \varphi, D\varphi^{-1} \in Z^{k-1}, \det(D \varphi) \equiv 1, \varphi(\partial \Omega) = \partial \Omega  \right\}.
\ee
Clearly $\tilde \sG$ is a group under the operation of composition.  

\begin{remark} 
In deriving $D u(t, \cdot) \in Z^{k-1}$, we used the following inequality: for any $k > d/2+1$, $0\le l \le k$,  $f_1(x)$, and diffeomorphism $f_2(x)$
\be \label{E:composition-1} 
\|f_1 \circ f_2\|_{H^l} \le C \| 1/ \det (D f_2) \|_{L^\infty}^{\frac 12}   |Df_2|_{Z^{k-1}} (1 + |Df_2|_{Z^{k-1}}^{l-1}) \|f_1\|_{H^l}.
\ee
This inequality will be used repeatedly throughout the paper. 
\end{remark}

One observes that, for the Lagrangian map $u(t) \in \tilde \sG$ associated with a solution $(v(t), \rho(t))$ to the Euler equation \eqref{stratified-euler}, the transport of $\rho$ by $v$ ensures   
$(\rho \circ u)(t)$ 
to be independent of $t$, namely 
\begin{equation}
\label{density-constant}
\partial_t (\rho(t) \circ u(t)) =(\partial_t\rho(t) + v(t)\cdot \nabla \rho(t)) \circ u(t)=0 \implies \rho(t) = \rho(0) \circ u(0) \circ u(t)^{-1}.
\end{equation}
Hence in the Lagrangian formulation, the density is reduced to its initial value, the incompressible Euler equation \eqref{stratified-euler} with velocity fields in $v_* + H_\Euler^k$ takes the form of 
\be \label{E:Euler-L} 
u_{tt} + g \Ve_d = -\Big( \frac 1{\rho} \nabla p \Big)  \circ u= - \frac 1{\rho(0) \circ u(0)} (\nabla p) \circ u, \quad u \in \tilde \sG. 
\ee

Let $u_*(t,\cdot)$ denote the Lagrangian map associated with the ($t$-independent) background flow $v_*$. Since $\|D v_*\|_{L^\infty} < \infty$ due to (\ref{E:BG-velocity}), $u_*$    is defined for all $t \in \R$. 
The Euler equations near $(\rho_*, v_*)$ will be analyzed based on \eqref{E:Euler-L} in a neighborhood of $u_*(t)$ in a suitable $H^k$ topology. This is carried out carefully as $u(t)$ -- even $u_*(t)$ itself -- may have nontrivial asymptotic behavior at spatial infinity if $\Omega$ is unbounded. 
More specifically, for a Lagrangian coordinate map $u(t)$ associated with a velocity field $v(t)$ satisfying \eqref{E:temp-1}, consider 
\be \label{E:temp-2} 
u(t) =  u_* (t) \circ \psi(t) \implies \psi_t = (Du_* \circ \psi)^{-1} (v- v_*) \circ u_* \circ \psi, \quad \psi(0)=\id_\Omega. 
\ee
Thus we obtain
\begin{equation}
\label{sG}
\psi(t) \in \sG \triangleq \tilde \sG \cap \big(\id_\Omega + H^k(\Omega,\mathbb{R}^d)\big), \quad \psi - \id_\Omega \in C^1 \left([-\varepsilon,\varepsilon],  H^k\right).
\end{equation}
Apparently $\sG$ is a subgroup of $\tilde \sG$ and $\sG = \tilde \sG$ if $\Omega$ is bounded. We shall convert the incompressible Euler equation \eqref{stratified-euler}, or equivalently \eqref{E:Euler-L}, to be a flow with the configuration space $\sG$. 

\subsection{Structure of $\sG$} \label{SS:sG}

Naturally one may view $\sG \subset \id_\Omega + H^k(\Omega,\mathbb{R}^d)$ as an  infinite dimensional Hilbert submanifold with the tangent space $T_\phi \sG = H^k_\Euler \circ \phi$ at any $\phi \in \sG$. In the case of {\it bounded} $\Omega$, this had been proved in \cite{lin-zeng-manifold} by expressing $\sG$  near $\id_\Omega$ locally as a graph in the  decomposition of $H^k (\Omega, \R^d)$ into the direct sum of  $H_\Euler^k$ and its $L^2$-orthogonal complement. That proof requires some delicate modifications when $\Omega$ is unbounded.\footnote{For simplicity we focus on $\Omega = \BBT^{d_1} \times \R^{d_2}$ with $\p \Omega= \emptyset$ in the case of unbounded $\Omega$ in this paper.} Moreover, based on the form of the kinetic energy with the density stratification, it makes more physical and mathematical sense to consider the weighted $L^2 (\rho dx)$ inner product in the Hodge decomposition of the space $H^k(\Omega, \R^d)$. 
Namely, 
let  
\be \label{E:normal-sG}
\left(H^l_\Euler\right)^{\perp_{\rho}} \triangleq \left\{\frac{1}{\rho} \nabla h: h \in \dot{H}^{l+1} (\Omega, \mathbb{R})\right\}, \quad 0\le l \le k.
\ee
The orthogonality is in the weighted $L^2(\rho dx)$ sense: for any $w \in H^l_{\Euler}$ and $h \in \dot{H}^{l+1}(\Omega, \mathbb{R})$,
$$
\int_\Omega w \cdot  \left(\frac{1}{\rho}\nabla h\right) \rho \, dx = -\int_{\Omega}h(\nabla \cdot w) \, dx + \int_{\partial \Omega}h (w \cdot \textbf{N}) \, dS=0.
$$
Clearly $H^l_\Euler$ and $(H^l_\Euler)^{\perp_{\rho}}$ are closed subspaces of $H^l \left(\Omega,\mathbb{R}^d\right)$. 

\begin{lemma}[Hodge Decomposition]
 \label{hodge}
For any $\rho \in Z_{\rho_+, \rho_-}^k$ and $0 \le l \le k$, $H^l \left(\Omega,\mathbb{R}^d\right) = H^l_\Euler \oplus (H^l_\Euler)^{\perp_{\rho}}$, and the associated projection $\cP(\rho): H^l(\Omega,\mathbb{R}^d) \to H^l_\Euler$ 
satisfies $| \cP(\rho)|_{\cL(H^l)} \le C$ where $C\ge 1$ is determined by $\Omega, k, l, \rho_\pm$, and $|\rho|_{Z^k}$.
\end{lemma}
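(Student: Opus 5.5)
The plan is to construct the projection explicitly through a weighted Neumann problem. Given $w \in H^l(\Omega,\R^d)$, we look for $h \in \dot H^{l+1}$ with $w - \frac1\rho \nabla h \in H^l_\Euler$. This is equivalent to the elliptic problem
\[
\nabla\cdot\Big(\frac1\rho \nabla h\Big) = \nabla \cdot w \ \text{ in } \Omega, \qquad \N\cdot \frac1\rho\nabla h = \N\cdot w \ \text{ on } \p\Omega,
\]
understood weakly as $\int_\Omega \frac1\rho \nabla h\cdot\nabla\phi\,dx = \int_\Omega w\cdot\nabla\phi\,dx$ for all $\phi\in \dot H^1$. We then set $\cP(\rho)w = w - \frac1\rho\nabla h$. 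Uniqueness of the decomposition, and the directness of the sum, follow from the weighted orthogonality computation displayed before the lemma. If $w\in H^l_\Euler\cap (H^l_\Euler)^{\perp_\rho}$, then $\int \rho|w|^2 = 0$, so $w=0$.

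\textbf{Case $l=0$.} Since $\rho_-\le\rho\le\rho_+$, the bilinear form is coercive on $\dot H^1$. This holds both for bounded $\Omega_2$ and for $\Omega=\BBT^{d_1}\times\R^{d_2}$, where $\dot H^1$ is the completion of Schwartz functions under $\|\nabla\cdot\|_{L^2}$. The right side is bounded by $\|w\|_{L^2}\|\nabla\phi\|_{L^2}$. Lax--Milgram then gives $h$ with $\|\nabla h\|_{L^2}\le \rho_+\|w\|_{L^2}$. Hence $|\cP(\rho)|_{\cL(L^2)}\le C(\rho_\pm)$; in fact $\cP(\rho)$ is the $L^2(\rho\,dx)$-orthogonal projection, so its weighted norm is $1$. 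Closedness of both subspaces follows.

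\textbf{Case $1\le l\le k$.} We obtain higher regularity by induction on $l$ via elliptic regularity for divergence-form operators with coefficient $a=1/\rho$. Note that $a\in Z^k$ because $\nabla(1/\rho) = -\nabla\rho/\rho^2$, and $H^{k-1}$ is an algebra-type module for $k>d/2+1$.

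In the unbounded case $\Omega=\BBT^{d_1}\times\R^{d_2}$ there is no boundary, and we use difference quotients or direct energy estimates. Applying $\p^\alpha$ with $|\alpha|=m\le l$ to the equation gives
\[
\nabla\cdot(a\nabla\p^\alpha h) = \nabla\cdot\p^\alpha w - \sum_{\beta<\alpha}\binom{\alpha}{\beta}\nabla\cdot(\p^{\alpha-\beta}a\,\nabla\p^\beta h).
\]
Testing with $\p^\alpha h$ and bounding the commutator terms by Kato--Ponce type product estimates gives the bound
\[
\|\p^{\alpha-\beta}a\,\nabla\p^\beta h\|_{L^2}\le C(|\nabla a|_{H^{k-1}})\,\|\nabla h\|_{H^{m-1}}.
\]
This uses $k>d/2+1$: we put $\p^{\alpha-\beta}a$ in $L^\infty$ when $|\alpha-\beta|$ is small and in $H^{k-|\alpha-\beta|}$ paired with Sobolev embedding otherwise. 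Induction then yields $\|\nabla h\|_{H^l}\le C\|w\|_{H^l}$.

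For bounded $\Omega_2$ we localize with a partition of unity. Tangential derivatives near $\p\Omega$ are handled as above, and they are compatible with the Neumann condition after flattening the boundary. Normal derivatives are recovered from the equation itself, via $a\,\p_N^2 h = \nabla\cdot w - (\text{tangential and lower-order terms})$, using $a\ge 1/\rho_+$. The constant depends only on $\Omega,k,l,\rho_\pm,|\rho|_{Z^k}$. Finally, $\frac1\rho\nabla h\in H^l$ follows from the product estimate with $1/\rho\in Z^k$, so $|\cP(\rho)|_{\cL(H^l)}\le C$.

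\textbf{Main obstacle.} The hard step is the top-order commutator when $l=k$. There the term $\p^\alpha a\,\nabla h$ has $a$ differentiated $k$ times, so only $\nabla a\in H^{k-1}$ is available. It must therefore be estimated as $\|\p^\alpha a\|_{L^2}\|\nabla h\|_{L^\infty}$, with $\|\nabla h\|_{L^\infty}\lesssim\|\nabla h\|_{H^{k-1}}$ controlled by the previous induction step. A further complication in the unbounded setting is that $\rho$ does not decay, so the lower-order control must come only from $\|\nabla h\|_{L^2}$ and never from $\|h\|_{L^2}$. This is why the whole argument is organized around $\nabla h$ in $\dot H^{l+1}$.
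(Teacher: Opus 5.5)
Your proposal is correct and follows the same route as the paper. You solve the weighted Neumann problem $\nabla\cdot(\frac1\rho\nabla h)=\nabla\cdot w$ (with the conormal boundary condition when $\partial\Omega\neq\emptyset$) by Lax--Milgram on $\dot H^1$, get uniqueness from the $L^2(\rho\,dx)$-orthogonality, and upgrade to $\nabla h\in H^l$ by elliptic regularity. The paper only cites ``standard elliptic theory'' for that last step; your commutator and product estimates (using $k>d/2+1$ and $\nabla(1/\rho)\in H^{k-1}$), together with the tangential/normal splitting near $\partial\Omega$, are a faithful expansion of that citation.
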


\begin{proof} 
The uniqueness of the decomposition follows directly from the orthogonality and the positive upper and lower bounds $\rho_\pm$ in \eqref{E:BG-density} of $\rho$. We shall focus on the existence of the decomposition. 

In the case of $\Omega = \mathbb T^{d_1} \times \mathbb{R}^{d_2}$, let $X \in H^{l}(\Omega,\mathbb{R}^d)$. Since $\nabla \cdot X \in \dot H^1(\Omega, \R)^*$, by \eqref{E:BG-density} and the Lax-Milgram Theorem, there exists a unique $h \in \dot H^1(\Omega, \R)$ such that $-\nabla \cdot \left(\frac{1}{\rho}\nabla h\right) = \nabla \cdot X$. From the standard elliptic theory using 
the regularity \eqref{E:BG-density} of $\rho$, we obtain $ \nabla h \in H^l (\Omega, \R)$ and thus $\frac{1}{\rho}\nabla h \in \left(H^l_\Euler\right)^{\perp_{\rho}}$. Finally, let $w= X + \frac{1}{\rho}\nabla h \in H_\Euler^l$ and it completes the proof of the decomposition for $\Omega = \mathbb T^{d_1} \times \mathbb{R}^{d_2}$. 

In the bounded case of $\Omega = \mathbb T^{d_1} \times \Omega_2$ with $\Omega_2 \subset\subset \R^{d_2}$, since 
\[
\left( f \in \dot H^1 (\Omega, \R) \to \int_\Omega f  \nabla \cdot X dx - \int_{\p \Omega} f X \cdot \N dS = - \int_\Omega \nabla f \cdot X dx\right) \in \dot H^1(\Omega, \R)^*, 
\]
and 
\[
(f,g) \to \int_\Omega \frac 1{\rho} \nabla f \cdot \nabla g \, dx
\]
is bounded, bilinear, and uniformly positive on $\dot H^1(\Omega, \R)$, by the Lax-Milgram Theorem there is a unique solution $h \in \dot H^1(\Omega, \R)$ to 
\be \label{E:Poisson-BVP}
-\nabla \cdot \left(\frac 1{\rho}\nabla h\right) = \nabla \cdot X \; \text{ in } \ \Omega, \quad 
   \frac 1{ \rho}\nabla h \cdot \N = - X \cdot \N \; \text{ on }\ \partial \Omega.
\ee
Again the standard elliptic theory and \eqref{E:BG-velocity} yields $ \nabla h \in H^l(\Omega, \R)$. The rest of the proof follows in a similar fashion as in the case of $\Omega = \mathbb T^{d_1} \times \mathbb{R}^{d_2}$. 
\end{proof}

\begin{remark} \label{R:LPressure}
In the linearization \eqref{linearization} of the Euler equation,  equation \eqref{E:LPressure} of the linearized pressure $\delta p$ is equivalent to 
\[
- \frac 1{\rho_0} \nabla\delta p =\big(I- \cP(\rho_0)\big) \big( v_0 \cdot \nabla \delta v +  \delta v \cdot \nabla v_0 -  \frac {\delta \rho}{\rho_0^2} \nabla p_0 \big). 
\] 
\end{remark}

To see that $\sG$ is a Hilbert manifold with model space $H^k_\Euler$ locally near the identity map $\id_{\Omega} \in \mathscr G$, we prove that $\mathscr G$ is a graph of a smooth mapping from $H^k_{\Euler}$ to $ (H^k_\Euler)^{\perp_{\rho}}$. 

\begin{proposition}
    \label{local-coordinate}
    Given any ${\rho}\in Z_{\rho_+, \rho_-}^k$, there exist $\delta > 0$ determined only by $\Omega, k, \rho_\pm, |\rho|_{Z^k}$ and  
    $\Psi_{\rho} \in C^\infty \big( B_{\delta}(H^k_{\Euler}) \to \id_\Omega + H^k(\Omega, \mathbb{R}^d) \big)$
    such that the following holds:
    \begin{enumerate}
        \item $\Psi_{\rho}(0) = \id_\Omega$, $\mathscr D \Psi_{\rho} (0) = I|_{H^k_{\Euler}},$
        \item  $\Psi_{\rho} (w) - w - \id_{\Omega} \in \left(H^k_\Euler\right)^{\perp_{\rho}}$  for any $ w \in B_{\delta} \left(H^k_\Euler\right) ,$
         and
         \item $\Big(\id_\Omega + B_{{\delta}}(H_\Euler^k) \oplus B_{{\delta}}\big((H^k_\Euler)^{\perp_{\rho}}\big) \Big) \cap \mathscr G=\Psi_{\rho}\left( B_{\delta} \left(H^k_{Euler}\right)\right)$.
    \end{enumerate}
\end{proposition}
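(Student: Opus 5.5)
We will construct $\Psi_\rho$ by the implicit function theorem, working with the map
\[
\mathcal F(w, h) \triangleq \det\Big(I + Dw + D\big(\tfrac1\rho \nabla h\big)\Big) - 1, \qquad w \in H^k_\Euler, \ \ \tfrac1\rho\nabla h \in (H^k_\Euler)^{\perp_\rho},
\]
that is, with $\phi = \id_\Omega + w + \frac1\rho\nabla h$. Since $k-1 > d/2$, $H^{k-1}$ is an algebra. The determinant expands as $\nabla\cdot(w + \frac1\rho\nabla h)$ plus a finite sum of products of entries of $Dw$ and $D(\frac1\rho \nabla h)$, each product having at least two factors in $H^{k-1}$. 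Hence $\mathcal F$ is a polynomial, and so $C^\infty$, from $B_\delta(H^k_\Euler)\times B_\delta((H^k_\Euler)^{\perp_\rho})$ into $H^{k-1}(\Omega,\R)$. No constant term appears, so no decay issue arises at spatial infinity when $\Omega=\BBT^{d_1}\times\R^{d_2}$. The boundary condition $\phi(\p\Omega)=\p\Omega$ (bounded case) also has to be encoded. For $\Omega = \BBT^{d_1}\times \Omega_2$ we write it as $\mathcal B(w,h) \triangleq \mathrm{dist}_{\p\Omega}\big(\phi|_{\p\Omega}\big)=0$, using a signed distance function $\eta$ of $\p\Omega$ near $\p\Omega$. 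The condition becomes $\eta\circ\phi|_{\p\Omega}=0$, which is smooth in $\phi$ with values in $H^{k-1/2}(\p\Omega)$. Alternatively, since $\det D\phi\equiv 1$ and we need $\phi(\Omega)=\Omega$, one can combine the two conditions as in \cite{lin-zeng-manifold}.

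The linearization in the second variable at $(0,0)$ is
\[
\big(\tfrac1\rho\nabla h\big) \mapsto \Big(\nabla\cdot\big(\tfrac1\rho\nabla h\big), \ \tfrac1\rho\nabla h\cdot \N|_{\p\Omega}\Big),
\]
and the linearization in $w$ vanishes, because $\nabla\cdot w=0$ and $w\cdot\N=0$. The key linear step is to show that this operator is an isomorphism from $(H^k_\Euler)^{\perp_\rho}$ onto the natural target. That target is the set of pairs $(f, g)\in H^{k-1}(\Omega)\times H^{k-1/2}(\p\Omega)$ satisfying the compatibility condition $\int_\Omega f = \int_{\p\Omega} g$ in the bounded case. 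In the unbounded case it is $\{f \in H^{k-1}: f\in \dot H^1(\Omega)^*\}$, i.e. $f = \nabla\cdot X$ with $X\in H^k$. Surjectivity and the bounds come from the Lax--Milgram argument and the elliptic regularity used in the proof of Lemma \ref{hodge}. Injectivity is immediate from the uniqueness there.

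The main obstacle is that $\mathcal F$ must map into this restricted target, so that the implicit function theorem applies with an isomorphism. In the bounded case, the compatibility condition should follow from the identity $\int_\Omega(\det D\phi -1)\,dx = |\phi(\Omega)|-|\Omega|$. In the unbounded case we use the divergence structure of the Jacobian: $\det D\phi - 1 = \nabla\cdot Q(\phi-\id)$, where $Q$ is a polynomial built from cofactors, with $Q(\phi - \id)\in H^k$ smoothly. This puts $\mathcal F$ in $\nabla\cdot H^k$. I would therefore recast the equation as $\nabla\cdot\big(w+\frac1\rho\nabla h + \tilde Q(w,h)\big)=0$, with $\tilde Q$ quadratic and smooth into $H^k$. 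The equation is then equivalent to $(I-\cP(\rho))\big(\frac1\rho\nabla h+\tilde Q\big)=0$ (with the boundary terms included), whose linearization in $h$ is the identity on $(H^k_\Euler)^{\perp_\rho}$ by Lemma \ref{hodge}. The implicit function theorem then gives a smooth $h(w)$ with $h(0)=0$ and $\mathscr D h(0)=0$. We define $\Psi_\rho(w) = \id_\Omega + w + \frac1\rho\nabla h(w)$. This yields (1) and (2), with $\delta$ controlled by the bounds of Lemma \ref{hodge}.

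For (3), the uniqueness part of the implicit function theorem shows that any element of $\mathscr G$ in the product ball equals $\Psi_\rho(w)$. Conversely, we must check that $\Psi_\rho(w)\in\mathscr G$. Indeed, $\det D\Psi_\rho(w) \equiv 1$, and $\Psi_\rho(w)-\id_\Omega$ is small in $H^k\subset C^1$, so it is a local diffeomorphism close to the identity. Properness, together with the preserved boundary and degree one, makes it a global diffeomorphism of $\Omega$. Finally, $D\Psi_\rho(w)^{-1}$ lies in $Z^{k-1}$ as the cofactor matrix.
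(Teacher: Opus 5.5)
Your approach is the paper's in the unbounded case. You apply the implicit function theorem to the Jacobian map $X\mapsto\det(I+DX)-1$ on $H^k_\Euler\oplus(H^k_\Euler)^{\perp_\rho}$ and use the cofactor (Piola) identity to place it in $H^{k-1}\cap(\dot H^1)^*$. You then invert $q=\frac1\rho\nabla h\mapsto\nabla\cdot q$ from $(H^k_\Euler)^{\perp_\rho}$ onto that space by Lax--Milgram and elliptic regularity, and set $\Psi_\rho(w)=\id_\Omega+w+\Phi(w)$. That formulation is correct and already suffices.

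\textbf{The recast is wrong as justified.} You assert that $Q(\phi-\id)$ lies in $H^k$, but the cofactor field $\int_0^1\mathrm{cof}(I+tDX)^TX\,dt$ is only in $H^{k-1}$ for $X\in H^k$, since it contains products of $X$ with $DX$. For $d=2$ its quadratic part is $\tilde Q(X)=\frac12\big(X^1\partial_2X^2-X^2\partial_2X^1,\ X^2\partial_1X^1-X^1\partial_1X^2\big)$. Take $X^1\equiv1$ near the support of $X^2$; the first component is then $\frac12\partial_2X^2$, which is generally not in $H^k$. So Lemma~\ref{hodge} applied to $\tilde Q$ only gives $(I-\cP(\rho))\tilde Q\in(H^{k-1}_\Euler)^{\perp_\rho}$, and your fixed-point map $\frac1\rho\nabla h\mapsto-(I-\cP(\rho))\tilde Q$ appears to lose a derivative. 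The conclusion survives for a different reason. Write $(I-\cP(\rho))\tilde Q=\frac1\rho\nabla\eta$; then $\eta$ is determined by $\nabla\cdot\tilde Q=\det(I+DX)-1-\nabla\cdot X\in H^{k-1}\cap(\dot H^1)^*$, and elliptic regularity gives $\nabla\eta\in H^k$. That is exactly the paper's formulation, which needs only an $L^2$ bound on the cofactor field to control the $(\dot H^1)^*$ norm. So either drop the recast or justify it through the weighted Poisson problem. Your identity $H^{k-1}\cap(\dot H^1)^*=\nabla\cdot H^k$ is true, but it comes from $\nabla\Delta^{-1}$, not from the cofactor polynomial.

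\textbf{Bounded case.} The paper delegates this case to \cite{lin-zeng-manifold}. Your nonlinear pair $(\det D\phi-1,\ \mathcal B)$ does not take values in the compatibility subspace $\{\int_\Omega f=\int_{\p\Omega}g\}$. Indeed $\int_\Omega(\det D\phi-1)\,dx=|\phi(\Omega)|-|\Omega|$ exactly, whereas $\int_{\p\Omega}\mathcal B\,dS$ agrees with it (up to sign convention) only to first order. For a radial dilation of a disk they differ by $\pi s^2$. Since the linearization has a one-dimensional cokernel, you need a Lyapunov--Schmidt step. For example, solve $\det D\phi-1=c$ and $\mathcal B=0$ with an extra scalar unknown $c$, then deduce $c=0$ from $\phi(\p\Omega)\subset\p\Omega\Rightarrow\phi(\Omega)=\Omega$. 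Alternatively, first restrict to boundary-preserving maps, as Ebin--Marsden do.

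\textbf{Item (3).} Your check that $\Psi_\rho(w)$ is a global diffeomorphism in $\tilde\sG$ is more careful than the paper's ``clearly.'' Note that $D(\phi^{-1})=(\mathrm{cof}\,D\phi)^T\circ\phi^{-1}$ also needs \eqref{E:composition-1} to land in $Z^{k-1}$.
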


\begin{figure}[H]
  \tikzset{every picture/.style={line width=0.75pt}} 

\begin{tikzpicture}[x=0.75pt,y=0.75pt,yscale=-1,xscale=1]

\draw    (167,324.54) -- (494.75,324.5) ;
\draw [shift={(496.75,324.5)}, rotate = 179.99] [color={rgb, 255:red, 0; green, 0; blue, 0 }  ][line width=0.75]    (10.93,-3.29) .. controls (6.95,-1.4) and (3.31,-0.3) .. (0,0) .. controls (3.31,0.3) and (6.95,1.4) .. (10.93,3.29)   ;
\draw    (326.38,356) -- (326.38,86.77) ;
\draw [shift={(326.38,84.77)}, rotate = 90] [color={rgb, 255:red, 0; green, 0; blue, 0 }  ][line width=0.75]    (10.93,-3.29) .. controls (6.95,-1.4) and (3.31,-0.3) .. (0,0) .. controls (3.31,0.3) and (6.95,1.4) .. (10.93,3.29)   ;
\draw  [draw opacity=0][fill={rgb, 255:red, 208; green, 2; blue, 27 }  ,fill opacity=1 ][line width=3]  (329.68,207.39) .. controls (329.68,205.59) and (328.2,204.14) .. (326.38,204.14) .. controls (324.56,204.14) and (323.08,205.59) .. (323.08,207.39) .. controls (323.08,209.19) and (324.56,210.64) .. (326.38,210.64) .. controls (328.2,210.64) and (329.68,209.19) .. (329.68,207.39) -- cycle ;
\draw   (245.04,131.46) -- (405.52,131.46) -- (405.52,289.83) -- (245.04,289.83) -- cycle ;
\draw    (326.58,289.65) -- (404,289.99) ;
\draw [shift={(406,290)}, rotate = 180.25] [color={rgb, 255:red, 0; green, 0; blue, 0 }  ][line width=0.75]    (10.93,-3.29) .. controls (6.95,-1.4) and (3.31,-0.3) .. (0,0) .. controls (3.31,0.3) and (6.95,1.4) .. (10.93,3.29)   ;
\draw [shift={(324.58,289.64)}, rotate = 0.25] [color={rgb, 255:red, 0; green, 0; blue, 0 }  ][line width=0.75]    (10.93,-3.29) .. controls (6.95,-1.4) and (3.31,-0.3) .. (0,0) .. controls (3.31,0.3) and (6.95,1.4) .. (10.93,3.29)   ;
\draw [line width=0.75]    (326.97,203) -- (326.03,132) ;
\draw [shift={(326,130)}, rotate = 89.24] [color={rgb, 255:red, 0; green, 0; blue, 0 }  ][line width=0.75]    (10.93,-3.29) .. controls (6.95,-1.4) and (3.31,-0.3) .. (0,0) .. controls (3.31,0.3) and (6.95,1.4) .. (10.93,3.29)   ;
\draw [shift={(327,205)}, rotate = 269.24] [color={rgb, 255:red, 0; green, 0; blue, 0 }  ][line width=0.75]    (10.93,-3.29) .. controls (6.95,-1.4) and (3.31,-0.3) .. (0,0) .. controls (3.31,0.3) and (6.95,1.4) .. (10.93,3.29)   ;
\draw  [color={rgb, 255:red, 74; green, 144; blue, 226 }  ,draw opacity=1 ][line width=1.5]  (217.24,277) .. controls (137.39,-9) and (507.8,420) .. (427.95,134) ;

\draw (508.68,311.42) node [anchor=north west][inner sep=0.75pt]   [align=left] {$\displaystyle H_{\text{Euler}}^{k}$};
\draw (293.68,46.61) node [anchor=north west][inner sep=0.75pt]   [align=left] {$\displaystyle \left( H_{\text{Euler}}^{k}\right)^{\perp _{\rho }}$};
\draw (297.92,210.29) node [anchor=north west][inner sep=0.75pt]  [color={rgb, 255:red, 208; green, 2; blue, 27 }  ,opacity=1 ] [align=left] {$\displaystyle Id_{\Omega }$};
\draw (452.04,177.3) node [anchor=north west][inner sep=0.75pt]  [font=\large,color={rgb, 255:red, 74; green, 144; blue, 226 }  ,opacity=1 ] [align=left] {$\displaystyle \mathscr{G} =\Psi _{\rho }( w)$};
\draw (333.77,157.92) node [anchor=north west][inner sep=0.75pt]   [align=left] {$\displaystyle \delta $};
\draw (355.94,292.97) node [anchor=north west][inner sep=0.75pt]   [align=left] {$\displaystyle \delta $};

\end{tikzpicture}
  \caption{Local Coordinates near the Identity Map}
  \label{local-graph}
\end{figure}

\begin{proof}
We shall only give the proof in the case of unbounded domains $\Omega = \mathbb{T}^{d_1} \times \mathbb{R}^{d_2}$ without boundaries. 
For bounded domains $\Omega = \Omega_1 \times \mathbb T^{d_2}$, the proof is almost identical to the one of Proposition 2.1 in \cite{lin-zeng-manifold} except for using the density weighted  Hodge decomposition given in Lemma \ref{hodge} instead. 
Consider the map $F: H^k(\Omega,\mathbb{R}^d) \to H^{k-1}(\Omega,\mathbb{R}) \cap \big(\dot{H}^1(\Omega,\mathbb{R})\big)^\ast $ defined as 
    $$
    F(X)\triangleq 1-\det(I+DX), \; \forall X \in H^k(\Omega,\mathbb{R}^d).
    $$
    Obviously, $F(X)  \in H^{k-1}(\Omega,\mathbb{R})$. To verify that $F(X) \in \big(\dot{H}^1 (\Omega,\mathbb{{R}})\big)^\ast$, it suffices to show that $F(X)$ is in the divergence form.  Recall
    $\sum_{j = 1}^d \partial_{j}\text{cof}(D (\id+tX))_{i,j}=0$ for any $i = 1,2,\cdots,d$, where cof$(A)$ denotes the cofactor matrix of a matrix $A$. 
    By direct computations,
    $$
    \begin{aligned}
        F(X)
        &= -\int_0^1 \frac{d}{dt}\det(I + t DX) \, dt=-\int_0^1 \tr \left((\text{cof}(I+ t DX))^T D X \right)\,dt \\
        &=\int_{0}^1- \nabla \cdot \left(\text{cof}(I + t DX)^T X\right) - \sum_{i=1}^d X^i  \sum_{j = 1}^d \partial_{j}\text{cof}(D (\id+tX))_{i,j} \, dt\\
        &=\nabla \cdot \left(-\int_0^1 \text{cof}(I + t D X)^TX \, dt\right)\in \big(\dot{H}^1 (\Omega,\mathbb{{R}})\big)^\ast, 
    \end{aligned}
    $$
    so $F$ is well-defined. Since the cofactor$(A)$ is the sum of products of $d-1$ entries of $A$, the above formula also implies the $C^\infty$ smoothness of $F$ due to $k > d/2+1$. 
        
        Recall that $\sG$, defined in (\ref{sG}), is the collection of volume-preserving diffeomorphisms in $\id_\Omega + H^k(\Omega,\mathbb{R}^d)$, and one  observes that, for sufficiently small $\ep>0$, 
    $$
    F^{-1}(0) \cap B_\varepsilon(H^k(\Omega,\mathbb{R}^d)) = (\sG -\id_\Omega)\cap B_\varepsilon(H^k(\Omega,\mathbb{R}^d)).
    $$ 
    

    For any $q = \frac 1{\rho}\nabla h \in (H^k_\Euler)^{\perp_{{\rho}}}$, where $h \in \dot{H}^{k+1}(\Omega,\mathbb R)$, one could compute 
    $$
    \sD F(0)q= -\nabla \cdot q=-\nabla \cdot \left(\frac 1{\rho} \nabla h\right),
    $$
    which is an isomorphism from $(H^k_\Euler)^{\perp_{\rho}}$ to $H^{k-1} \cap (\dot{H}^1)^\ast$ due to \eqref{E:BG-density}, and the standard elliptic theory as in the proof of Lemma \ref{hodge}. By the implicit function theorem, there exist $0 < \delta \leq \varepsilon$ and a function $\Phi\in C^\infty\big(B_{\delta}(H^k_\Euler),  B_\delta((H^k_\Euler)^{\perp_{\rho}})\big)$ such that $\Phi(0) = 0$, $\sD \Phi(0) =0$, and 
\[
F(w +q ) = 0, \; (w,q) \in B_{\delta}(H^k_\Euler) \times B_\delta \big((H^k_\Euler)^{\perp_{\rho}} \big) \ \text{ iff } \ q = \Phi (w). 
\]
Let $\Psi_{\rho}: B_{\delta}(H^k_\Euler) \to \id_\Omega  + H^k(\Omega,\mathbb R^d)$ be defined as  
    $$
    \Psi_{\rho}(w)=\id_\Omega + w + \Phi(w), \; \forall w   \in B_{\delta}(H^k_\Euler),
    $$
and clearly it satisfies all the desired properties. 
    \end{proof}
    
\begin{remark} \label{R:sG}
For any $\phi \in \mathscr G$, 
$\Psi_{\rho} (\cdot) \circ \phi$ gives a smooth local coordinate system of $\sG$ near $\phi$, which makes $\mathscr G$ a rigorous Hilbert manifold with the model space $H^k_\Euler$. The tangent and normal space of $\sG$ at $\phi$ are given by 
\be \label{E:sG-rho}
T_\phi \sG = \{w \circ \phi \mid w \in H_\Euler^k\}, \quad (T_\phi \sG)^{\perp_\rho} = \Big\{ \frac 1{\rho} \big((\nabla h) \circ \phi\big) \mid h \in H^{k+1} (\Omega, \R) \Big\},
\ee
where the orthogonality is in the $L^2(\rho dx)$ inner product. 
\end{remark} 

We shall also need the Hodge decomposition of $H^k(\Omega, \R^d)$ at each $\phi \in \tilde \sG$ with a density $\sigma \in Z_{\rho_+, \rho_-}^k$. Define 
\be \label{E:Q}
Q (\sigma, \phi) X = \big(\cP (\sigma \circ \phi^{-1}) (X\circ \phi^{-1}) \big) \circ \phi, 
\ee
where 
$\cP$ is given in Lemma \ref{hodge}. 
Here $Q$ is well-defined since $ \sigma \circ \phi^{-1}\in Z_{\rho_+, \rho_-}^k$, too.  From the definition \eqref{E:Q} of $Q$, it holds 
\be \label{E:Q-1}
Q(\sigma, \phi \circ \psi) X = \big(Q(\sigma \circ \psi^{-1}, \phi) (X \circ \psi^{-1}) \big) \circ \psi, \; \forall \phi, \psi \in \tilde \sG.  
\ee
According to \eqref{E:sG-rho} and Lemma \ref{hodge} we also have 
\be \label{E:Q-1.5} 
Q(\sigma, \phi)X \in T_\phi \sG,\; \forall \phi \in \sG. 
\ee
In the following, fixing $\phi_0 \in \tilde \sG$ we consider $Q(\sigma, \phi_0 \circ \psi)$ depending on $\sigma\in Z_{\rho_+, \rho_-}^k$ and $\psi \in \sG$. Let 
\be \label{E:cQ}
Q_{\phi_0} (\sigma, \psi) = Q(\sigma, \phi_0 \circ \psi), \quad \cQ_{\phi_0} (\sigma, w)= Q_{\phi_0} (\sigma, \Psi_{\rho_*}(w)), \quad w \in B_\delta (H_\Euler^k),
\ee
where $\cQ_{\phi_0}$ is simply $Q_{\phi_0}$ represented in the local coordinate $\psi=\Psi_{\rho_*}(w)$ for $\psi$ near $\id_\Omega$.

\begin{lemma} \label{L:Hodge-2}
Suppose $\phi_0(t) \in \tilde \sG$, $t\in [0, T]$, satisfies $D\phi_0 \in C^l ([0, T], Z^{k+r-1})$, $k>\frac d2+1$, and $l, r\ge 0$, then for any $a_+>a_->0$, 
\[
Q_{\phi_0(\cdot)} \in C_{t\in [0, T]}^l C^{r} \big(Z_{a_+, a_-}^k \times \sG, \cL(H^k(\Omega, \R^d))\big), 
\]
\[
\cQ_{\phi_0(\cdot) } \in C_{t\in [0, T]}^l  C^{r} \big(Z_{a_+, a_-}^k \times B_\delta (H_\Euler^k), \cL(H^k(\Omega, \R^d))\big). 
\] 
Moreover, there exist $K \in \mathbb{N}$ determined by $k+r+l$ and $\delta>0$ determined by $\Omega, k$, and, $a_\pm$ such that  
for any $R>0$ 
\be \label{E:Q-C^{r-1}}
\|\cQ_{\phi_0} \|_{C_t^l C^{r}((Z_{a_+, a_-}^k \cap B_R (Z^k)) \times B_\delta (H_\Euler^k), \cL(H^k(\Omega, \R^d)))} \le C |D\phi_0|_{C_t^l Z^{k+r-1}}^K. 
\ee
where $C>0$ is a constant determined by $k+r, a_\pm, R$, and $\Omega$.  
\end{lemma}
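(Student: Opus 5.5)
The plan is to rewrite $Q_{\phi_0}(\sigma,\psi)$ as an explicit operator whose dependence on $(\sigma,\psi)$ runs entirely through the coefficients of an elliptic problem posed in fixed Lagrangian labels. The derivatives in $\sigma$, $w$ and $t$ then come from differentiating an elliptic inverse, which is smooth in its coefficients.

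\textbf{Step 1: pull back to fixed labels.} Set $\phi=\phi_0\circ\psi$. By \eqref{E:Q}, $Q(\sigma,\phi)X = X - \frac{1}{\sigma}\big((\nabla h)\circ\phi\big)$, where $h$ solves
\[
-\nabla\cdot\big(\tfrac{1}{\sigma\circ\phi^{-1}}\nabla h\big) = \nabla\cdot (X\circ\phi^{-1})
\]
in $\Omega$, with the Neumann condition of \eqref{E:Poisson-BVP} if $\p\Omega\ne\emptyset$. Let $\eta=h\circ\phi$ and $A=(D\phi)^{-1}$. Since $\det D\phi\equiv1$, the Piola identity gives $(\nabla\cdot Y)\circ\phi=\nabla\cdot(\mathrm{cof}(D\phi)^{T}\,Y\circ\phi)$-type formulas, and $\mathrm{cof}(D\phi)=A^{T}$. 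In labels the problem becomes
\[
-\nabla\cdot\Big(\tfrac{1}{\sigma}AA^{T}\nabla\eta\Big) = \nabla\cdot(AX),
\qquad
Q(\sigma,\phi)X = X-\tfrac1\sigma A^{T}\nabla\eta .
\]
The boundary condition takes the same conormal form, because $\phi$ preserves $\p\Omega$. Since $\det D\phi\equiv1$, $A$ is the cofactor matrix $\mathrm{cof}(D\phi)^T$, a polynomial in $D\phi=(D\phi_0\circ\psi)\,D\psi$. So $Q$ equals $I-M_{\sigma,A}\circ\mathcal{S}(\sigma,A)\circ\nabla\cdot(A\,\cdot)$, where $\mathcal{S}$ is the solution operator of a uniformly elliptic divergence-form problem. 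Its coefficients lie in $Z^{k-1}$-type spaces, they are bounded below by $a_+^{-1}$ times the ellipticity of $AA^T$, and they are bounded above by powers of $|D\phi|_{L^\infty}$.

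\textbf{Step 2: smoothness of the coefficient map.} Show that $(\sigma,w)\mapsto A$ is $C^r$ into $Z^{k-1}(\Omega,\R^{d\times d})$ with the stated polynomial bounds. Use $D\psi = I+D(\Psi_{\rho_*}(w)-\id_\Omega)$, which is $C^\infty$ in $w$ by Proposition \ref{local-coordinate}. The delicate factor is $D\phi_0\circ\psi$: composing $D\phi_0\in Z^{k+r-1}$ with $\psi=\id+H^k$ and differentiating $j\le r$ times in $w$ produces $D^{j}(D\phi_0)\circ\psi$ applied to $H^k$ increments, with loss of $j$ derivatives. This is exactly why $k+r-1$ regularity of $D\phi_0$ is required; \eqref{E:composition-1} controls each term by $|D\phi_0|_{Z^{k+r-1}}$ times powers of $|D\psi|_{Z^{k-1}}$. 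I expect this step to be the main obstacle. On unbounded $\Omega$ the composition $f\circ\psi$ with $f\in Z^l$ is not in $H^l$, only $f\circ\psi - f\in H^{l-1}$-type bounds hold. So the Taylor remainders must be organized so that every increment carries an $H^k$ factor, for instance by writing $D\phi_0\circ\Psi(w+\delta w)-D\phi_0\circ\Psi(w)=\int_0^1 D^2\phi_0\circ(\cdots)\,\delta\psi\,ds$. The $t$-derivatives up to order $l$ commute with this argument, since $\phi_0$ enters only through $D\phi_0$ and $\psi$ is $t$-independent.

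\textbf{Step 3: conclude via the elliptic solution operator.} Show that the solution operator $\mathcal S$ is a $C^\infty$ function of its coefficients $(\sigma,A)$ in the relevant $Z^{k-1}$ topology, as a map into $\cL\big(H^{k-1}\cap(\dot H^1)^*,\dot H^{k+1}\big)$ applied to $\nabla\cdot(AX)$. The route is the one in Lemma \ref{hodge}: Lax–Milgram plus higher elliptic regularity, and then the resolvent identity $\mathcal S(a)-\mathcal S(b)=\mathcal S(a)\,\nabla\cdot((a-b)\nabla\,\cdot)\,\mathcal S(b)$. Each derivative is a product of copies of $\mathcal S$ and multiplication operators, with norms bounded by powers of the coefficient norms. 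Chaining Steps 2 and 3 by the chain rule gives the $C^l_tC^r$ regularity of $\cQ_{\phi_0}$, and of $Q_{\phi_0}$ on $\sG$ via the charts of Remark \ref{R:sG}. The bound \eqref{E:Q-C^{r-1}}, polynomial of some degree $K(k+r+l)$ in $|D\phi_0|_{C^l_tZ^{k+r-1}}$, follows once $\delta$ is chosen so that $|D\psi-I|_{L^\infty}\le 1/2$ on $B_\delta(H^k_\Euler)$. This choice of $\delta$ depends only on $\Omega,k,a_\pm$.
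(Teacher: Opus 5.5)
Your Steps 1 and 2 match the first half of the paper's proof: the label-coordinate problem for $h\circ\phi$ with coefficient $\frac1\sigma(D\phi)^{-1}((D\phi)^{-1})^T$. Step 3, however, claims more regularity than that problem has, and the argument breaks there. Since $\psi\in\id_\Omega+H^k$, you only have $D\psi\in I+H^{k-1}$. Hence $A=(D\phi)^{-1}$ and the coefficient $\frac1\sigma AA^T$ lie only in $Z^{k-1}$, the flux $AX$ lies in $H^{k-1}$, and $\nabla\cdot(AX)$ lies in $H^{k-2}$, not in $H^{k-1}\cap(\dot H^1)^*$. With such data the divergence-form problem gives only $\nabla\eta\in H^{k-1}$. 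This cannot be improved: $\nabla\eta=(D\phi)^T\big((\nabla h)\circ\phi\big)$ with $D\phi$ merely in $Z^{k-1}$, so in general $\eta\notin\dot H^{k+1}$. Your formula $Q(\sigma,\phi)X=X-\frac1\sigma A^T\nabla\eta$ therefore only shows the correction term is in $H^{k-1}$. Its true $H^k$ regularity comes from a cancellation between $A^T$ and $\nabla\eta$, which a chain rule through $\mathcal S$ cannot see. Done correctly, your Steps 2--3 prove $Q_{\phi_0}\in C^l_tC^r\big(Z^k_{a_+,a_-}\times\sG,\cL(H^{k}(\Omega,\R^d),H^{k-1}(\Omega,\R^d))\big)$. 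This is essentially the paper's intermediate statement \eqref{E:temp-4.7}, one derivative short of the lemma. The loss you flag in Step 2, from differentiating $D\phi_0\circ\psi$ in $w$, is harmless and absorbed by $Z^{k+r-1}$; the loss that matters is intrinsic to label coordinates. Working in Eulerian coordinates instead gives $H^k$ regularity, but then the dependence on $\psi$ through $\circ\phi^{\pm1}$ is not smooth (cf.\ Remark \ref{R:non-smooth}).

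The missing idea is how the paper recovers that derivative. Apply a fixed smooth vector field $\nabla_Y$, with $Y\cdot\N=0$ on $\p\Omega$, to the Eulerian problem \eqref{E:temp-3}. This gives a Neumann problem of the same type for the scalar $\nabla_Y h$, with commutator terms involving $DY$ and $\Pi$ on the right side. Pulled back by $\phi$, the unknown becomes the scalar $(Y\circ\phi)\cdot\gamma$, where $\gamma=(\nabla h)\circ\phi$. It solves a divergence-form equation with the same $Z^{k-1}$ coefficient $\frac1\sigma A(\phi)$ and a right side in $H^{k-2}$. That right side involves $\gamma$ only at the $H^{k-1}$ level, which the first step already controls smoothly. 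The same elliptic theory then places $(Y\circ\phi)\cdot\gamma$ itself in $H^k$, with $C^l_tC^r$ dependence on $(\sigma,\psi)$. The gain comes from treating a first derivative of $h$ as a potential rather than as $A^T$ times a gradient. Taking $Y=\Ve_j$ gives every component $\gamma^j$ when $\Omega=\BBT^{d_1}\times\R^{d_2}$. For bounded $\Omega$ one localizes with tangential fields near $\p\Omega$, handles mixed tangential--normal derivatives by a commutator, and recovers the normal--normal derivative from the equation. Without this step or an equivalent device, your proof does not reach $\cL(H^k(\Omega,\R^d))$.
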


\begin{proof} 
Due to Remark \ref{R:sG} and the symmetry  \eqref{E:Q-1} of $Q$, $Q_{\phi_0}$ is smooth on $[0, T]\times Z_{a_+, a_-}^k \times \sG$ iff it is smooth for $\psi \in \sG$ near $\id_\Omega$, where $Q_{\phi_0}$ and $\cQ_{\phi_0}$ are equivalent. Denote $\phi=\phi_0(t)\circ \psi$. Hence in the rest of the proof we shall work on $Q_{\phi_0(t)} (\sigma, \psi) = Q(\sigma, \phi)$. A fact which will be used repeatedly in the proof is that for $d\times d$ matrices such as $D\phi$ with determinant $1$, their inverses are their homogeneous polynomials of degree $d-1$ due to the representation using cofactors. In the following $K\in \mathbb{N}$ and $C>0$ are generic constants which may change from line to line. 

It is easier to consider $I-Q(\sigma, \phi)$ given by 
\be \label{E:Q-2}
(I-Q(\sigma, \phi))X = - \frac 1\sigma  \gamma, \ \text{ where } \  \gamma =(\gamma^1, \ldots, \gamma^d)^T \triangleq (\nabla h ) \circ \phi    \in H^k(\Omega, \R^d),   
\ee
where $ h$  solves
\be \label{E:temp-3} \begin{cases}
        -\nabla  \cdot \left(\frac{1}{\sigma \circ \phi^{-1}}\nabla h\right) = \nabla \cdot(X \circ \phi^{-1})=\tr(D(X \circ \phi^{-1})) &\text{ in } \Omega,\\
       \frac{1}{\sigma \circ \phi^{-1}}\nabla h \cdot \N = - (X \circ \phi^{-1})\cdot \N &\text{ on } \partial \Omega.
\end{cases} \ee
Here 
the boundary conditions is omitted if $\p\Omega =\emptyset$. The regularity $h \in \dot{H}^{k+1}(\Omega,\mathbb{R})$ follows from standard elliptic theory as in Lemma \ref{hodge}. Under the change of coordinates $x= \phi(y)$, we shall need the following general formulas 
\be \label{E:temp-4}
(\nabla_x f) \circ \phi = ((D\phi)^{-1})^T \nabla_y (f \circ \phi), \quad (\nabla_x \cdot W) \circ \phi=  \frac 1{\det D\phi}  \nabla_y \cdot \big( \det (D\phi)  (D\phi)^{-1} (W\circ \phi) \big). 
\ee

To analyze the smoothness of $\gamma= (\nabla h ) \circ \phi = ((D\phi)^{-1})^T\nabla \tilde{h}$ in $\phi$ (so equivalently in $\psi$), where $\tilde h = h \circ \phi$, we consider the elliptic equation satisfied by $\tilde h$:  
\be \label{E:t-h-1} \begin{cases}
    -\nabla \cdot \left(\frac{1}{\sigma}(D\phi)^{-1}((D\phi)^{-1})^T\nabla \tilde{h}\right) = \nabla \cdot \big( (D\phi)^{-1} X)
    = \tr(DX (D\phi)^{-1}) 
    & \text{ in } \Omega, \\
   \frac{1}{\sigma}(D\phi)^{-1}((D\phi)^{-1})^T\nabla \tilde{h} \cdot \N= -\frac {dS_x}{dS_y} X \cdot (\N \circ \phi) = - \N \cdot  (D\phi)^{-1} X
   & \text{ on } \partial\Omega.
\end{cases} \ee
Here we used $\det (D\phi) \equiv 1$ and 
\be \label{E:temp-4.5}
\N \circ \phi = \frac {((D\phi)^{-1})^T \N}{|((D\phi)^{-1})^T \N|}, \quad \frac {dS_x}{dS_y} = |\det D \phi| | ((D\phi)^{-1})^T \N|. 
\ee
Denote the coefficient matrix 
\begin{equation}
\label{A_sigma_u}
A(\phi)\triangleq (D\phi)^{-1}((D\phi)^{-1})^T.
\end{equation}
For any $\phi=\phi_0\circ \psi$ with $\psi \in \id_\Omega + B_{2\delta} (H^k(\Omega, \R^d)) \supset \Psi_{\rho_*} \big( B_{\delta} (H_\Euler^k )\big)$, 
we observe:
\begin{itemize}
    \label{observations}
    \item At any $x \in \bar \Omega$, as a $d\times d$ symmetric matrix, 
    \[
   2 |(D\phi_0(x))^{-1}|^2 \ge |(D\phi(x))^{-1}|^2 \ge A(\phi) (x) \ge |D\phi(x)|^{-2} \ge |D\phi_0(x)|^{-2} /2.
    \] 
    \item The mapping 
    \[
    \Big( \psi \to (D \phi)^{-1} = (\text{cof}(D \psi))^T (\text{cof}((D \phi_0) \circ \psi))^T   \Big) \in C^{r} \big(\id_\Omega + B_{2\delta} (H^k(\Omega, \R^d)), H^{k-1} (\Omega, \R^{d^2}) \big).   
    \]
    \item $A \in C^{r} \big( \id_\Omega + B_{2 \delta} (H^k(\Omega, \R^d)), H^{k-1} (\Omega, \R^{d^2})\big)$ and 
    \[
    \| A \|_{C^{r} (\id_\Omega + B_{2\delta} (H^k(\Omega, \R^d)), H^{k-1} (\Omega, \R^{d^2}))} \le C |D \phi_0|_{Z^{k+r-1}}^K.
    \]
\end{itemize}
The $H^{k-1}$ estimates of $\nabla \tilde h$ follow from the standard elliptic theory. By differentiating in $t$, $\sigma$, and $\psi$ repeatedly, using the smoothness of $\p\Omega$ and $\sigma \in Z_{a_+, a_-}^k$, we also obtain 
\be \label{E:temp-4.7}\begin{split}
&\big((t, \sigma, \psi) \to \nabla \tilde h \big) \in C_{t\in [0, T]}^l C^{r} \big(Z_{a_+, a_-}^k \times \sG, \cL(H^{k-1}(\Omega, \R^d))\big) \\
\implies & Q_{\phi_0} \in C_{t\in [0, T]}^l C^{r} \big(Z_{a_+, a_-}^k \times \sG, \cL(H^{k-1}(\Omega, \R^d))\big),
\end{split} \ee
with the $C^{r}$ norm of $\cQ_{\phi_0}$ satisfying the desired estimate \eqref{E:Q-C^{r-1}}. 

To obtain the smoothness of $Q (\sigma, \phi) \in \cL(H^{k}(\Omega, \R^d))$ with respect to $\psi$, let $Y\in C^{k+r} (\Omega, \R^d)$ such that $Y\cdot \N =0$ along $\p \Omega$. Applying $\nabla_Y= Y \cdot \nabla$ to \eqref{E:temp-3} and using the commutator $[\Ve_i, Y]= Y_{x_i}$, we obtain the equations satisfied by $Y \cdot (\gamma \circ \phi^{-1})= \nabla_Y h$ in $\Omega$, 
\be \label{E:temp-5} \begin{split}
-\nabla \cdot \left(\frac 1{\sigma \circ \phi^{-1}} \nabla (\nabla_{Y} h)\right) = & \nabla_Y \nabla \cdot (X \circ \phi^{-1}) - \sum_{i = 1}^n 
    \nabla_{Y_{x_i}}\big(\frac 1{\sigma \circ \phi^{-1}} h_{x_i}\big) \\
&+ \nabla \cdot\Big( \big( \nabla_Y \big(\frac 1{\sigma \circ \phi^{-1}} \big)\big) \nabla h - \frac 1{\sigma \circ \phi^{-1}} (DY)^T \nabla h \Big),
\end{split} \ee
and along $\p \Omega$, 
\be \label{E:temp-6} \begin{split}
\frac 1{\sigma \circ \phi^{-1}} \nabla (\nabla_{Y} h)  \cdot \N = & - \nabla_Y ( (X \circ \phi^{-1}) \cdot \N) + \big(\nabla_Y \big(\frac 1{\sigma \circ \phi^{-1}} \big)\big) \nabla h \cdot \N \\
& - \frac 1{\sigma \circ \phi^{-1}} \nabla h \cdot \big(DY(\N) - \Pi(Y) \big).  
\end{split} \ee
Here $\Pi= \Pi(x) \in \cL (\R^d, T_x \p \Omega)$ 
is an extension of the second fundamental form of $\partial \Omega$ if $\p \Omega \ne \emptyset$:
\be \label{E:2ndFF}
\Pi(x)W =\nabla_{W^\top} \N, \quad \forall x \in \p \Omega, \; W \in \R^d, 
\ee
which is smooth and used sometimes in  its corresponding symmetric bilinear form. As in the previous step, the above equations are transformed by $x= \phi(y)$ using 
\[
(\nabla_W f) \circ \phi = \nabla_{(D\phi)^{-1} W} (f \circ \phi), \quad \forall W \in \R^d. 
\]
Denote $\tilde Y= (D\phi)^{-1} (Y\circ \phi)$. From \eqref{E:temp-5}, we have in $\Omega$
\be \label{E:temp-7} \begin{split}
-\nabla \cdot \left(\frac 1\sigma A(\phi) \nabla \big((Y\circ \phi) \cdot \gamma\big)\right) = & \nabla_{\tilde Y} \tr\big(DX (D\phi)^{-1}\big) 
 - \sum_{i = 1}^n         \nabla_{(D \phi)^{-1} (Y_{x_i} \circ \phi)}\big(\frac 1\sigma \gamma^i\big) \\
&+ \nabla \cdot\Big( (D\phi)^{-1} \Big( \big( \nabla_{\tilde Y} \big(\frac 1{\sigma} \big)\big) \gamma - \frac 1{\sigma} \big((DY)^T \circ \phi \big)\gamma \Big)\Big),
\end{split} \ee
and along $\p \Omega$ using \eqref{E:temp-4.5}
\be \label{E:temp-8} \begin{split}
\frac 1{\sigma} A(\phi) \nabla \big((Y\circ \phi) \cdot \gamma\big)  \cdot \N = & - | ((D\phi)^{-1})^T \N| \Big( \nabla_{\tilde Y} ( X\cdot (\N \circ \phi)) - \frac 1{\sigma} \gamma \cdot (\Pi(Y) \circ \phi) \Big) \\ 
&+ \big(\nabla_{\tilde Y} \big(\frac 1{\sigma} \big)\big) \N \cdot (D\phi)^{-1} \gamma - \frac 1{\sigma} \gamma \cdot ((D Y) \circ \phi) ((D\phi)^{-1})^T \N.
\end{split} \ee
Again, from \eqref{E:composition-1} and the standard energy estimates, we obtain 
\[
\big((t, \sigma, \psi) \to (Y\circ \phi) \cdot \gamma \big) \in C_{t\in [0, T]}^lC^r \big(Z_{a_+, a_-}^k \times \sG, \cL(H^{k}(\Omega, \R))\big),
\]
with the $C^r$ norms depending on $r, d, k, \rho_\pm, a_\pm, R$, and $|\rho_*|_{Z^k}$. 

If $\Omega = \BBT^{d_1} \times \R^{d_2}$, simply take $Y=\Ve_1, \ldots, \Ve_d$, and the above analysis gives us the desired results. 

If $\Omega =\BBT^{d_1} \times \Omega_2$ with smooth compact $\Omega_1 \subset \R^{d_1}$, we follow the standard procedure in elliptic PDEs to obtain: a.) interior estimates; b.) boundary estimates in the tangential directions of $\p \Omega_1 \times \BBT^{d_2}$; and c.) boundary estimates in the normal directions using boundary condition combined with the elliptic equation. Roughly, due to the compactness and smoothness  of $\Omega$, there exist $\ep>0$, $m \in \mathbb N$, $P_i \in \p \Omega$, and $C^{k+r}$ vector fields $Y_{i, j}$, for $1\le i\le m$ and $1\le j \le d$, such that the following hold. 
\begin{enumerate} 
\item 
$(Y_{i, j}(x))_{j=1}^d$ form a basis of $\R^d$ at each $x\in B_{3\ep}(P_i)$; and at each $x \in B_{3\ep}(P_i)\cap \p \Omega$, $Y_{i, j}(x) \subset T_x \p\Omega$ for $1\le j \le d-1$ and $Y_{i, d}(x)= \N(x)$. 
\item $\Omega \subset \big(\cup_{i=1}^m B_{2\ep} (P_i)\big) \cup \Omega_{0}$ where $\Omega_0 = \{ x\in \Omega \mid dist(x, \p \Omega) >  \ep\}$, and  the signed distance function $dist(x, \p \Omega)$, with $dist(x,\p \Omega)>0$ for $x \in \Omega$, is $C^{k+r}$ in $\cup_{i=1}^m B_{3\ep} (P_i)$. 
\end{enumerate} 
Let $\chi(s)$ be a smooth cut-off function such that $\chi (s)=1$ for $s<1$ and $\chi(s)=0$ for $s> 2$. 
Define 
\[
\tilde Y_{i, j} (x)= \chi \big( dist(x, P_i)/\ep -1\big) Y_{i,j}(x), \quad \tilde Y_{0, j} = \chi\big(2- dist(x, \p \Omega)/\ep \big) \Ve_j.
\]
It is clear that $\gamma \in H^k(\Omega, \R^d)$ depends on $\phi$ smoothly iff $\nabla_{\tilde Y_{i, j} \circ \phi} (\tilde Y_{i, j'}  \circ \phi \cdot \gamma) \in H^{k-1} (\Omega, \R)$ depends on $\phi$ smoothly for all $i, j, j'$. For $j'\ne  d$ or $i=0$, this follows directly from the above estimates applied to $\tilde Y_{i, j'}$. For $i \ne 0$ and $j\ne d$, from 
\[
\nabla_{\tilde Y_{i, j} \circ \phi} (\tilde Y_{i, d}  \circ \phi \cdot \gamma) = \nabla_{\tilde Y_{i, d} \circ \phi} (\tilde Y_{i, j}  \circ \phi \cdot \gamma) + \big((\nabla_{\tilde Y_{i, j}} \tilde Y_{i, d} - \nabla_{\tilde Y_{i, d}} \tilde Y_{i, j}) \circ \phi \big)\cdot \gamma, 
\]
we obtain the smooth dependence of $\nabla_{\tilde Y_{i, j} \circ \phi} (\tilde Y_{i, d}  \circ \phi \cdot \gamma) \in H^{k-1} (\Omega, \R)$ on $\phi$ by the above estimates applied to $\tilde Y_{i, j}$ and the smoothness \eqref{E:temp-4.7} of $\gamma \in H^{k-1}(\Omega, \R^d)$ in $\phi$ applied to the lower order commutator term on the right side. Finally from the 
elliptic equation of $h$, the last term $\nabla_{\tilde Y_{i, d} \circ \phi} (\tilde Y_{i, d}  \circ \phi \cdot \gamma)$ can be represented by other second derivative terms (see, for example, \cite{GT01}) and its smoothness in $\phi$ follows. 
\end{proof}

\subsection{Rewriting Euler equation in Lagrangian coordinates} \label{SS:Euler-L}

Suppose $\big(v(t), \rho(t), p(t)\big)$, $t \in [0, T]$, is a solution to the Euler equation \eqref{stratified-euler} such that 
\[
|v - v_*|_{C^0 ([0, T], H_\Euler^k)} \ll 1, \quad \nabla (p(t) - p_*) \in H^k(\Omega, \R^d). 
\]
Let $u(t)$ be an associated Lagrangian map defined in \eqref{E:L-map-1}. In view of \eqref{E:L-map-1} and \eqref{E:temp-2}, one may compute 
\[
v\circ u = u_t = v_* \circ u_* \circ \psi + ((Du_*)\circ \psi) \psi_t,  
\]
\begin{align*}
&(v_t + v\cdot \nabla v) \circ u = u_{tt} \\
=&  (v_* \cdot \nabla v_*) \circ u + 2 ((Dv_*) \circ u)  ((Du_*)\circ \psi) \psi_t + ((D^2 u_*) \circ \psi) (\psi_t, \psi_t) + ((Du_*)\circ \psi) \psi_{tt}. 
\end{align*}
From \eqref{stratified-euler} and \eqref{E:p*}, we obtain 
\be \label{E:Euler-L-2} \begin{split}
((Du_*)\circ \psi) \psi_{tt} + 2 ((Dv_*) \circ u) & ((Du_*)\circ \psi) \psi_t + ((D^2 u_*) \circ \psi) (\psi_t, \psi_t)\\
& +\big( w_* + \frac 1{\rho} \nabla (p- p_*) + \big(\frac 1\rho -  \frac 1{\rho_*} \big) \nabla p_*
\big) \circ u =0.  
\end{split} \ee

Recall from \eqref{density-constant} 
\be \label{E:sigma-1}
\sigma \triangleq \rho(t) \circ u(t) \implies \sigma = \rho(0)\circ u(0), 
\ee
which is independent of $t$. 
Applying 
$I- Q(\sigma, u)$ to the above equation and using 
\[
Q(\sigma, u) \big(\big(\frac 1{\rho} \nabla (p- p_*)\big)\circ u\big) =0, \quad Q(\sigma, u) (w_* \circ u) =w_* \circ u, 
\]
the latter of which is due to assumption \eqref{E:p*}, we obtain 
\begin{align*}
- \big(\frac 1{\rho} \nabla (p- p_*)\big)\circ u =  \big( I-Q(\sigma, u) \big) \Big( &((Du_*)\circ \psi) \psi_{tt} + 2 ((Dv_*) \circ u) ((Du_*)\circ \psi) \psi_t \\
&+ ((D^2 u_*) \circ \psi) (\psi_t, \psi_t) +\big( \big(\frac 1\rho -  \frac 1{\rho_*} \big) \nabla p_* \big) \circ u \Big). 
\end{align*}
For $\psi \in \sG \cap \big( \id_\Omega + B_{{\delta}}(H_\Euler^k) \oplus B_{{\delta}}\big((H^k_\Euler)^{\perp_{\rho_*}}\big)\big)$, according to Proposition \ref{local-coordinate} we can write $\psi= \Psi_{\rho_*} (w)$ and thus 
\be \label{E:temp-9}
\psi_t = \sD \Psi_{\rho_*} (w) w_t, \quad \psi_{tt} = \sD \Psi_{\rho_*} (w) w_{tt} + \sD^2 \Psi_{\rho_*} (w)(w_t, w_t). 
\ee
According to Proposition \ref{local-coordinate}, we have $\sD \Psi_{\rho_*} (w) w_{tt} \in T_\psi \sG$. 
Since $u_* \circ \phi \in \tilde \sG$ for any $\phi \in \tilde \sG$, differentiating in $\phi$ we obtain that 
\[
\big( ((Du_*)\circ \psi) \tilde \psi \big) \circ u^{-1} \in H_\Euler^k, \; \forall \tilde \psi \in T_\psi \sG \implies (I-Q(\sigma, u)) ((Du_*)\circ \psi) \sD \Psi_{\rho_*} (w) w_{tt} =0. 
\] 
Hence we obtain the equation of the pressure from the above analysis 
\be \label{E:pressure-1} \begin{split} 
\big(\frac 1{\rho}  \nabla  (p &- p_*) \big)\circ u = 
- \big(I- Q_{u_*} (\sigma, \psi) \big) \Big( ((Du_*)\circ \psi)  \sD^2 \Psi_{\rho_*} (w)(w_t, w_t) \\
&+ 2 ((Dv_*) \circ u) ((Du_*)\circ \psi) \psi_t + ((D^2 u_*) \circ \psi) (\psi_t, \psi_t) +\big(\frac 1{\sigma} -  \frac 1{\rho_* \circ u} \big) \big( ( \nabla p_* ) \circ u\big) \Big), 
\end{split} \ee
where we recall $Q_{u_*}$ as defined in \eqref{E:cQ} and 
\[
u_* = u_*(t), \quad \psi=\Psi_{\rho_*}(w), \quad \psi_t = \sD \Psi_{\rho_*} (w) w_t, \quad u= u_*(t) \circ \psi.
\] 

Inserting \eqref{E:temp-9} into \eqref{E:Euler-L-2}, we have 
\begin{align*}
\sD \Psi_{\rho_*} & (w) w_{tt} +  \sD^2 \Psi_{\rho_*} (w)(w_t, w_t) +  ((Du_*)^{-1}\circ \psi) \Big( 2 ((Dv_*) \circ u)  ((Du_*)\circ \psi) \psi_t \\
& + ((D^2 u_*) \circ \psi) \big(\psi_t, \psi_t) 
+ w_* \circ u  + \big(\frac 1{\sigma} -  \frac 1{\rho_* \circ u} \big) \big( ( \nabla p_* ) \circ u \big) 
+ \big(\frac 1{\rho}  \nabla  (p- p_*) \big)\circ u 
\Big) =0. 
 \end{align*}
According to Proposition \ref{local-coordinate}, $\cP (\rho_*) \sD \Psi_{\rho_*} (w) =I$ at any $w \in B_\delta (H_\Euler^k)$. Hence applying $\cP (\rho_*)$ to the above equation yields
\be \label{E:Euler-L-3} 
w_{tt} + \cF(t, \sigma, w, w_t)=0, 
\ee
where 
\be \label{E:cF-1} \begin{split}
\cF(t, \sigma, w,  w_t  ) = & \cP (\rho_*) \sD^2 \Psi_{\rho_*} (w)(w_t, w_t) \\
&+ \cP (\rho_*) ((Du_*)^{-1}\circ \psi) \Big( 2 ((Dv_*) \circ u)  ((Du_*)\circ \psi) \psi_t  + ((D^2 u_*) \circ \psi) \big(\psi_t, \psi_t) \\
&+ w_* \circ u  + \big(\frac 1{\sigma} -  \frac 1{\rho_* \circ u} \big) \big( ( \nabla p_* ) \circ u \big) + \big(\frac 1{\rho}  \nabla  (p- p_*) \big)\circ u
\Big). 
\end{split}\ee
This is the form of the Euler equation which we shall work on. The correspondence between the Eulerian formulation \eqref{stratified-euler} and the Lagrangian formulation \eqref{E:Euler-L-3} is given by 
\be \label{E:transform} 
\begin{pmatrix} \rho \\ v \end{pmatrix} = \Lambda(t, \sigma, w, w_t) = \begin{pmatrix} 
\sigma \circ \Psi_{\rho_*}(w)^{-1} \circ u_*^{-1} \\ 
v_* + \big( \big( (Du_*) \circ \Psi_{\rho_*} (w) \big) \sD \Psi_{\rho_*} (w) w_t \big) \circ \Psi_{\rho_*}(w)^{-1} \circ u_*^{-1}
\end{pmatrix}. 
\ee
Clearly $\cF$ contains terms of $w_t \in H_\Euler^k$ of degree $0,1$, and $2$, so we may write it as
\[
\cF(t, \sigma, w, w_t) = \cF_0(t, \sigma, w) + \cF_1(t, \sigma, w) w_t + \cF_2(t, \sigma, w) (w_t, w_t), \quad \cF_j = \sD_{w_t}^j \cF|_{w_t=0},
\]
which are computed explicitly using \eqref{E:pressure-1}, 
\be \label{E:cF0} \begin{split}
\cF_0(t, \sigma, w) = & \cP (\rho_*) ((Du_*)^{-1}\circ \psi) \Big( w_* \circ u  +Q_{u_*} (\sigma, \psi)  \big(\frac 1{\sigma} -  \frac 1{\rho_* \circ u} \big) \big( ( \nabla p_* ) \circ u \big)\Big) .
\end{split}\ee
\be \label{E:cF1} \begin{split}
\cF_1(t, \sigma, w)w_t = & 2 \cP (\rho_*) ((Du_*)^{-1}\circ \psi) Q_{u_*} (\sigma, \psi) \big(((Dv_*) \circ u)  ((Du_*)\circ \psi) \sD \Psi_{\rho_*}(w)w_t\big).
\end{split}\ee
\be \label{E:cF2} \begin{split}
\cF_2(t, \sigma, w)(w_t, w_t) = & 
\cP (\rho_*) ((Du_*)^{-1}\circ \psi) Q_{u_*} (\sigma, \psi) \Big( ((Du_*)\circ \psi)  \sD^2 \Psi_{\rho_*} (w)(w_t, w_t) \\
&+ ((D^2 u_*) \circ \psi) \big(\psi_t, \psi_t)\Big).
\end{split}\ee

\begin{lemma} \label{L:Euler-L-1}
Assume $(\rho_*, v_*, p_*)$ satisfy assumptions \eqref{E:BG-density}, \eqref{E:BG-velocity}, and \eqref{E:p*}, then there exist $\delta >0$ determined by $\Omega, k+r, \rho_\pm, |\rho_*|_{Z^k}$, $K\in \mathbb{N}$ determined by $k+r$, and $C>0$ determined by $k+r, \rho_\pm, \Omega$, $|\rho_*|_{Z^{k+r}}$, $|D v_*|_{Z^{k+r-1}}$, $|\nabla \rho_* \cdot v_*|_{H^{k+r-1}}$, $|w_*|_{H^{k+r-1}}$, and $|\nabla p_*|_{Z^{k+r-1}}$ such that 
\[
\cF_0 \in C^{r-1} (\R \times (\rho_* + B_\delta (H^k(\Omega, \R))) \times B_\delta (H_\Euler^k),  H_\Euler^k), 
\]
\[
\cF_1 \in C^{r-1} (\R \times (\rho_* + B_\delta (H^k(\Omega, \R))) \times B_\delta (H_\Euler^k),  \cL(H_\Euler^k)), 
\]
\[
\cF_2 \in C^{r-2} \big(\R \times (\rho_* + B_\delta (H^k(\Omega, \R))) \times B_\delta (H_\Euler^k),  \cL (H_\Euler^k \otimes H_\Euler^k, H_\Euler^k)\big), 
\] 
and for any $t \in \R$, and $r_1, r_2\ge 0$, 
\[
\|\p_t^{r_1} \cF_{1} (t, \cdot, \cdot) \|_{C^{r_2}}  \le C |Du_*(t)|_{Z^{k+r-1}}^K, \quad r_1+r_2 \le r-1; 
\]
\[ 
\|\p_t^{r_1} \cF_2 (t, \cdot, \cdot) \|_{C^{r_2}} \le C |Du_*(t)|_{Z^{k+r-1}}^K, \quad r_1+r_2 \le r-2;
\]
\[
\|\cF_{0} (t, \cdot, \cdot) \|_{C^{r-1}} \le C\big( |Du_*(t)|_{Z^{k+r-1}}^K + \big| \int_0^t  |D u_*(\tau)|_{Z^{k+r-1}} d\tau\big| \big). 
\]
\end{lemma}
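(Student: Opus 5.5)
Starting from the explicit formulas \eqref{E:cF0}--\eqref{E:cF2}, I will write each $\cF_j$ as a composition of building blocks whose smoothness in $(t,\sigma,w)$ is already known or easy to obtain, and then apply the product and chain rule in $H^k$, which is an algebra since $k>d/2+1$. The building blocks are the following. First, $\cP(\rho_*)$ is a fixed bounded operator by Lemma \ref{hodge}. Second, $\Psi_{\rho_*}$ is $C^\infty$ by Proposition \ref{local-coordinate}, so $\psi=\Psi_{\rho_*}(w)$, $\sD\Psi_{\rho_*}(w)$ and $\sD^2\Psi_{\rho_*}(w)$ are smooth in $w$. Third, $Q_{u_*}(\sigma,\psi)$ is $C^r$ in $(\sigma,w)$ and $C^l$ in $t$ by Lemma \ref{L:Hodge-2}, applied with $\phi_0=u_*(t)$, with operator norm bounded by $C|Du_*(t)|^K_{Z^{k+r-1}}$. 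I will first check that $Du_*\in C^l_tZ^{k+r-1}$ with $\p_t^l Du_*$ controlled by $|Du_*|_{Z^{k+r-1}}$: differentiate the ODE \eqref{lagrangian-coordinate-map-0}, which gives $\p_t Du_*=(Dv_*\circ u_*)Du_*$, and use $Dv_*\in Z^{k+r-1}$ together with the composition estimate \eqref{E:composition-1}. Each time derivative then costs one spatial derivative of $v_*$, so $r_1+r_2\le r-1$ is the natural budget.

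The key step is the smoothness of the composition maps $\psi\mapsto f\circ u_*\circ\psi$ for the coefficient functions $f=Dv_*,\ D^2u_*(t),\ (Du_*)^{\pm1},\ \nabla p_*,\ \rho_*$. For $f\in Z^{m}$ the map $\psi\mapsto f\circ\psi\in Z^{k}$, with $\psi\in\id+H^k$, is $C^{m-k}$. Each derivative in $\psi$ produces $(Df)\circ\psi$, which loses one derivative, and the differences $f\circ\psi-f$ lie in $H^k$ because $\psi-\id\in H^k$. Since $f$ has $k+r-1$ derivatives, this yields $C^{r-1}$ dependence. The same count explains the loss for $\cF_2$: it involves $D^2u_*$, one derivative more than $Du_*$, so $\cF_2$ is only $C^{r-2}$.

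The inverse $(Du_*)^{-1}\circ\psi$ is treated as a cofactor polynomial, using $\det Du_*=1$, which yields the polynomial bounds $C|Du_*(t)|^K_{Z^{k+r-1}}$. For $\sigma$, the only dependence is through $Q$, which is $C^r$ by Lemma \ref{L:Hodge-2}, and through the factor $\frac1\sigma-\frac1{\rho_*\circ u}$. The latter lies in $H^k$ because $\sigma-\rho_*\in H^k$ and $\rho_*-\rho_*\circ u$ is controlled by $u-\id$ through $\nabla\rho_*$. I will also have to justify the terms $w_*\circ u$ and $Q(w_*\circ u)$ via \eqref{E:p*}.

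The main obstacle is $\cF_0$, and specifically the term $\rho_*-\rho_*\circ u_*(t)\circ\psi$. Here $u_*(t)-\id$ need not lie in $H^k$, so this term does not vanish at $w=0$. I will handle it by writing $\rho_*\circ u_*(t)-\rho_*=\int_0^t(v_*\cdot\nabla\rho_*)\circ u_*(\tau)\,d\tau$. The integrand lies in $H^{k+r-1}$ by \eqref{E:p*}, and its norm at each $\tau$ is bounded by \eqref{E:composition-1} in terms of $|Du_*(\tau)|_{Z^{k+r-1}}$. This produces the integral term in the bound for $\cF_0$. The same formula shows that the term is $C^{r-1}$ in $\psi$ after composing with $\psi$. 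Finally, collecting all the factors through the multilinear product structure gives the stated regularity and the polynomial bounds with a $K$ depending only on $k+r$.
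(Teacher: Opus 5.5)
Your proposal is correct and follows essentially the same route as the paper. It derives the time regularity of $Du_*$ from the ODE, and uses Lemma \ref{hodge}, Proposition \ref{local-coordinate} and Lemma \ref{L:Hodge-2} (with $\phi_0=u_*(t)$) for $\cF_1,\cF_2$. For $\cF_0$ it uses the representation $\rho_*\circ u_*(t)-\rho_*=\int_0^t(v_*\cdot\nabla\rho_*)\circ u_*(\tau)\,d\tau$ from \eqref{E:p*}, which produces the integral term. The only cosmetic difference is the order of the split. The paper writes $\sigma-\rho_*\circ u=(\sigma-\rho_*)+(\rho_*-\rho_*\circ u_*(t))+(\rho_*\circ u_*(t)-\rho_*\circ u_*(t)\circ\psi)$ and handles the last piece by a line integral in $w$. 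You instead compose the time integral with $\psi$ and treat $\rho_*\circ\psi-\rho_*$ separately, which works equally well.
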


For fixed $t$ the above $C^{r_2}$ norms are taken on $(\rho_* + B_\delta (H^k(\Omega, \R))) \times B_\delta (H_\Euler^k)$. One notices that the term $D^2u_*$ in the definition causes the $C^{r-2}$ smoothness of $\cF$. This term only appears in $\cF_2$, so $\cF_0$ and $\cF_1$ are $C^{r-1}$. 

\begin{proof} 
We choose $\delta>0$ such that Proposition \ref{local-coordinate} and Lemma \ref{L:Hodge-2} hold and $\rho_* + B_\delta (H^k(\Omega, \R)) \subset Z_{\frac {\rho_-}2, 2\rho_+}^k$. 
The Lagrangian map $u_*$ associated with the background velocity $v_*$ is defined as in \eqref{lagrangian-coordinate-map}. From the assumption $D v_*\in Z^{k+r-1}$, differentiating in $t$ and using \eqref{E:composition-1} repeatedly, we have 
\[
D u_* \in C^{r_1} (\R, Z^{k+r-r_1}), \quad |\p_t^{r_1} Du_*(t) |_{Z^{k+r-r_1}} \le C |Du_*(t)|_{Z^{k+r-1}}^K, \quad 1\le r_1 \le r. 
\]
Using Lemma \ref{L:Hodge-2} (with its $(l, r)$ replaced by $(r_1, r_2)$), Lemma \ref{hodge}, and Proposition \ref{local-coordinate}, it is straightforward to obtain the smoothness and estimates of $\cF_1$ and $\cF_2$. 

It remains to analyze $\cF_0$, particularly the last term involving $\frac 1{\sigma} -  \frac 1{\rho_* \circ u}$. One the one hand, from the assumptions $\rho_* \in Z_{\rho_, \rho_+}^{k+r}$ and $\nabla \rho_* \cdot v_* \in H^{k+r-1}$, we have  
\[
\p_t (\rho_* \circ u) = (\nabla \rho_* \cdot v_*) \circ u_* \circ \Psi(w) \implies \|\p_t^{r_1} (\rho_* \circ u)\|_{C^{r-r_1}(\R \times B_\delta (H_\Euler^k), H_\Euler^k)} \le  C |Du_*(t)|_{Z^{k+r-1}}^K,
\]
for $1\le r_1 \le r$. on the other hand,   
\begin{align*}
& \sigma  - \rho_* \circ u=  \sigma -\rho_*+ \rho_* - \rho_*\circ u_*(t) + \rho_*\circ u_*(t) - \rho_*\circ u_* (t) \circ \psi \\
=& \sigma -\rho_* - \int_0^t (\nabla \rho_* \cdot v_*) \circ u_*(\tau) d\tau - \int_0^1 \Big( \big((Du_*(t))^T (\nabla \rho_*) \circ u_* (t)\big) \circ \Psi_{\rho_*} (\tau w) 
\sD \Psi_{\rho_*} (\tau w) w d\tau. 
\end{align*}
Therefore, along with the above estimate on $\p_t^{r_1} (\rho_* \circ u)$ we obtain 
\[
\big\| \frac 1{\sigma} -  \frac 1{\rho_* \circ u}\big\|_{C^{r-1} (\R \times (\rho_* + B_\delta (H^k(\Omega, \R))) \times B_\delta (H_\Euler^k), H_\Euler^k)} \le C\big( |Du_*(t)|_{Z^{k+r-1}}^K + \big| \int_0^t  |D u_*(\tau)|_{Z^{k+r-1}} d\tau\big| \big). 
\]
Using these inequalities and \eqref{E:composition-1}, it is straightforward to derive the desired estimates on $\cF_0$. 
\end{proof}

A direct corollary of the above lemma is the local well-posedness of the stratified  Euler equation with a background flow. 

\begin{proof}[Proof of Theorem \ref{T:LWP}] 
According to \eqref{lagrangian-coordinate-map}, for any solution with $v \in C^0( [-T, T], v_* + H_\Euler^k)$, the corresponding Lagrangian map $u(t, \cdot) \in \tilde \sG$ is well-defined and $\rho(t) = \rho(0) \circ u(t)^{-1}$. Therefore solutions to the stratified Euler equation \eqref{stratified-euler} correspond to those of \eqref{E:Euler-L-3} on $w \in C^1 ([-T, T], B_\delta (H_\Euler^k))$ with $w(0) =0$, $w_t(0)= v(0) - v_*$, and $\sigma = \rho(0)$. Due to Lemma \ref{L:Euler-L-1}, \eqref{E:Euler-L-3} is an infinite dimensional ODE on $
B_\delta (H_\Euler^k) \times H_\Euler^k$ with $C^{r-2}$ nonlinearity depending on the density parameter $\sigma$. Therefore the existence, uniqueness, and smooth dependence on the initial data (as well as on the parameter $\sigma = \rho(0)$) of solutions follow immediately.  The pressure is given \eqref{E:pressure-1} and $\nabla (p - p_*) \in H^k$. The conclusion $\rho=\sigma \circ u^{-1}  \in C^0([-T, T], \rho_* + H^k(\Omega, \R))$ follows from the above analysis on $\sigma - \rho_* \circ u$. 
\end{proof} 

\begin{remark} \label{R:non-smooth}
While the Euler equation in the Lagrangian formulation \eqref{E:Euler-L-3} is an ODE on the infinite dimensional space $
B_\delta (H_\Euler^k
)  \times H_\Euler^k$, its Eulerian formulation \eqref{stratified-euler} does not represent an evolution governed by a smooth infinite dimensional vector field in a neighborhood of $(\rho_*, v_*)$ in $H^k(\Omega, \R)   \times H_\Euler^k$. This discrepancy is due to the lack of smoothness of the transformation $\Lambda(t, \sigma, \cdot)$ as defined in \eqref{E:transform}. Firstly, the variation of $\Psi_{\rho_*} (w)^{-1}$ is given by 
\be \label{E:non-smooth-1}
\sD \big( \Psi_{\rho_*} (w)^{-1} \big) \tilde w = - \big( \big( D \Psi_{\rho_*} (w) \big)^{-1}\big( \sD \Psi_{\rho_*} (w) \tilde w \big) \big) \circ \Psi_{\rho_*} (w)^{-1}.  
\ee
Secondly, since ``$\circ \Psi_{\rho_*} (w)^{-1}$'' appears in \eqref{E:transform}, let us consider the composition $\cC(f, \psi) = f\circ \psi$. The variation with respect to $\psi$ is 
\be \label{E:non-smooth-2}
\sD_\psi \cC(f, \psi) \tilde \psi = \big( (D f) \circ \psi \big) \tilde \psi.   
\ee
In both formulas, the spatial derivative $D$ causes the loss of one order of regularity. Following these calculations and the argument in the proof of Lemma \ref{L:Euler-L-1}, one may prove 
for $0\le r_1 \le k$,  
\be \label{E:Lambda-Cr}
\Lambda  \in C^{r_1} \big( \R \times (\rho_* + B_\delta (H^k(\Omega, \R))) \times B_\delta (H_\Euler^{k}) \times H_\Euler^k, (\rho_*, v_*) + H^{k-r_1}(\Omega, \R) \times  H_\Euler^{k-r_1})  \big). 
\ee
\end{remark}

\section{Local unstable manifolds: proof of Theorem \ref{main-theorem}} \label{S:InMa}

Throughout this section, we assume that $(v_0, \rho_0, p_0)$ is an equilibrium of the  Euler equation \eqref{stratified-euler} satisfying \eqref{E:steady}, \eqref{u0-L-infinity}, \eqref{E:Lyap-E}, and the exponential dichotomy (ED). We shall construct its local  unstable manifold in this section.  

Throughout this section, let $\Psi_{\rho_0}$ be the mapping given by Proposition \ref{local-coordinate} , $u_0(t)$ be the Lagrangian map associated with $v_0$ as defined in \eqref{lagrangian-coordinate-map}, and $\cF(t, \sigma, w, w_t)$ be the mapping defined as in \eqref{E:cF-1} with $(v_*, \rho_*, p_*)$ substituted by $(v_0, \rho_0, p_0)$. Here $(w, w_t) \in B_\delta (H_\Euler^k) \times H_\Euler^k$ and $\delta>0$ is given in Lemma \ref{L:Euler-L-1}. 
By standard ODE estimates there exists $C\ge 1$ depending on  $\|D^2 v_0\|_{H^{k+r-2}}, d, k+r$, and $C_*, \mu$,  $\mu_0$, given in \eqref{u0-L-infinity} and \eqref{E:Lyap-E} such that, 
\begin{equation}   \label{u0-hl}
    \left\|D^2u_0(t,\cdot)\right\|_{H^{m}} + \| D\left( Du_0(t,\cdot)\right)^{-1}\|_{H^{m}} \leq C e^{(m+2) \mu |t|}, \quad \forall t\in \R, \; m \in [0,k+r-2].
\end{equation}

\begin{remark} 
Assumption \eqref{u0-L-infinity} and inequalities \eqref{E:composition-1} and \eqref{u0-hl} yield the fact that the composition with $u_0(t)$ or $u_0(t)^{-1}$ introduces only an additional $C e^{K \mu |t|}$ in various estimates. This will be used repeatedly in the rest of the section.  
\end{remark}

Under $H^k$ perturbations to $(v_0, \rho_0)$, the Euler equation is equivalent to \eqref{E:Euler-L-3}. In the Lagrangian formulation \eqref{E:Euler-L-3}, $\rho(t) \circ u(t) = \sigma$, where $u(t)$ is any Lagrangian map associated with a solution $(v(t), \rho(t))$ (without assuming $u(0)=\id_\Omega$), is $t$-independent and thus can be treated as an external parameter. 
Since $(v_0, \rho_0, p_0)$ is a steady state, we have 
\[
w_*\triangleq v_0 \cdot \nabla v_0 + g \Ve_d + \frac 1{\rho_0} \nabla p_0=0, \quad \rho_0 \circ u_0(t) = \rho_0, \;\; \forall t\in \R, 
\]
and thus \eqref{E:cF-1}--\eqref{E:cF2} yield 
\be \label{E:relabeling-2}
\cF(t, \rho_0 \circ \Psi_{\rho_0} (w), w, 0)=0, \; \forall w\in B_\delta (H_\Euler^k).   
\ee
Hence $w(t) =w_0$ is a steady solution with the density parameter $\sigma = \rho_0 \circ \Psi_{\rho_0} (w_0)$, all of which correspond to relabelings of $(v_0, \rho_0, p_0)$. 


We are concerned with solutions $(v(t), \rho(t))$ to the stratified Euler equation \eqref{stratified-euler} on the unstable manifold of $(v_0, \rho_0, p_0)$ characterized by the exponential decay \eqref{E:UnstableS-1} as $ t\to -\infty$. In the Lagrangian formulation, we have the following lemma where $\mu$ and $\lambda$ are given in \eqref{u0-L-infinity}, \eqref{E:Lyap-E}, and \eqref{E:UnstableS-1}, respectively.

\begin{lemma} \label{L:UnstableS-L}
There exists $K \in \mathbb{N}$ determined by $k$ such that, if $\lambda > K\mu$, then there exists $C>0$ depending on  $k, \Omega, \rho_\pm, C_*, \mu$,  $\mu_0$, $|\rho_0|_{Z^k}, |D v_0|_{Z^{k}}$, and $\lambda-K \mu$ such that for any solution $(v(t), \rho(t))$, $t\le 0$, to \eqref{stratified-euler} satisfying \eqref{E:UnstableS-1} with $C \ep < \delta$,  
there exists a unique solution $w(t)$, $t \le 0$, to \eqref{E:Euler-L-3} with $\sigma=\rho_0$ such that 
\be \label{E:U-solu-1} 
\Lambda\big(t, \rho_0, w(t), w_t(t)\big) = \big(\rho(t), v(t) \big)^T, \;\; \|w(t)\|_{H^k}+ \|w_t(t)\|_{H^k}  \le C \ep  e^{(\lambda - K\mu)t}, \;\; \forall t\le 0. 
\ee
\end{lemma}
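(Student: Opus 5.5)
The plan is to build the Lagrangian map by normalizing it at $t=-\infty$ rather than at $t=0$. Then $\sigma=\rho_0$ follows from the decay of $\rho(t)-\rho_0$, and $w$ is read off from the local chart of Proposition \ref{local-coordinate}.

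\textbf{Step 1: construct $\psi$ by a limit of finite-time normalizations.} For $s\le t\le 0$ let $u^{(s)}(t)$ be the Lagrangian map of $v$ with $u^{(s)}(s)=u_0(s)$. Set $\psi^{(s)}(t)=u_0(t)^{-1}\circ u^{(s)}(t)$, so that $\psi^{(s)}(s)=\id_\Omega$. By \eqref{E:temp-2},
\[
\psi^{(s)}_t=\big((Du_0)\circ\psi^{(s)}\big)^{-1}\big((v-v_0)\circ u_0\circ\psi^{(s)}\big).
\]
Using \eqref{E:composition-1}, \eqref{u0-L-infinity} and \eqref{u0-hl}, composition with $u_0(t)^{\pm1}$ costs only $Ce^{K\mu|t|}$. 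A bootstrap argument with $\|\psi^{(s)}-\id_\Omega\|_{H^k}$ small therefore gives
\[
\|\psi^{(s)}(t)-\id_\Omega\|_{H^k}+\|\psi^{(s)}_t(t)\|_{H^k}\le C\ep e^{(\lambda-K\mu)t},
\]
uniformly in $s$. The integral $\int_{-\infty}^t$ converges precisely because $\lambda>K\mu$.

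Next I show that $\{\psi^{(s)}\}$ is Cauchy as $s\to-\infty$. The difference $\psi^{(s)}-\psi^{(s')}$ satisfies a linear equation whose coefficients involve $D(v-v_0)$, so there is a loss of one derivative. I would therefore run the Cauchy estimate in $H^{k-1}$, where the coefficient is controlled by the $H^k$ bound on $v-v_0$, and obtain
\[
\|\psi^{(s)}(t)-\psi^{(s')}(t)\|_{H^{k-1}}\lesssim \ep e^{(\lambda-K\mu)\max(s,s')}e^{K\mu|t|}.
\]
Interpolating with the uniform $H^k$ bound and using weak compactness, the limit $\psi(t)$ exists, satisfies the same $H^k$ bounds, and solves the same ODE. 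Since every $\psi^{(s)}(t)$ lies in $\sG$ and the limit is uniform on compact sets, $\psi(t)\in\sG$. I expect this derivative-loss step to be the main technical obstacle; the fallback is a direct fixed point for $\psi$ in a weighted $H^{k-1}$ space combined with a separate $H^k$ a priori bound.

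\textbf{Step 2: identify the density parameter and the chart variable.} Let $u=u_0\circ\psi$. By \eqref{density-constant}, $\sigma=\rho(t)\circ u(t)$ is independent of $t$. Write
\[
\sigma-\rho_0=\big(\rho(t)-\rho_0\big)\circ u_0(t)\circ\psi(t)+\big(\rho_0\circ\psi(t)-\rho_0\big),
\]
using $\rho_0\circ u_0(t)=\rho_0$. The first term is $O(\ep e^{(\lambda-K\mu)t})$ and the second is $O(\|\psi(t)-\id_\Omega\|)$, so letting $t\to-\infty$ gives $\sigma=\rho_0$.

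Next, $\psi(t)-\id_\Omega$ is small in $H^k$, and $\cP(\rho_0)$ is bounded by Lemma \ref{hodge}. Hence $\psi(t)-\id_\Omega$ lies in $B_\delta(H^k_\Euler)\oplus B_\delta\big((H^k_\Euler)^{\perp_{\rho_0}}\big)$, and part (3) of Proposition \ref{local-coordinate} gives $\psi(t)=\Psi_{\rho_0}(w(t))$ with $w=\cP(\rho_0)(\psi-\id_\Omega)$ and $w_t=\cP(\rho_0)\psi_t$. This yields the bound in \eqref{E:U-solu-1} once $C\ep<\delta$. The derivation of Subsection \ref{SS:Euler-L}, with $(v_*,\rho_*,p_*)=(v_0,\rho_0,p_0)$, then shows that $w$ solves \eqref{E:Euler-L-3} with $\sigma=\rho_0$. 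Finally, \eqref{E:transform} gives $\Lambda(t,\rho_0,w,w_t)=(\rho,v)$ by construction.

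\textbf{Step 3: uniqueness.} Suppose $\tilde w$ is another such solution and set $\tilde u=u_0\circ\Psi_{\rho_0}(\tilde w)$. Since $\Lambda$ returns the same $v$, both $u$ and $\tilde u$ are Lagrangian maps of $v$, so $\tilde u(t)=u(t)\circ\phi$ for a time-independent $\phi\in\tilde\sG$. Then
\[
\phi=\Psi_{\rho_0}(w(t))^{-1}\circ u_0(t)^{-1}\circ u_0(t)\circ\Psi_{\rho_0}(\tilde w(t))=\Psi_{\rho_0}(w(t))^{-1}\circ\Psi_{\rho_0}(\tilde w(t)),
\]
and both factors tend to $\id_\Omega$ as $t\to-\infty$ by the decay bounds. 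Hence $\phi=\id_\Omega$ and $\tilde w=w$.
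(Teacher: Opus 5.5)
Your proposal is correct in outline. Steps 2 and 3 are essentially the paper's argument. The paper also gets $\sigma=\rho_0$ by letting $t\to-\infty$ in $\rho(t)\circ u(t)$ and using $\rho_0\circ u_0(t)=\rho_0$. It proves uniqueness through the time-independent relabeling $\Psi_{\rho_0}(w(0))^{-1}\circ\Psi_{\rho_0}(\bar w(0))$, as you do.

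Step 1 is where you take a different and heavier route. The paper normalizes only once, at $t=0$. It writes $\tilde u(t)=u_0(t)\circ\Psi_{\rho_0}(\tilde w(t))$ with $\tilde w(0)=0$ and gets $\|\tilde w_t(t)\|_{H^k}\le Ce^{-K\mu t}\|v(t)-v_0\|_{H^k}\le C\ep e^{(\lambda-K\mu)t}$. This bound is integrable on $(-\infty,0]$, so $\tilde w(t)$ converges strongly in $H^k$, and $\phi=\lim_{t\to-\infty}\Psi_{\rho_0}(\tilde w(t))\in\sG$ exists. The paper then renormalizes by the single fixed relabeling $\phi^{-1}$, setting $\Psi_{\rho_0}(w(t))=\Psi_{\rho_0}(\tilde w(t))\circ\phi^{-1}$. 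The decay of $w$ follows from $\|(\Psi_{\rho_0}(\tilde w(t))-\phi)\circ\phi^{-1}\|_{H^k}\le C\|\Psi_{\rho_0}(\tilde w(t))-\phi\|_{H^k}$, by \eqref{E:composition-1}, with no loss of derivatives.

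Your family is the same object in disguise. All $u^{(s)}$ are Lagrangian maps of the same $v$, so $\psi^{(s)}(t)=\psi^{(0)}(t)\circ\psi^{(0)}(s)^{-1}$, and your limit is exactly $\psi^{(0)}(t)\circ\phi^{-1}$. The derivative loss you flag as the main obstacle is self-inflicted. It appears only because you compare $\psi^{(s)}(t)$ and $\psi^{(s')}(t)$ at each fixed $t$, which perturbs the inner map of a composition. The paper instead takes the limit once, in the chart coordinate $\tilde w$, and then composes on the right with a fixed map, which is a bounded linear operation on $H^k$.

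Your $H^{k-1}$-Cauchy, interpolation and weak-compactness scheme can be completed, but as written it leaves two points open.
\begin{itemize}
\item Weak compactness gives $\psi(t)\in H^k$ with the right bounds only pointwise in $t$. It does not give $\psi-\id_\Omega\in C^1((-\infty,0],H^k)$, which you need for $w=\cP(\rho_0)(\psi-\id_\Omega)$ to be a solution of \eqref{E:Euler-L-3} in the $H^k$ setting.
\item Showing $\psi(t)\in\sG$, in particular $\det D\psi\equiv 1$, needs $C^1$ convergence, not just locally uniform convergence. You can get this by interpolating into $H^{k'}$ for some $k'\in(d/2+1,k)$.
\end{itemize}
The natural fix for the first point is to note that $u_0\circ\psi$ is a Lagrangian map of $v$. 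Hence $\psi(t)=\psi^{(0)}(t)\circ\psi(0)$, which is continuous into $H^k$ by \eqref{E:composition-1}. Once you invoke this relabeling identity, though, the limiting procedure collapses to the paper's short argument. That argument gives strong $H^k$ convergence, continuity in time, and $\phi\in\sG$ (as $\Psi_{\rho_0}$ of a limit in the chart) for free.
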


\begin{proof} 
Let $\tilde u(t) = u_0(t) \circ \Psi_{\rho_0}( \tilde w(t))$, $t\le 0$, be a Lagrangian map associated with $v(t)$ defined as in \eqref{E:L-map-1}. From \eqref{E:transform}, \eqref{u0-L-infinity}, \eqref{E:composition-1}, \eqref{u0-hl}, and Proposition \ref{local-coordinate} we have   
\be \label{E:temp-10}
\|\tilde w_t\|_{H^k(\Omega, \R^d)} \le C e^{- K\mu t} \|v(t)-v_0\|_{H^k(\Omega, \R^d)}, \quad \forall t\le 0, 
\ee
for some $K$ and $C$ as described in the lemma. Hence we obtain $\lim_{t\to -\infty} \tilde w(t)$ exists and belongs to $B_{C\ep} (H_\Euler^k)$ which implies $\phi = \lim_{t\to -\infty} \Psi_{\rho_0}( \tilde w(t))$ also exists. From \eqref{E:temp-10} and \eqref{E:composition-1} it holds,  
\[
\|\Psi_{\rho_0}( \tilde w(t)) \circ \phi^{-1} -id_\Omega\|_{H_\Euler^k} =\big\| \big(\Psi_{\rho_0}( \tilde w(t)) - \phi\big) \circ \phi^{-1} \|_{H_\Euler^k} \le C \ep. 
\] 
According to Proposition \ref{local-coordinate}, there exists $w(t) \in B_{C\ep} (H_\Euler^k)$ such that 
\[
\Psi_{\rho_0}( w(t)) = \Psi_{\rho_0}( \tilde w(t)) \circ \phi^{-1} \implies w_t = \sD \Psi_{\rho_0}( w)^{-1} \big ( (\sD \Psi_{\rho_0}( \tilde w) \tilde w_t) \circ \phi^{-1} \big), \; w(-\infty) =0. 
\] 
The desired exponential decay estimates on $w(t)$ and $w_t(t)$ follow directly from \eqref{E:temp-10}. 
Let 
\[
u(t) = \tilde u(t) \circ \phi^{-1} = u_0(t) \circ \Psi_{\rho_0}( \tilde w(t)) \circ \phi^{-1} = u_0(t) \circ \Psi_{\rho_0}(w(t)). 
\]
Clearly $u(t)$ is also a Lagrangian map associated with $v$ and thus $w(t)$ solves \eqref{E:Euler-L-3} with 
\[
\sigma = \rho(t) \circ u(t) =\lim_{t\to -\infty} \rho(t) \circ u(t)= \lim_{t\to -\infty} \big( (\rho(t)- \rho_0) \circ u(t) + \rho_0 \circ \Psi_{\rho_0}(w(t)) \big), 
\]
where $\rho_0 \circ u_0(t) = \rho_0$ was also used. From \eqref{E:UnstableS-1} and the decay of $w(t)$ we obtain $\sigma =\rho_0$.  

To prove the uniqueness, assume that $\bar w(t)$ is another solution to \eqref{E:Euler-L-3} with $\sigma=\rho_0$ satisfying \eqref{E:U-solu-1}. Let 
\[
\psi = \Psi_{\rho_0} (w(0))^{-1} \circ \Psi_{\rho_0} (\bar w(0)). 
\]
Since $u_0(t) \circ \Psi_{\rho_0} (\bar w(t))$ is also a Lagrangian map of $v(t)$, \eqref{E:L-map-1} and the definition of $\psi$  yield 
\[
u_0(t) \circ \Psi_{\rho_0} (\bar w(t)) = u_0(t) \circ \Psi_{\rho_0} ( w(t)) \circ \psi \implies \Psi_{\rho_0} (\bar w(t)) = \Psi_{\rho_0} ( w(t)) \circ \psi, \quad \forall t\le 0. 
\]
Letting $t \to -\infty$, \eqref{E:U-solu-1} implies $\psi=\id_\Omega$, which finishes the proof. 
\end{proof}

From this lemma, in order to obtain exponentially decaying solutions $(v(t), \rho(t))$ to \eqref{stratified-euler},  it suffices to work on solutions $w(t)$ to \eqref{E:Euler-L-3} 
satisfying, for some $\lambda \in (\lambda_{cs}, \lambda_u)$, 
\be \label{E:U-solu-2} 
\sup_{t\le 0} e^{-\lambda t} \big(\|w(t)\|_{H^k} + \|w_t(t)\|_{H^k} \big) < \infty, \; \text{ with parameter } \sigma =\rho_0.
\ee

In Subsection \ref{SS:ED-L} we first obtain the linear exponential dichotomy in the Lagrangian formulation. 
The local invariant unstable manifolds will be constructed in Lagrangian coordinates in Subsection \ref{SS:InMa-L} by the Lyapunov-Perron method, which will be transformed back into the Eulerian formulation in Subsection \ref{SS:InMa-E}.

\subsection{Linearization in Lagrangian formulation} \label{SS:ED-L}

We rewrite (\ref{E:Euler-L-3}) as a first order system and expand $\cF(t, \rho_0, w, w_t)$ at $w=w_t=0$:
\begin{equation}
    \label{stratified-euler-taylor}
         z_t = A(t) z + \sF(t, z), \quad z = (z_1,z_2)^T \in B_{\delta_0}(H^k_\Euler) \times H^k_\Euler,
\end{equation}
where 
$$
A(t) = \begin{pmatrix}
    0 & I \\
    -\sD_{z_1}\cF_0(t,\rho_0, 0) & -\cF_1(t, \rho_0, 0)
\end{pmatrix}, \; \text{ and }\; \mathscr F (t, z) = \begin{pmatrix}
    0 \\
    -\tilde{\cF}(t,z_1,z_2)
\end{pmatrix}, 
$$
$$
\tilde{\cF} (t, z)=\cF_0(t,\rho_0, z_1)-\sD_w\cF_0(t, \rho_0, 0)z_1+ \left(\cF_1(t,\rho_0, z_1) - \cF_1(t, \rho_0, 0)\right) z_2+ \cF_2(t, \rho_0, z_1) (z_2, z_2).
$$
The linearization of (\ref{E:Euler-L-3}) at $(0,0)$ takes the  form:
\begin{equation}
    \label{z-linearized-eqn}
         z_t = A(t) z
        \iff
        w_{tt} + \sD_w \cF_0(t, \rho_0, 0)w + \cF_1(t, \rho_0, 0)w_t= 0.
\end{equation}
Let 
\[
T(t,t_0) = \big( T_1(t,t_0) , T_2(t,t_0) \big)^T \in \cL \big( (H^k_\Euler)^2 \big)
\]
denote the solution map of (\ref{z-linearized-eqn}) with initial time $t_0$ and terminal time $t$. Namely $T(t, t_0) z(t_0)$ is the solution to (\ref{z-linearized-eqn}). Recall the linearized operator of the Euler equation at $(v_0, \rho_0, p_0)$ is denoted by $L$ as given in (\ref{linearization}). Let $\pi_v$ and $\pi_\rho$ denote the projection operators to the $v$ and $\rho$ components of solutions to \eqref{stratified-euler}, respectively. We also recall the splitting $H^k_\Euler\times H^k(\Omega,\mathbb{R}) = X_u \oplus X_{cs}$ in assumption (ED) with $L_{u, cs} \triangleq L|_{X_{u,cs}}$ and let $\bP_{u, cs}$ denote the associated projections. 

To establish the relationship between the linearized solutions (denoted by $\tilde v$, $\tilde w$, {\it etc.}) in Eulerian and Lagrangian formulations, we start with linearizing \eqref{E:transform} at $w(t)\equiv 0$ to derive 
\begin{equation}    \label{E:Lambda0}
\begin{pmatrix} \tilde \rho \\ \tilde v \end{pmatrix} = \Lambda_0(t)\begin{pmatrix} \tilde w \\ \tilde w_t \end{pmatrix}, \; \text{ where } \; \Lambda_0(t)z \triangleq  \sD \Lambda(t, \rho_0, 0, 0)z  = \begin{pmatrix}  - \nabla \rho_0 \cdot \big ( Du_0(t) z_1 \big) \circ u_0(t)^{-1} \\ \big(D u_0(t) z_2 \big) \circ u_0(t)^{-1} \end{pmatrix}.
    \end{equation}
Clearly $\Lambda_0(t) \in \cL\big( (H_\Euler^k)^2,  H^k(\Omega,\mathbb{R}) \times H^k_\Euler \big)$ and it is strongly continuous in $t$.     
By the equivalence of the Euler equation in the Eulerian and Lagrangian formulations,  we have 
\be \label{E:conjugacy}
\Lambda_0(t) T(t, t_0)  z = e^{(t-t_0)L} \Lambda_0 (t_0) z. 
\ee
While $\Lambda_0(t)$ is not invertible, one can still recover $\tilde w_t$ from $\tilde v$. 
Consequently we obtain a formula of $T(t, t_0)$ as 
\be \label{E:T}
T_2(t, t_0) z  = (D u_0(t))^{-1} \big(\pi_v e^{(t-t_0)L} \Lambda_0(t_0) z 
\big) \circ u_0(t), \quad 
T_1(t, t_0) z  = z_1 + \int_{t_0}^t T_2 (\tau, t_0)z d\tau. 
\ee
Define the $t$-dependent operators 
\be \label{E:U-1} 
U(t) \begin{pmatrix} \bar \rho \\ \bar v  \end{pmatrix}  
=  \begin{pmatrix}
       \int_{-\infty}^t (Du_0(s))^{-1}\big(\pi_v e^{sL_u} (\bar \rho, \bar v)^T \big) \circ u_0(s) ds 
       \\
        (Du_0(t))^{-1} (\pi_v e^{tL_u} (\bar \rho, \bar v)^T ) \circ u_0(t) 
    \end{pmatrix}, \quad \begin{pmatrix} \bar \rho \\ \bar v  \end{pmatrix} \in X_u, 
\ee
\be \label{E:bP_u}
\bP_u(t) = U(t) e^{-tL_u} \bP_u \Lambda_0 (t). 
\ee
To prove that $\bP_u(t)$ gives the {\it linear exponential dichotomy in Lagrangian formulation}, we start with the following auxiliary lemma. 

\begin{lemma} \label{L:X_u-rho} 
There exists $K \in \mathbb{N}$ determined by $k$ such that, if $\lambda_u > K\mu$, then for any $(\bar \rho, \bar v) \in X_u$, let $(\tilde \rho(t), \tilde v(t))^T = e^{tL_u}(\bar \rho, \bar v)^T$, then   it holds
\[
\tilde \rho (t)  = - \nabla \rho_0 \cdot \int_{-\infty}^t (Du_0(s-t))^{-1}\big( \tilde v(s) \circ u_0(s-t) \big) ds. 
\]
\end{lemma}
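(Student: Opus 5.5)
The plan is to exploit the first component of the linearized equation \eqref{linearization}, namely $\tilde \rho_t + v_0\cdot\nabla\tilde\rho = -\tilde v\cdot \nabla\rho_0$, and integrate it along the characteristics of $v_0$ from $-\infty$. Since $e^{tL_u}$ is a group on $X_u$ and $(\bar\rho,\bar v)\in X_u$, the solution $(\tilde\rho(t),\tilde v(t))$ is defined for all $t\in\R$. By (ED), for $t\le 0$ it satisfies $\|(\tilde\rho(t),\tilde v(t))\|_{H^k}\le Me^{\lambda_u t}\|(\bar\rho,\bar v)\|_{H^k}$. For $t>0$ we only use strong continuity, and in fact it suffices to work at a fixed $t$ with $s\in(-\infty,t]$.

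\textbf{Step 1: Duhamel along characteristics.} Pull $\tilde\rho$ back along the steady Lagrangian flow by setting $f(s)=\tilde\rho(s)\circ u_0(s-t)$. Using \eqref{lagrangian-coordinate-map-0} and the fact that $u_0$ is a group in time ($u_0(s)\circ u_0(s')=u_0(s+s')$ for the autonomous $v_0$), we get
\[
\partial_s f = (\tilde\rho_s + v_0\cdot\nabla\tilde\rho)\circ u_0(s-t) = -(\tilde v(s)\cdot\nabla\rho_0)\circ u_0(s-t).
\]
Integrating from $s=T$ to $s=t$ gives $\tilde\rho(t)=\tilde\rho(T)\circ u_0(T-t)-\int_T^t(\tilde v(s)\cdot\nabla\rho_0)\circ u_0(s-t)\,ds$. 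This identity should first be justified for $\bar\rho,\bar v$ in $\dom(L)$ and then extended by density and continuity. Alternatively, since $L$ generates a group on $H^{k-1}$ as well, the equation holds in $C^0H^{k-1}$ with $\tilde\rho\in C^1H^{k-1}$, which is enough for the chain rule because $k-1>d/2$.

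\textbf{Step 2: Let $T\to-\infty$.} Write $\tau=T-t\to-\infty$. By \eqref{E:composition-1}, \eqref{u0-L-infinity} and \eqref{u0-hl}, composition with $u_0(\tau)$ costs at most $Ce^{K\mu|\tau|}$ in $H^k$ (or even just in $L^2$, where it is an isometry). Hence $\|\tilde\rho(T)\circ u_0(T-t)\|\le Ce^{K\mu|T-t|}Me^{\lambda_u T}\to0$ once $\lambda_u>K\mu$. The integrand has $H^k$ norm at most $C|\nabla\rho_0|_{Z^{k}}e^{K\mu(t-s)}e^{\lambda_u s}$, which is integrable on $(-\infty,t]$ under the same condition, so the integral converges absolutely. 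This is the only place the hypothesis $\lambda_u>K\mu$ enters, and choosing $K$ large from the composition estimate suffices.

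\textbf{Step 3: Rewrite the integrand.} Since $\rho_0$ is steady, $\rho_0\circ u_0(\tau)=\rho_0$ for all $\tau$. Differentiating gives $(Du_0(\tau))^T(\nabla\rho_0)\circ u_0(\tau)=\nabla\rho_0$, that is, $(\nabla\rho_0)\circ u_0(\tau)=(Du_0(\tau))^{-T}\nabla\rho_0$. Therefore
\[
(\tilde v(s)\cdot\nabla\rho_0)\circ u_0(s-t)=\nabla\rho_0\cdot (Du_0(s-t))^{-1}\big(\tilde v(s)\circ u_0(s-t)\big),
\]
and pulling the $s$-independent factor $\nabla\rho_0$ out of the integral yields the claimed formula. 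The main technical point, though routine, is the justification in Step 1 of the pointwise/strong derivative of $f$ for data merely in $H^k$. I would handle this by approximating with $\dom(L)$ data, using that the target formula depends continuously on $(\bar\rho,\bar v)$ by the bounds of Step 2.
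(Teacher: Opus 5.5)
Your proposal is correct and follows the paper's proof essentially verbatim. Both integrate $\partial_s(\tilde\rho\circ u_0)=-(\tilde v\cdot\nabla\rho_0)\circ u_0$ from $-\infty$ using the (ED) decay with $\lambda_u>K\mu$, then use $\rho_0\circ u_0(\tau)=\rho_0$ to write $(\nabla\rho_0)\circ u_0(\tau)=(Du_0(\tau))^{-T}\nabla\rho_0$. Your justification by approximating with $\dom(L)$ data (or, more directly, the Duhamel formula for the bounded perturbation $L_1$ of the transport group $e^{tL_0}$) is sound and more careful than the paper's. However, your alternative claim that the equation holds in $C^0H^{k-1}$ can fail on unbounded $\Omega$: there $v_0$ may grow linearly, so $v_0\cdot\nabla\tilde\rho\notin H^{k-1}$ in general.
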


\begin{proof} 
As a solution in $X_u$ to the Euler equation linearized at $(\rho_0, v_0, p_0)$, $(\tilde \rho(t), \tilde v(t))$ satisfies 
\be \label{E:temp-11} \begin{split} 
&\tilde \rho_t + v_0 \cdot \nabla \tilde \rho + \tilde v \cdot \nabla \rho_0=0 \Longleftrightarrow \p_t (\tilde \rho \circ u_0) = -  (\tilde v \cdot \nabla \rho_0) \circ u_0, \\
& \|\tilde \rho(t)\|_{H^k} + \|\tilde v(t) \|_{H^k} \le C e^{\lambda_u t}, \; t\le 0, 
\end{split}  \ee
which implies 
\[
\tilde \rho(t)  \circ u_0(t) = - \int_{-\infty}^t \big(\tilde v(s) \cdot \nabla \rho_0 \big) \circ u_0(s) ds,
\]
if $\lambda_u > k \mu$. Since $(\rho_0, v_0, p_0)$ is steady, we have  
\[
\rho_0 \circ u_0 (s-t)  =\rho_0 \implies (\nabla \rho_0  \circ u_0 (s-t) ) \cdot D u_0 (s-t ) X = X \cdot \nabla \rho_0, \; \forall X \in \R^d.
\]
The lemma follows simply from applying this identity to the above integrand where $X$ is substituted by $(Du_0(s-t))^{-1}\big( \tilde v(s) \circ u_0(s-t) \big)$. 
\end{proof}  

Next we give some basic properties of $U(t)$. 

\begin{lemma} \label{L:U}
There exists $K \in \mathbb{N}$ determined by $k$ such that, if $\lambda_u > K\mu$, then there exists 
$C>0$ depending on $k, \Omega, \rho_\pm, C_*, \mu$,  $\mu_0$, $|\rho_0|_{Z^k}, |D v_0|_{Z^{k}}$,  $\lambda_u  -K \mu$, such that
\[
\|U(t)\|_{\cL( X_u, (H^k_\Euler)^2)} \le C e^{(\lambda_u -K\mu)t}, \quad  \Lambda_0(t) U(t) = e^{tL_u}, \quad \forall t\le 0. 
\]
\end{lemma}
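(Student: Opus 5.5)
The plan has two parts: first the norm estimate, then the identity $\Lambda_0(t)U(t)=e^{tL_u}$.

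\textbf{Norm estimate.} Fix $(\bar\rho,\bar v)\in X_u$ and set $(\tilde\rho(t),\tilde v(t))^T=e^{tL_u}(\bar\rho,\bar v)^T$. By (ED),
\[
\|\tilde\rho(t)\|_{H^k}+\|\tilde v(t)\|_{H^k}\le Me^{\lambda_u t}\|(\bar\rho,\bar v)\|_{H^k}, \qquad t\le 0 .
\]
The second component of $U(t)$ is $(Du_0(t))^{-1}(\tilde v(t)\circ u_0(t))$. To bound it in $H^k$ I will use the following.
\begin{itemize}
\item The composition inequality \eqref{E:composition-1} with $f_2=u_0(t)$, where $\det Du_0=1$ and $|Du_0(t)|_{Z^{k-1}}\le Ce^{K\mu|t|}$ by \eqref{u0-L-infinity} and \eqref{u0-hl}.
\item The cofactor representation of $(Du_0)^{-1}$ together with \eqref{u0-hl}, so that multiplication by $(Du_0(t))^{-1}$ in $H^k$ (an algebra for $k>d/2+1$) also costs at most $Ce^{K\mu|t|}$.
\end{itemize}
This gives the bound $Ce^{(\lambda_u-K\mu)t}$ for $t\le0$. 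For the first component, the same bound applied to the integrand and integrated over $s\in(-\infty,t]$ gives
\[
\int_{-\infty}^t Ce^{(\lambda_u-K\mu)s}\,ds=\frac{C}{\lambda_u-K\mu}\,e^{(\lambda_u-K\mu)t},
\]
which converges once $\lambda_u>K\mu$. This is where the constant depends on $\lambda_u-K\mu$.

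It remains to check that both components of $U(t)$ lie in $H^k_\Euler$. The map $X\mapsto (Du_0)^{-1}(X\circ u_0)$ is the pullback of vector fields by the volume-preserving diffeomorphism $u_0(t)$, which preserves $\partial\Omega$. It therefore maps divergence-free fields tangent to the boundary to divergence-free fields tangent to the boundary; this uses \eqref{E:temp-4} with $\det Du_0=1$. The first component inherits this property through the integral.

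\textbf{Identity $\Lambda_0(t)U(t)=e^{tL_u}$.} This is checked component by component from \eqref{E:Lambda0}.
\begin{itemize}
\item Velocity component: $\bigl(Du_0(t)\,U_2(t)(\bar\rho,\bar v)\bigr)\circ u_0(t)^{-1}=\tilde v(t)$ directly.
\item Density component: it equals
\[
-\nabla\rho_0\cdot\Bigl(Du_0(t)\int_{-\infty}^t (Du_0(s))^{-1}\bigl(\tilde v(s)\circ u_0(s)\bigr)\,ds\Bigr)\circ u_0(t)^{-1}.
\]
Use the group property $u_0(s)=u_0(s-t)\circ u_0(t)$ and the chain rule $Du_0(s)=\bigl(Du_0(s-t)\circ u_0(t)\bigr)Du_0(t)$. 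After composing with $u_0(t)^{-1}$, the integrand becomes $(Du_0(s-t))^{-1}\bigl(\tilde v(s)\circ u_0(s-t)\bigr)$.
\end{itemize}
Lemma \ref{L:X_u-rho} then identifies the density component with $\tilde\rho(t)$. Interchanging composition and the integral is justified by the absolute convergence established in the norm estimate. I will also enlarge $K$ if needed so that Lemma \ref{L:X_u-rho} applies.

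The main obstacle is the bookkeeping of the $e^{K\mu|t|}$ losses from $Du_0$, $(Du_0)^{-1}$ and composition in high Sobolev norms. These are routine given \eqref{E:composition-1} and \eqref{u0-hl}, and the identity itself is an exact algebraic computation.
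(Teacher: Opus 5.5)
Your proposal is correct and follows the paper's proof. The estimate comes from \eqref{E:composition-1}, \eqref{u0-L-infinity}, \eqref{u0-hl}, (ED), and integration over $(-\infty,t]$. The identity comes from the flow property $u_0(s)=u_0(s-t)\circ u_0(t)$, which turns the density component into the integral of Lemma \ref{L:X_u-rho}. Your explicit check that $U(t)$ maps into $(H^k_\Euler)^2$ is a detail the paper leaves implicit. One small correction: when $\Omega$ is unbounded, $(Du_0(t))^{-1}$ lies in $Z^k$ rather than $H^k$, so the product estimate you need is $Z^k\cdot H^k\subset H^k$. Your cofactor and \eqref{u0-hl} bound supplies exactly this.
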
 

\begin{proof} 
It is straightforward to verify the exponential estimate of $U(t)$ using \eqref{E:composition-1}, \eqref{u0-L-infinity}, \eqref{u0-hl}, and assumption (ED). To prove the equality, let us use the same notation $(\tilde \rho(t), \tilde v(t))^T = e^{tL_u}(\bar \rho, \bar v)^T$ as above. Direct computation based on the definitions of $\Lambda_0(t)$ and $U(t)$ shows 
\[
\Lambda_0 (t) U(t) \begin{pmatrix} \bar \rho \\ \bar v  \end{pmatrix} = \begin{pmatrix}
       - \nabla \rho_0 \cdot \int_{-\infty}^t \big( Du_0(t) (Du_0(s))^{-1} \big(\tilde v(s) \circ u_0(s) \big) \big)\circ u_0(t)^{-1} ds 
              \\    \tilde v(t) 
    \end{pmatrix}. 
\]
Since $u_0(s)$ is the flow map of the autonomous vector field $v_0$ on $\Omega$, it satisfies 
\[
u_0(s) \circ u_0(t)^{-1} = u_0(s-t), \quad 
Du_0(t) (Du_0(s))^{-1} = (D u_0(t-s)) \circ u_0(s). 
\]
Hence we obtain from Lemma \ref{L:X_u-rho}
\begin{align*}
\pi_\rho \Lambda_0(t)  U(t) ( \bar \rho, \bar v)^T = & -\nabla \rho_0 \cdot \int_{-\infty}^t  \big( Du_0(t-s) \tilde v(s) \big) \circ u_0(s-t) ds 
= \tilde \rho(t),
\end{align*}
which yields the desired identity. 
\end{proof}

We are ready to prove some basic properties of $\bP_u(t)$. 

\begin{lemma} \label{L:bP_u} 
There exists $K \in \mathbb{N}$ determined by $k$ such that, if $\lambda_u > K\mu$, then there exists $C>0$ depending on $k, \Omega, \rho_\pm, C_*, \mu$,  $\mu_0$, $|\rho_0|_{Z^k}, |D v_0|_{Z^{k}}$, and $\lambda_u  -K \mu$ such that for all $t \le 0$, 
\[
\Lambda_0(t) \bP_u(t) = \bP_u \Lambda_0(t), \quad \bP_u(t)^2 = \bP_u (t), \quad \bP_u(t)U(t)=U(t), \quad \left\|\bP_{u}(t)\right\|_{\cL\left(\left(H^k_\Euler\right)^2 \right)} \leq C e^{-K\mu t}.
\]
\end{lemma}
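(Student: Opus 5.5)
All four assertions follow from Lemma \ref{L:U} and the definition \eqref{E:bP_u}, $\bP_u(t) = U(t) e^{-tL_u} \bP_u \Lambda_0(t)$, using only that $e^{-tL_u}$ and $\bP_u$ commute with the group on $X_u$. Throughout we take $K$ at least as large as in Lemma \ref{L:U} and fix $t \le 0$.

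\emph{Intertwining.} By Lemma \ref{L:U}, $\Lambda_0(t) U(t) = e^{tL_u}$ on $X_u$. Since $e^{-tL_u}\bP_u\Lambda_0(t) z \in X_u$, we get
\[
\Lambda_0(t)\bP_u(t) z = e^{tL_u} e^{-tL_u} \bP_u \Lambda_0(t) z = \bP_u \Lambda_0(t) z .
\]

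\emph{Invariance of the range of $U(t)$.} For $x\in X_u$, the intertwining identity and Lemma \ref{L:U} give
\[
\bP_u(t) U(t) x = U(t)e^{-tL_u}\bP_u\Lambda_0(t)U(t)x = U(t) e^{-tL_u} \bP_u e^{tL_u} x .
\]
Since $e^{tL_u}x\in X_u$, we have $\bP_u e^{tL_u} x = e^{tL_u}x$. Hence $\bP_u(t)U(t)x = U(t)x$.

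\emph{Idempotency.} Since $\bP_u(t) = U(t)\circ(\cdots)$, it has range inside $U(t)X_u$. The previous step then gives $\bP_u(t)^2=\bP_u(t)$.

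\emph{Operator bound.} This is the only quantitative step. We factor the norm as
\[
\|\bP_u(t)\| \le \|U(t)\|_{\cL(X_u,(H^k_\Euler)^2)}\, \|e^{-tL_u}\|_{\cL(X_u)}\, \|\bP_u\|\, \|\Lambda_0(t)\|_{\cL((H^k_\Euler)^2, H^k\times H^k_\Euler)} .
\]
The factors are estimated as follows.
\begin{enumerate}
\item By Lemma \ref{L:U}, $\|U(t)\|\le Ce^{(\lambda_u-K\mu)t}$.
\item By (ED), $e^{-tL_u}$ has $-t\ge0$, so it is a backward-in-time evolution on $X_u$. Applying the (ED) bound for $e^{sL_u}$ with $s\le 0$ to its inverse gives the equivalent form $\|e^{-tL_u}x\| \le M e^{-\lambda_u t}\|x\|$ for $t\le 0$. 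This is the reverse-direction (expansion) estimate implied by the dichotomy; it is this form of (ED) that we must justify from the assumption.
\item $\bP_u$ is bounded since $X_u$ and $X_{cs}$ are closed complementary subspaces.
\item From the explicit formula \eqref{E:Lambda0}, using \eqref{E:composition-1}, \eqref{u0-L-infinity}, \eqref{u0-hl}, and $\nabla\rho_0\in Z^{k}$, we get $\|\Lambda_0(t)\|\le Ce^{K'\mu|t|}$. Indeed, composition with $u_0(t)^{-1}$ and multiplication by $Du_0(t)$ each cost a factor polynomial in $|Du_0|_{Z^{k-1}}$, hence only $e^{K'\mu|t|}$.
\end{enumerate}
Multiplying, the exponentials $e^{\lambda_u t}$ and $e^{-\lambda_u t}$ cancel. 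This leaves $Ce^{-K\mu t}e^{K'\mu|t|} = Ce^{-(K+K')\mu t}$, so after enlarging $K$ we obtain the claim.

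We expect no serious obstacle. The only care needed is in item 2 (the expansion form of the (ED) estimate) and in tracking the polynomial dependence on $Du_0(t)$ in $\|\Lambda_0(t)\|$ for item 4.
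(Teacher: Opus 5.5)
Your three identities are correct and are essentially the paper's computations, using $\Lambda_0(t)U(t)=e^{tL_u}$ from Lemma \ref{L:U} and $\bP_u e^{tL_u}=e^{tL_u}$ on $X_u$. The gap is in item 2 of the operator bound.

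\textbf{Why item 2 fails.} (ED) only gives $\|e^{sL_u}\|_{\cL(X_u)}\le Me^{\lambda_u s}$ for $s\le 0$. Inverting it yields the \emph{lower} bound $\|e^{\tau L_u}y\|_{H^k}\ge M^{-1}e^{\lambda_u\tau}\|y\|_{H^k}$ for $\tau\ge0$. It does not yield the upper bound $\|e^{-tL_u}\|_{\cL(X_u)}\le Me^{-\lambda_u t}$, $t\le 0$, that you claim; note also that $e^{-tL_u}$ with $-t\ge 0$ is a \emph{forward}-in-time evolution.

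No upper bound on forward growth inside $X_u$ is part of (ED), and in general none holds at rate $\lambda_u$. In the proof of Proposition \ref{rayleigh-taylor-infinite-cylinder}, for instance, $e^{\lambda_u}<\min_{\alpha\in\sigma_u}|\alpha|$, so $e^{\tau L_u}$ grows strictly faster than $e^{\lambda_u\tau}$. With only the group bound $\|e^{\tau L_u}\|\le M'e^{\omega\tau}$, your factorization gives $\|\bP_u(t)\|\le Ce^{-(\omega-\lambda_u+(K+K')\mu)t}$. That exponent is not of the form $K\mu$ with $K$ depending only on $k$, so the cancellation of $e^{\lambda_u t}$ against $e^{-\lambda_u t}$ does not occur.

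\textbf{The fix.} Do not separate $U(t)$ from $e^{-tL_u}$. For $y\in X_u$ the group property gives $e^{sL_u}e^{-tL_u}y=e^{(s-t)L_u}y$, so
\[
U(t)e^{-tL_u}y=\begin{pmatrix}\int_{-\infty}^t (Du_0(s))^{-1}\big(\pi_v e^{(s-t)L_u}y\big)\circ u_0(s)\,ds\\ (Du_0(t))^{-1}(\pi_v y)\circ u_0(t)\end{pmatrix}.
\]
Since $s-t\le 0$ in the integral, only the backward estimate in (ED) is ever used. From \eqref{E:composition-1}, \eqref{u0-L-infinity}, and \eqref{u0-hl} you have $\|(Du_0(s))^{-1}f\circ u_0(s)\|_{H^k}\le Ce^{-K\mu s}\|f\|_{H^k}$ for $s\le 0$.
\begin{enumerate}
\item The first component is bounded by $CMe^{-\lambda_u t}\int_{-\infty}^t e^{(\lambda_u-K\mu)s}\,ds\,\|y\|=\frac{CM}{\lambda_u-K\mu}e^{-K\mu t}\|y\|$.
\item The second component is bounded by $Ce^{-K\mu t}\|y\|$.
\end{enumerate}
Combining this with your item 4 for $\|\bP_u\Lambda_0(t)\|\le Ce^{-K'\mu t}$ gives the lemma after enlarging $K$.

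This combined estimate is what the paper's one-line appeal to (ED) amounts to; the same combination $e^{(s-t)L_u}$ appears in \eqref{E:T1bP_u}. It is also exactly where the hypothesis $\lambda_u>K\mu$ is used.
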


The first two equalities in the lemma imply that $\bP_u(t)$ is a projection operator corresponding to $\bP_u$ through $\Lambda_0 (t)$.

\begin{proof}
The boundedness and the estimates follow directly from \eqref{E:Lambda0}, \eqref{E:composition-1}, \eqref{u0-L-infinity}, \eqref{u0-hl}, and assumption (ED). It remains to show the equalities. From Lemma \ref{L:U}, the definition of $\bP_u(t)$ and the invariance of $X_u$ under $e^{tL}$ assumed in (ED), it is direct to compute
\[
\Lambda_0(t) \bP_u(t) = \Lambda_0(t) U(t) e^{-tL_u} \bP_u \Lambda_0 (t)  =e^{tL_u} e^{-tL_u} \bP_u \Lambda_0 (t) = \bP_u \Lambda_0 (t);
\]
\begin{align*}
\bP_u(t)^2 =& U(t) e^{-tL_u} \bP_u \Lambda_0 (t) U(t) e^{-tL_u} \bP_u \Lambda_0 (t) \\
= & U(t) e^{-tL_u} \bP_u  e^{tL_u} e^{-tL_u} \bP_u \Lambda_0 (t) = U(t) e^{-tL_u}\bP_u \Lambda_0 (t) = \bP_u(t);
\end{align*}
\[
\bP_u(t)U(t)= U(t) e^{-tL_u} \bP_u \Lambda_0 (t) U(t) = U(t) e^{-tL_u} \bP_u e^{tL_u} = U(t).
\]
The proof of the lemma is complete. 
\end{proof}

The $t$-dependent center-stable and unstable subspaces are defined as 
\be \label{E:Y(t)} 
Y_u(t) = \bP_u(t) (H_\Euler^k)^2, \quad Y_{cs}(t) = \ker \bP_u(t), \quad t\le 0. 
\ee

\begin{proposition} \label{time-dependent-dichotomy}
There exists $K \in \mathbb{N}$ determined by $k$ such that, if $\lambda_u >  K\mu$, then there exists $C>0$ depending on $k, \Omega, \rho_\pm, C_*, \mu, \mu_0, \lambda_{cs}$, $|\rho_0|_{Z^k}, |D v_0|_{Z^{k}}$, and $\lambda_u  -K \mu$ such that the following hold for all $t, t_0 \le 0$. 
\begin{enumerate} 
\item $Y_u(t), Y_{cs}(t) \subset (H_\Euler^k)^2$ are closed subspaces and $(H_\Euler^k)^2 = Y_u(t) \oplus Y_{cs}(t)$. 
\item $Y_u(t) = U(t) X_u$ and $U(t): X_u \to Y_u(t)$ is an isomorphism. 
\item For any $t_0, t \le 0$, it holds 
\be \label{E:TbP_u} 
T(t, t_0) \bP_u(t_0) = \bP_u(t) T(t, t_0), \quad T(t, t_0) Y_{u, cs} (t_0) \subset Y_{u, cs}(t). 
\ee
\item Let $T_{u, cs} (t, t_0) = T(t, t_0)|_{Y_{u, cs} (t_0)}$, then we also have 
$$\|T_{u}(t,t_0)\|_{\cL \left(\left(H^k_\Euler\right)^2 \right)} \leq C e^{(\lambda_u - K\mu)(t-t_0)-K\mu t_0} \ \  \forall t \leq t_0 \leq 0,$$
$$\|T_{cs}(t,t_0)\|_{\cL \left(\left(H^k_\Euler\right)^2 \right)}\leq C e^{\lambda_{cs}(t-t_0)-K\mu t_0}\ \ \forall t_0\leq t \leq 0. $$
\end{enumerate}
\end{proposition}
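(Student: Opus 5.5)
Items (1) and (2) follow from the algebraic identities of Lemma \ref{L:bP_u}. Since $\bP_u(t)$ is a bounded projection on $(H_\Euler^k)^2$, its range $Y_u(t)$ and kernel $Y_{cs}(t)$ are closed and complementary. For (2), the inclusion $Y_u(t) \subset U(t)X_u$ is read off from the definition \eqref{E:bP_u}, and $U(t)X_u \subset Y_u(t)$ follows from $\bP_u(t)U(t)=U(t)$. Injectivity of $U(t)$ on $X_u$ comes from $\Lambda_0(t)U(t)=e^{tL_u}$ in Lemma \ref{L:U}, since $e^{tL_u}$ is invertible on $X_u$. This identity also gives a bounded left inverse $e^{-tL_u}\Lambda_0(t)$, with norm $\le Ce^{-\lambda_u t}e^{K\mu|t|}$ by (ED), \eqref{E:composition-1} and \eqref{u0-L-infinity}. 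Hence $U(t): X_u\to Y_u(t)$ is an isomorphism onto a closed subspace.

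For (3), I would first show $T(t,t_0)U(t_0)=U(t)e^{(t_0-t)L_u}$. Both sides solve the linearized Lagrangian equation \eqref{z-linearized-eqn} in $t$. The right side does because $U(t)\xi$ is built, via \eqref{E:U-1}, exactly as \eqref{E:T} reconstructs $T$ from the Eulerian solution $e^{tL}\xi$, namely $z_2=(Du_0)^{-1}(\pi_v \cdot)\circ u_0$ and $z_1'=z_2$. The two sides agree at $t=t_0$, so uniqueness for \eqref{z-linearized-eqn}, a linear ODE with bounded coefficients, gives the identity. Then, using \eqref{E:conjugacy} and the commutation $\bP_u e^{sL}=e^{sL}\bP_u$ from (ED),
\[
T(t,t_0)\bP_u(t_0)=U(t)e^{(t_0-t)L_u}e^{-t_0L_u}\bP_u\Lambda_0(t_0)=U(t)e^{-tL_u}\bP_u e^{(t-t_0)L}\Lambda_0(t_0)=U(t)e^{-tL_u}\bP_u\Lambda_0(t)T(t,t_0)=\bP_u(t)T(t,t_0).
\]
The invariance of $Y_{u}(t)$ and $Y_{cs}(t)$ under $T$ follows immediately.

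For the unstable estimate in (4), write $z\in Y_u(t_0)$ as $z=U(t_0)\xi$ with $\xi=e^{-t_0L_u}\bP_u\Lambda_0(t_0)z$. Then $T_u(t,t_0)z=U(t)e^{-t_0L_u}\bP_u\Lambda_0(t_0)z$. I would estimate each factor separately: Lemma \ref{L:U} gives $\|U(t)\|\le Ce^{(\lambda_u-K\mu)t}$, (ED) gives $\|e^{-t_0L_u}\|\le Me^{-\lambda_u t_0}$ since $-t_0\ge0$, and $\|\Lambda_0(t_0)\|\le Ce^{K\mu|t_0|}$. Multiplying these yields $Ce^{(\lambda_u-K\mu)(t-t_0)-K'\mu t_0}$ after adjusting $K$.

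The center-stable estimate in (4) is the main obstacle, because $\Lambda_0$ is not injective and so $T$ cannot be recovered directly from the Eulerian flow. For $z\in Y_{cs}(t_0)$, the identity $\Lambda_0\bP_u(t)=\bP_u\Lambda_0$ from Lemma \ref{L:bP_u} gives $\Lambda_0(t_0)z\in X_{cs}$. I would then treat the two components separately using \eqref{E:T}.
\begin{itemize}
\item For the velocity component, $T_2(t,t_0)z=(Du_0(t))^{-1}(\pi_v e^{(t-t_0)L_{cs}}\Lambda_0(t_0)z)\circ u_0(t)$. Its norm is bounded by $Ce^{K\mu|t|}e^{\lambda_{cs}(t-t_0)}e^{K\mu|t_0|}\|z\|$.
\item For the position component, $T_1=z_1+\int_{t_0}^tT_2\,d\tau$. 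Since $t_0\le t\le0$, the factors $e^{K\mu|\tau|}\le e^{-K\mu t_0}$ for $\tau\in[t_0,t]$, and integrating $e^{\lambda_{cs}(\tau-t_0)}$ over this interval gives at most $Ce^{\lambda_{cs}(t-t_0)}$ times a polynomial factor in $t-t_0$ when $\lambda_{cs}=0$.
\end{itemize}
If $\lambda_{cs}=0$, I would absorb that polynomial factor by slightly enlarging $K$, using $\mu>0$ and $t-t_0\le|t_0|$; this is where the constant depends on $\lambda_{cs}$. Since $|t|\le|t_0|$, all exponential weights then combine into $e^{\lambda_{cs}(t-t_0)-K\mu t_0}$.
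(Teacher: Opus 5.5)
Parts (1), (2) and the center-stable half of (4) match the paper. The unstable half of (4) has a step that fails, and the intertwining identity in (3) is misstated.

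\textbf{The unstable bound in (4).} You bound $\|e^{-t_0L_u}\|_{\cL(X_u)}\le Me^{-\lambda_u t_0}$ ``by (ED), since $-t_0\ge 0$''. But (ED) controls $e^{sL_u}$ only for $s\le 0$. The operator $e^{-t_0L_u}$ is a \emph{forward} evolution on $X_u$, and for forward time $\lambda_u$ is a lower growth rate, not an upper one. If $X_u$ contains an eigenvalue with real part $\Lambda>\lambda_u$ (nothing in (ED) prevents this), then $\|e^{-t_0L_u}\|\ge e^{\Lambda|t_0|}$. Your product $\|U(t)\|\,\|e^{-t_0L_u}\|\,\|\Lambda_0(t_0)\|$ then carries an extra factor $e^{(\Lambda-\lambda_u)|t_0|}$. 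That factor cannot be absorbed into $e^{-K\mu t_0}$ with $K$ depending only on $k$.

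The same misreading of (ED) appears in your norm bound for the left inverse in (2). There it is harmless, since only boundedness at fixed $t$ is needed.

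The fix, which is what the paper does via the second equalities in \eqref{E:T2bP_u} and \eqref{E:T1bP_u}, is to let the group factors combine before estimating. Set $\eta=\bP_u\Lambda_0(t_0)z\in X_u$. The components of $U(t)e^{-t_0L_u}\eta$ are $(Du_0(t))^{-1}\big(\pi_v e^{(t-t_0)L_u}\eta\big)\circ u_0(t)$ and $\int_{-\infty}^t (Du_0(s))^{-1}\big(\pi_v e^{(s-t_0)L_u}\eta\big)\circ u_0(s)\,ds$, with $s\le t\le t_0$. Only backward evolutions occur, so (ED) applies. The compositions with $u_0$ cost only $e^{K\mu|s|}$, and the integral converges because $\lambda_u>K\mu$. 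Equivalently, factor as $\big[U(t)e^{-tL_u}\big]\,e^{(t-t_0)L_u}\,\bP_u\Lambda_0(t_0)$, using $\|U(t)e^{-tL_u}\|\le Ce^{-K\mu t}$ (which follows from (ED) for $t\le 0$).

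\textbf{The identity in (3).} The correct relation is $T(t,t_0)U(t_0)=U(t)$, not $U(t)e^{(t_0-t)L_u}$. Your own justification shows that $t\mapsto U(t)\xi$ is the Lagrangian solution whose Eulerian image is $e^{tL_u}\xi$. Indeed, by \eqref{E:conjugacy} and Lemma \ref{L:U}, $\Lambda_0(t)T(t,t_0)U(t_0)\xi=e^{tL_u}\xi$, whereas $\Lambda_0(t)U(t)e^{(t_0-t)L_u}\xi=e^{t_0L_u}\xi$. The second equality in your displayed chain is also invalid, since it inserts $e^{(t-t_0)L}$ for free; the two errors cancel.

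With the correct identity the chain reads $T(t,t_0)\bP_u(t_0)=U(t)e^{-t_0L_u}\bP_u\Lambda_0(t_0)=U(t)e^{-tL_u}\bP_u e^{(t-t_0)L}\Lambda_0(t_0)=U(t)e^{-tL_u}\bP_u\Lambda_0(t)T(t,t_0)=\bP_u(t)T(t,t_0)$. This is a compact form of the paper's componentwise computation, and it is the formula you actually use in (4).
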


\begin{proof} 
Statement (1) is a direct corollary of $\bP_u(t)^2= \bP_u(t) \in \cL( (H_\Euler^k)^2)$ proved in Lemma \ref{L:bP_u}. 

On the one hand, from the definition \eqref{E:bP_u} we have $Y_u(t) \subset U(t) X_u$. On the other hand, $\bP_u(t) U(t) = U(t)$ proved in Lemma \ref{L:bP_u} implies $U(t) X_u \subset Y_u(t)$. Hence $Y_u(t) = U(t) X_u$. From Lemma \ref{L:U} and the invertibility of $e^{tL_u}$, $U(t)$ is injective and bounded. Open mapping theorem implies that $U(t): X_u \to Y_u(t)$ is isomorphic and thus statement (2) follows.

To prove statement (3), we start with the formula \eqref{E:T} of $T(t, t_0)$ and use the commutativity $\bP_u e^{\tau L} = e^{\tau L} \bP_u$, the conjugacy \eqref{E:conjugacy}, and Lemma \ref{L:bP_u} to compute 
\be \label{E:T2bP_u} \begin{split}
T_2 (t, t_0) \bP_u(t_0) z= & (D u_0(t))^{-1} \big(\pi_v e^{(t-t_0)L} \Lambda_0(t_0) \bP_u (t_0) z \big) \circ u_0(t) \\
= & (D u_0(t))^{-1} \big(\pi_v e^{(t-t_0)L} \bP_u \Lambda_0(t_0) z \big) \circ u_0(t) \\
= & (D u_0(t))^{-1} \big(\pi_v \bP_u   \Lambda_0(t) T(t, t_0) z \big) \circ u_0(t) = \pi_2 \bP_u(t) T(t, t_0)z, 
\end{split} \ee
where $\pi_j$, $j=1,2$, denotes the projection to the first and second components of $z$, respectively.  
Again using \eqref{E:T}, \eqref{E:conjugacy}, the definition \eqref{E:bP_u} of $\bP_u$, Lemma \ref{L:bP_u}, and the above equalities of $T_2 (t, t_0) \bP_u(t_0) z$, we have 
\be \label{E:T1bP_u} \begin{split}
T_1 (t, t_0) \bP_u(t_0) z= &  \int_{-\infty}^{t_0} (Du_0(s))^{-1}\big(\pi_v e^{(s-t_0)L_u} \bP_u \Lambda_0 (t_0) z \big) \circ u_0(s) ds+ \int_{t_0}^{t} T_2 (s, t_0) \bP_u(t_0) z ds \\
= &  \int_{-\infty}^{t} (Du_0(s))^{-1}\big(\pi_v e^{(s-t)L_u} e^{(t-t_0)L_u} \bP_u \Lambda_0 (t_0) z \big) \circ u_0(s) ds \\
= &  \int_{-\infty}^{t} (Du_0(s))^{-1}\big(\pi_v e^{(s-t)L_u}\bP_u\Lambda_0 (t) T(t, t_0)   z \big) \circ u_0(s) ds = \pi_1 \bP_u(t) T(t, t_0) z. 
\end{split} \ee
Summarizing the above equalities we obtain the dynamic commutativity between $\bP_u(t)$ and $T(t, t_0)$ in \eqref{E:TbP_u}. The invariance of $Y_{u, cs}(t)$ under $T(t, t_0)$ follows as a corollary immediately.

For $t\le t_0 \le 0$ and $z\in Y_u(t_0) = \bP_u(t_0) (H_\Euler^k)^2$, the exponential decay estimate of $T_2(t, t_0) z$ and $T_1(t, t_0) z$ follow from (ED), \eqref{u0-L-infinity}, \eqref{u0-hl}, \eqref{E:composition-1}, as well as the second equalities in \eqref{E:T2bP_u} and \eqref{E:T1bP_u}, respectively. 

Similarly, for $t_0 \le t\le 0$ and $z\in Y_{cs} (t_0) =\ker  \bP_u(t_0)$, using $\bP_{cs} = I -\bP_u$, \eqref{E:T}, and the second equality in \eqref{E:T2bP_u} we obtain  
\begin{align*}
T_2 (t, t_0) z =& T_2(t, t_0) (I-\bP_u(t_0)) z 
= (D u_0(t))^{-1} \big(\pi_v e^{(t-t_0)L_{cs}}\bP_{cs} \Lambda_0(t_0) z \big) \circ u_0(t), 
\end{align*}
which along with (ED), \eqref{u0-L-infinity}, \eqref{u0-hl},  and \eqref{E:composition-1} yields the desired exponential decay estimate of $T_2 (t, t_0) z$. The estimate on $T_1 (t, t_0) z$ follows directly from that of $T_2 (t, t_0) z$ and \eqref{E:T}. 
\end{proof}

\subsection{Unstable Integral Manifolds in Lagrangian formulation} \label{SS:InMa-L}

Motivated by Lemma \ref{L:UnstableS-L}, we follow the Lyapunov-Perron integral equation method to construct the integral unstable manifolds of (\ref{stratified-euler-taylor}) as in \cite{lin-zeng-manifold}. 
One observes that the projection maps $\bP_{u,cs}(t)$ may grow exponentially as $t \to -\infty$, see Proposition \ref{time-dependent-dichotomy}. 
Intuitively this is due to the fact that the angle between unstable subspace $Y_u(t)$ and center-stable subspace $Y_{cs}(t)$ may shrink exponentially  as $t \to -\infty$. 
To overcome this extra exponential growth, we use the quadratic nature of $\sF_{u,cs} = \mathcal O (\|z\|^2)$ (defined in \eqref{stratified-euler-taylor}) combined with the large spectral gap \eqref{E:lambda-1} assumed in  Theorem \ref{main-theorem}. For $\lambda \in (\lambda_{cs},\lambda_u) $, 
define 
$$S_\lambda \triangleq \left\{z=(z_u,z_{cs}) \in C^0 \left((-\infty,0],
(H^k_\Euler)^2\right)
:\|z\|_{\lambda} \leq \infty\right\},$$
where
\[
z_{u,cs}(t) = \bP_{u, cs} (t) z(t) \in Y_{u,cs}(t), \quad \|z\|_\lambda = \sup_{t \leq 0} e^{-\lambda t} \|z(t)\|_{H^k}. 
\]
Clearly, $ S_\lambda$ is  the Banach space of continuous functions with the exponentially decaying rate  $\lambda$ backward in time, equipped with the exponentially weighted norm.
Fixing $\delta_0, \delta_1>0$ to be determined later, we define the Lyapunov-Perron integral operator $\sL$ on $\overline {B_{\delta_1} ( S_\lambda)} \times Y_u(0)$ as 
\be \label{E:sL}
\sL(z,z_{u0})\triangleq\tilde{z} = \left(\tilde{z}_{u},\tilde{z}_{cs}\right),
\ee
where, for each 
$t \leq 0$,
\begin{equation}
\label{lyapunov-perron}
\begin{cases}
    \tilde{z}_{u}(t) = T_u(t,0)z_{u0} + \int_0^t T_u(t,s) {\sF}_{u}(s,z(s)) ds, \\
    \tilde{z}_{cs}(t) = \int_{-\infty}^t T_{cs} (t,s) \sF_{cs}(s,z(s))ds. 
\end{cases}
\end{equation}
Here $\sF_{u,cs}(t,z) = \bP_{u,cs}(t) \sF(t, z)$ where $\sF$ was defined in (\ref{stratified-euler-taylor}). 
Note that for any  $z(t) \in  B_{\delta} (S_\lambda)$ where $\delta$ is given in Lemma \ref{L:Euler-L-1}, $z(t)$ is a solution to (\ref{stratified-euler-taylor}) with its initial data satisfying $z_u(0) = z_{u0}$ iff $z$ is a fixed point of $\sL(\cdot, z_{u0})$ by the standard invariant manifold theory (see, e.g. \cite{CHOW1988285,CHOW1991266}).

\begin{proposition}    \label{uniform-contraction} 
There exists $K \in \mathbb N$ determined by $k+r$ such that for any $\lambda$ satisfying
    \begin{equation}
        \label{lambda}
        \lambda \in ( \lambda_{cs},\lambda_u - K\mu\big), \quad \lambda > 2K \mu, 
    \end{equation}
    there exists $C>0$ determined by $k+r, \rho_\pm, \Omega$, $C_*, \mu$,  $\mu_0$, $\lambda_u  -K \mu -\lambda$, $\lambda - \lambda_{cs}$, $|\rho_0|_{Z^{k+r}}$, $|D v_0|_{Z^{k+r-1}}$, and $|\nabla p_0|_{Z^{k+r-1}}$ such that for any $\delta_1 \in (0, \delta]$ where $\delta$ is given in Lemma \ref{L:Euler-L-1},
\[
\| \sD_z \sL\|_{C^{0} (\overline{B_{\delta_1} (S_\lambda)}
, \cL(S_\lambda))} \le C \delta_1,
\] 
\end{proposition}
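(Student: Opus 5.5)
The derivative of $\sL$ in $z$ is linear in $\sD_z\sF$ and does not involve $z_{u0}$:
\[
(\sD_z\sL(z,z_{u0})\zeta)(t) = \Big(\int_0^t T_u(t,s)\bP_u(s)\sD_z\sF(s,z(s))\zeta(s)\,ds,\ \int_{-\infty}^t T_{cs}(t,s)\bP_{cs}(s)\sD_z\sF(s,z(s))\zeta(s)\,ds\Big).
\]
So the plan reduces to three estimates: a pointwise bound on $\sD_z\sF(s,z)$ for $\|z\|_{H^k}$ small, the growth bounds on $T_{u,cs}$ and $\bP_{u,cs}(s)$ from Proposition \ref{time-dependent-dichotomy} and Lemma \ref{L:bP_u}, and then integrating the exponential weights.

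\emph{Step 1: pointwise derivative bound.} I will show that for $s\le 0$ and $\|z\|_{H^k}\le\delta$,
\[
\|\sD_z\sF(s,z)\|_{\cL((H^k_\Euler)^2)}\le C e^{-K\mu s}\|z\|_{H^k}.
\]
Write $\tilde\cF(s,z)$ as in \eqref{stratified-euler-taylor}. Its first piece is $\cF_0(s,\rho_0,z_1)-\sD_w\cF_0(s,\rho_0,0)z_1$, a second-order Taylor remainder. The next piece is $(\cF_1(s,\rho_0,z_1)-\cF_1(s,\rho_0,0))z_2$, and the last is quadratic in $z_2$. Differentiating each piece in $z$ produces a factor $\|z\|$, with constants given by the $C^2$ norm of $\cF_0$ and the $C^1$ norms of $\cF_1,\cF_2$. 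These are finite by Lemma \ref{L:Euler-L-1} since $r\ge 4$. By that lemma they are bounded by $C|Du_0(s)|_{Z^{k+r-1}}^K$, plus the $\int_0^s$ term for $\cF_0$.

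Here lies a subtlety, and I expect it to be the main obstacle. Because $\rho_0$ is steady, $\sigma=\rho_0$ gives $w_*=0$, $\rho_0\circ u_0(s)=\rho_0$ and $\nabla\rho_0\cdot v_0=0$. So the $\int_0^s$ contribution in the proof of Lemma \ref{L:Euler-L-1} vanishes, and only the $|Du_0(s)|_{Z^{k+r-1}}^K$ term remains. That term is $\le Ce^{K\mu|s|}$ by \eqref{u0-L-infinity}, \eqref{u0-hl} and \eqref{E:composition-1}. One must check that the $w$-dependent part of $\frac1{\sigma}-\frac1{\rho_0\circ u}$ is still controlled with only polynomial-in-$e^{\mu|s|}$ constants. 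I would do this by expanding $\rho_0\circ u_0(s)\circ\Psi_{\rho_0}(w)=\rho_0\circ\Psi_{\rho_0}(w)$ via the relabeling identity \eqref{E:relabeling-2}.

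\emph{Step 2: the unstable component.} For $\zeta\in S_\lambda$ and $z\in\overline{B_{\delta_1}(S_\lambda)}$, Step 1 gives $\|\sD_z\sF(s,z(s))\zeta(s)\|\le Ce^{-K\mu s}\delta_1\|\zeta\|_\lambda e^{2\lambda s}$. Lemma \ref{L:bP_u} adds a factor $e^{-K\mu s}$, and Proposition \ref{time-dependent-dichotomy} gives, for $t\le s\le 0$, $\|T_u(t,s)\|\le Ce^{(\lambda_u-K\mu)(t-s)-K\mu s}$. The unstable integrand is therefore bounded by
\[
C\delta_1\|\zeta\|_\lambda e^{(\lambda_u-K\mu)(t-s)}e^{(2\lambda-3K\mu)s}.
\]
Multiplying by $e^{-\lambda t}$ and integrating over $s\in[t,0]$ gives $e^{(\lambda_u-K\mu-\lambda)(t-s)}e^{(\lambda-3K\mu)s}$. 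Both exponents are favorable provided $\lambda<\lambda_u-K\mu$ and $\lambda>3K\mu$. The latter is arranged by renaming $K$, which is why \eqref{lambda} requires $\lambda>2K\mu$. The result is $\le C\delta_1\|\zeta\|_\lambda$.

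\emph{Step 3: the center-stable component.} For $s\le t\le 0$, $\|T_{cs}(t,s)\|\le Ce^{\lambda_{cs}(t-s)-K\mu s}$, and $\|\bP_{cs}(s)\|\le 1+Ce^{-K\mu s}$. After multiplying by $e^{-\lambda t}$ the integrand is bounded by
\[
C\delta_1\|\zeta\|_\lambda e^{(\lambda-\lambda_{cs})(s-t)}e^{(\lambda-3K\mu)s}.
\]
Integrating over $s\in(-\infty,t]$ converges since $\lambda>\lambda_{cs}$ and $\lambda>3K\mu$, giving $C\delta_1\|\zeta\|_\lambda/(\lambda-\lambda_{cs})$.

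Combining the two components gives $\|\sD_z\sL\|\le C\delta_1$. Continuity of $\sD_z\sL$ in $z$ follows from the $C^1$ regularity of $\sF$, using dominated convergence on these integrals.
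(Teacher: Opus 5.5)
Your proposal is correct and follows essentially the same route as the paper. You bound $\|\sD_z\sF_{u,cs}(s,z)\|\le Ce^{K\mu|s|}\|z\|_{H^k}$ using Lemma \ref{L:Euler-L-1}, Lemma \ref{L:bP_u} and $\sD_z\sF(s,0)=0$, feed this into the dichotomy bounds of Proposition \ref{time-dependent-dichotomy}, and integrate against the exponential weights, absorbing the extra $e^{-K\mu s}$ factors by enlarging $K$; the paper instead folds $\bP_{u,cs}(s)$ into the bound on $\sD_z\sF_{u,cs}$, so only $2K\mu$ appears. The $\int_0^t$ contribution you flag as the main obstacle is harmless either way (it vanishes at the steady state, and would anyway be bounded by $Ce^{K\mu|t|}$), and your dominated-convergence argument for continuity of $\sD_z\sL$ does the job of the paper's splitting of the integrals into $[-T,0]$ plus a small exponentially weighted tail.
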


\begin{proof}
By the definition of $\sF_{u, cs}$, \eqref{u0-L-infinity}, \eqref{u0-hl}, \eqref{E:composition-1} and Lemmas \ref{L:Euler-L-1} and \ref{L:bP_u}, there exists $K \in \mathbb N$ determined by $k+r$ such that if $\lambda_u > K\mu$, then for some $C>0$
\be \label{E:sF} \begin{split} 
&\sF_{u,cs}(t,0) =0, \quad \sD_{z} \sF_{u,cs}(t,0) = 0, \\ 
&\left\|\sD^2_{z} \sF_{u,cs}(t,\cdot)\right\|_{C^{r-4}\left((B_{\delta}(H^k_\Euler))^2,\, \cL ((H^k_\Euler)^2 \otimes (H^k_\Euler)^2, (H_\Euler^k)^2)\right)} \leq Ce^{K \mu |t|}.
\end{split} \ee
These properties also imply 
\be \label{E:sF-1}
\left\|\sD_{z} \sF_{u,cs}(t, z)\right\|_{\cL ((H^k_\Euler)^2 \otimes (H^k_\Euler)^2, (H_\Euler^k)^2)} \leq Ce^{K \mu |t|} \|z\|_{H^k}, \quad \forall z \in B_{\delta}(H^k_\Euler))^2.
\ee

To show the smoothness of $\sL$ and estimate its derivatives, we focus on the dependence of $\sL$ on $z\in \overline{B_{\delta_1} (S_\lambda)}$. 
    Formally, for any $1\le m \le r-2$ and $\bar z\in S_\lambda$,  its derivative takes the form  
    \[
    \sD_z^m \sL(z,z_{u0})(\bar z, \ldots, \bar z)\triangleq\hat{z} =\left(\hat{z}_{u},\hat{z}_{cs}\right), 
    \]
    where      
    \begin{equation}
    \label{DzL}
    \begin{cases}
        \hat{z}_u(t) = \int_0^t T_u(t,s) \sD_z^m \sF_u(s,z(s))\big( \bar z(s), \ldots, \bar z(s) \big) ds\\
        \hat{z}_{cs}(t) =\int_{-\infty}^t T_{cs}(t,s) \sD_z^m \sF_{cs}(s,z(s))\big( \bar z(s), \ldots, \bar z(s) \big)ds.
    \end{cases}
    \end{equation}
Using \eqref{E:sF-1} and  Proposition \ref{time-dependent-dichotomy}, it is straightforward to  estimate $\sD_z^m$ for $m=1$ and $t\le 0$,     
\begin{align*}
e^{-\lambda t} \| \hat z_u(t) \|_{H^k} \le & C \|z\|_\lambda \|\bar z\|_{\lambda} \int_t^0 e^{ -\lambda t + (\lambda_u - K \mu)(t-s)-K\mu s - K \mu s + 2 \lambda s} ds \\
\le & C \|z\|_\lambda \|\bar z\|_{\lambda} \int_t^0 e^{ (\lambda_u - K \mu -\lambda)(t-s) + (\lambda - 2K\mu) s} ds \le \frac {C \|z\|_\lambda \|\bar z\|_{\lambda}}{\lambda_u - K \mu -\lambda},
\end{align*}
\begin{align*}
e^{-\lambda t} \| \hat z_{cs}(t) \|_{H^k} \le & C \|z\|_\lambda \|\bar z\|_{\lambda} \int_{-\infty}^t e^{ -\lambda t + \lambda_{cs} (t-s)-K\mu s - K \mu s + 2 \lambda s} ds \le \frac {C \|z\|_\lambda \|\bar z\|_{\lambda}}{\lambda-\lambda_{cs}}. 
\end{align*}
where $\lambda \ge 2K\mu$ was also used. Therefore 
\be \label{E:DsF-1} 
\|\sD_z \sL (z, z_{u0}) \bar z \|_\lambda \le C \|z\|_\lambda \|\bar z\|_{\lambda}. 
\ee    
For $2\le m \le r-2$, using \eqref{E:sF} instead, much as the above we have 
\be \label{E:DsF-m} 
\|\sD_z^m \sL (z, z_{u0}) (\bar z, \ldots, \bar z) \|_\lambda \le C \|\bar z\|_{\lambda}^m. 
\ee    

To prove that these operators $\sD^m \sL$ are truly the Fr\'echet derivatives of $\sL$, we only need to prove $\sD_z^{r-2} \sL$ is $C^0$ in $z$. In fact, for any $z, z+\tilde z \in \overline{B_{\delta_1} (S_\lambda)}$ and $\bar z \in S_\lambda$ with $\|\bar z\|_\lambda \le 1$, let 
\[
\left(\hat{z}_{u},\hat{z}_{cs}\right) = \sD_z^{r-2} \sL(z+\tilde z,z_{u0})(\bar z, \ldots, \bar z)-  \sD_z^{r-2} \sL(z,z_{u0})(\bar z, \ldots, \bar z). 
\]
Since $r-2 \ge 2$, 
again by \eqref{E:sF} and Proposition \ref{time-dependent-dichotomy} we have, for $t\le 0$, 
\begin{align*}
&e^{-\lambda t}\left\| \hat z_u (t) \right\|_{H^k} \\
\le & C \|\bar z\|_{\lambda}^{r-2} \int_t^0 e^{(\lambda_u - K\mu -\lambda) (t-s) + ((r-3) \lambda - K\mu)s
} \big\| \sD_z^{r-2} \sF_u \big(s, (z+\tilde z)(s)\big) - \sD_z^{r-2} \sF_u \big(s, z(s)\big) \big\| ds,
    \end{align*}
where the norm of $\sD_z^{r-2} \sF$ is taken in the space of bounded multilinear transformations.     
For any $\alpha>0$, on the one hand, from \eqref{E:sF} there exists $T>0$ independent of $z$, $\tilde z$, and $\bar z$ such that 
\[
C  \int_t^{\max\{t, -T\}} e^{ ((r-3) \lambda - K\mu)s } \big\| \sD_z^{r-2} \sF_u \big(s, (z+\tilde z)(s)\big) - \sD_z^{r-2} \sF_u \big(s, z(s)\big) \big\| ds 
\le C  \int_{-\infty}^{-T} e^{ ( \lambda - 2K\mu)s} d\tau \le \alpha 
\]    
On the other hand, 
\[
\lim_{\|\tilde z\|_\lambda \to 0} \int_{-T}^0 \big\| \sD_z^{r-2} \sF_u \big(s, (z+\tilde z)(s)\big) - \sD_z^{r-2} \sF_u \big(s, z(s)\big) \big\| ds =0. 
\]    
Therefore we obtain $\|\hat z_u \|_\lambda \to 0$ as $\|\tilde z\|_{\lambda} \to 0$ uniformly in $\bar z$ as long as $\|\bar z\|_{\lambda} =1$.  The convergence of $\hat z_{cs}$ can be obtained similarly. Hence $\sD_z^{r-2} \sL(z, z_{u0})$ is $C^0$ in $z \in  \overline{B_{\delta_1} (S_\lambda)}$ and $\sL$ is a $C^{r-2}$ mapping. This completes the proof of the lemma. 
\end{proof}

Clearly $\sL$ is linear in $z_{u0}$ and from Proposition \ref{time-dependent-dichotomy} we have 
\be \label{E:temp-14}
\|\sL(0, z_{u0}) \|_\lambda = \| T_u(\cdot, 0) z_{u0}\|_\lambda = \sup_{t\le 0} C e^{(\lambda_u - K\mu -\lambda) t} \|z_{u0}\|_{H^k} \le C \|z_{u0}\|_{H^k}.
\ee
For $\delta_0, \delta_1>0$ such that 
\be \label{E:delta}
2C\delta_1 < 1, \quad 2C \delta_0\le \delta_1, 
\ee
the above inequality and Proposition \ref{uniform-contraction} imply that, for any $z_{u0} \in B_{\delta_0} (Y_u(0))$,  $\sL(\cdot, z_{u0})$ is a contraction on $\overline{B_{\delta_1} (S_\lambda)}$ with a Lipschitz constant $\frac 12$. By the uniform contraction mapping theorem (see, e.g. Theorem 2.2 in \cite{Chow-Hale}), 
there exists 
\be \label{fixed-point} \begin{split} 
z^\star  \in C^{r-2} \big( B_{\delta_0} & (Y_u(0)),  \overline {B_{\delta_1} (S_\lambda)}\big) \; \text{ such that } \; \\
&\sL (z, z_{u0}) = z, \ (z, z_{u0}) \in  \overline {B_{\delta_1} (S_\lambda)} \times B_{\delta_0}(Y_u(0)) \Longleftrightarrow z = z^\star (z_{u0}).
\end{split} \ee
From the definition and Lipschitz estimate of $\sL$ as well as \eqref{E:temp-14}, we have 
\begin{equation}
\label{fixed-point-initial}
z_u^\star (0, z_{u0}) = z_{u0}, \quad    \left\|z^\star(z_{u0})\right\|_\lambda  \leq C \|z_{u0}\|_{H^k}.
\end{equation}

The following lemma shows that $z^\star$ is independent of $\lambda$.  

\begin{lemma} \label{L:UM-L} 
Let both $\lambda$ and $\tilde \lambda$ satisfy \eqref{lambda}, which determine the fixed point functions 
\[
z^\star \in C^{r-2} \big( B_{\delta_0}  (Y_u(0)), \overline {B_{\delta_1} (S_\lambda)}\big), \quad \tilde z^\star \in C^{r-2} \big( B_{\tilde \delta_0} (Y_u(0)),  \overline {B_{\tilde \delta_1} (S_{\tilde \lambda})}\big), 
\]
respectively. Then there exists $\bar \delta_0>0$ such that $z^\star= \tilde z^\star$ on $B_{\bar \delta_0}  (Y_u(0))$. 
\end{lemma}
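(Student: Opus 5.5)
Without loss of generality assume $\tilde\lambda \le \lambda$. Then $S_\lambda \subset S_{\tilde\lambda}$ continuously. Indeed $e^{-\tilde\lambda t} \le e^{-\lambda t}$ for $t\le 0$, so $\|z\|_{\tilde\lambda} \le \|z\|_\lambda$. The plan has two steps: first show that $z^\star(z_{u0})$ is itself a fixed point of the $\tilde\lambda$ Lyapunov--Perron map, and then invoke uniqueness of fixed points in $\overline{B_{\tilde\delta_1}(S_{\tilde\lambda})}$ from \eqref{fixed-point}.

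First I choose $\bar\delta_0 \le \min\{\delta_0,\tilde\delta_0\}$ small enough that $C\bar\delta_0 \le \tilde\delta_1$, where $C$ is the constant in \eqref{fixed-point-initial} associated with $\lambda$. Then for $z_{u0}\in B_{\bar\delta_0}(Y_u(0))$ we get
\[
\|z^\star(z_{u0})\|_{\tilde\lambda} \le \|z^\star(z_{u0})\|_\lambda \le C\|z_{u0}\|_{H^k} \le \tilde\delta_1 ,
\]
so $z^\star(z_{u0}) \in \overline{B_{\tilde\delta_1}(S_{\tilde\lambda})}$.

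Second, the operator $\sL$ in \eqref{E:sL}--\eqref{lyapunov-perron} is defined by the same integral formulas for every $\lambda$. The ingredients $T_{u,cs}$, $\bP_{u,cs}(t)$ and $\sF$ do not depend on $\lambda$; only the function space changes. For $z\in S_\lambda\subset S_{\tilde\lambda}$ the integrals converge absolutely, by the estimates in the proof of Proposition \ref{uniform-contraction}. Hence the $\tilde\lambda$-operator evaluated at $z^\star(z_{u0})$ gives the same function as the $\lambda$-operator, namely $\sL(z^\star,z_{u0}) = z^\star$. Thus $z^\star(z_{u0})$ is a fixed point of the $\tilde\lambda$-operator in $\overline{B_{\tilde\delta_1}(S_{\tilde\lambda})}$. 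Since $\tilde z^\star(z_{u0})$ is the unique such fixed point, $z^\star = \tilde z^\star$ on $B_{\bar\delta_0}(Y_u(0))$.

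The only point needing care is to check that the $cs$-integral $\int_{-\infty}^t$ defines the same object in both spaces, so that nothing depends on the weight. This is immediate from absolute convergence: $\lambda$ satisfies \eqref{lambda}, and the integrand is bounded by $e^{\lambda_{cs}(t-s) - 2K\mu s + 2\lambda s}$, which is integrable at $-\infty$. I expect no other obstacle.
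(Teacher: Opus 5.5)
Your proof is correct and is essentially the paper's argument with the labels of $\lambda$ and $\tilde\lambda$ swapped: the fixed point built with the larger decay rate is placed in the bigger space $S$ of the smaller rate, $\bar\delta_0$ is shrunk so that \eqref{fixed-point-initial} puts it inside the ball there, and then you use that $\sL$ does not depend on the weight together with uniqueness of the fixed point in that ball. Your extra check that the $cs$-integral converges absolutely (using $2\lambda > \lambda_{cs} + 2K\mu$) is a valid justification of the step the paper states in one line.
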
 

\begin{proof}
Without loss of generality, assume $\lambda < \tilde \lambda$, then it is clear $S_{\tilde \lambda} \subset S_\lambda$. For any $\bar \delta_0 \le \min\{\delta_0, \tilde \delta_0\}$ and $z_{u0} \in B_{\bar \delta_0} (Y_u(0))$, the corresponding fixed point  
\[
\tilde z^\star (z_{u0}) \in \overline {B_{\tilde \delta_1} (S_{\tilde \lambda})} \subset  \overline {B_{\tilde \delta_1} (S_\lambda)}, \quad \|\tilde z^\star (z_{u0}) \|_{\lambda} \le \|\tilde z^\star (z_{u0}) \|_{\tilde \lambda} \le C \|z_{u0}\|_{H^k} \le C \bar \delta_0. 
\]
Take $\bar \delta_0$ such that $C \bar \delta_0 \le \delta_1$. Since the definition of $\sL$ is independent of $\lambda$, $\tilde z^\star (z_{u0}) \in \overline {B_{\delta_1} (S_\lambda)}$ is also the fixed point of $\sL(\cdot, z_{u0})$ on $\overline {B_{\delta_1} (S_\lambda)}$. Due to the uniqueness of the fixed point of $\sL(\cdot, z_{u0})$, we obtain $\tilde z^\star (z_{u0}) =  z^\star (z_{u0})$. 
\end{proof}

Define 
\be \label{E:UM-L}
W_L^u (\delta_0)
= \{z^\star (0, z_{u0})
:z_{u0} \in B_{\delta_0}(Y_u(0)) \},
\ee
which is the slice of the unstable integral manifold of (\ref{stratified-euler-taylor}) (equivalent to (\ref{E:Euler-L-3})) at $t=0$ satisfying the following properties. 

\begin{theorem} \label{T:UM-L}
There exists $K \in \mathbb N$ determined by $k+r$ such that, for any equilibrium $(v_0, \rho_0, p_0)$ of the  Euler equation \eqref{stratified-euler} satisfying \eqref{E:steady}, \eqref{u0-L-infinity}, \eqref{E:Lyap-E}, and the exponential dichotomy (ED), and any $\lambda$ satisfying \eqref{lambda}, 
    there exists $C_0 \ge 1$ determined by $k+r, \rho_\pm, \Omega$, $C_*, \mu$,  $\mu_0$, $\lambda_u  -K \mu -\lambda$, $\lambda - \lambda_{cs}$, $|\rho_0|_{Z^{k+r}}$, $|D v_0|_{Z^{k+r-1}}$, and $|\nabla p_0|_{Z^{k+r-1}}$ such that $W_L^u (\frac 1{C_0})$ defined in \eqref{E:UM-L}  satisfies the following. 
\begin{enumerate}     
\item $W_L^u (\frac 1{C_0})$ is a $C^{r-2}$ submanifold in $(H_\Euler^k)^2$ and the tangent space $T_0 W_L^u (\frac 1{C_0}) = Y_u(0)$. 
\item  For any $z_0 \in W_L^u(\frac 1{C_0})$, the solution $z(t)$ to \eqref{stratified-euler-taylor} with the initial value $z(0)=z_0$ satisfies  
$\|z(t)\|_{H^k} \le C_0 e^{\lambda t} \|z_0\|_{H^k}$ for all $t \le 0$.
\item Suppose $\tilde \lambda$ satisfies \eqref{lambda}, then there exists $\delta_2 >0$ such that if  $z(t)$, $t \le 0$, is a solution to \eqref{stratified-euler-taylor}  satisfying $\|z(\cdot)\|_{\tilde \lambda} < \delta_2$, then $z(0) \in W_L^u (\frac 1{C_0})$. 
\end{enumerate}
\end{theorem}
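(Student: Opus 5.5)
The plan is to read all three statements off the fixed point map $z^\star$ from \eqref{fixed-point}, using \eqref{fixed-point-initial} and Lemma \ref{L:UM-L}. Throughout, write $\Gamma(z_{u0}) \triangleq z^\star(0, z_{u0})$. Since $z^\star \in C^{r-2}(B_{\delta_0}(Y_u(0)), \overline{B_{\delta_1}(S_\lambda)})$ and evaluation at $t=0$ is a bounded linear map $S_\lambda \to (H_\Euler^k)^2$, $\Gamma$ is $C^{r-2}$. We choose $C_0$ large so that $\frac 1{C_0} \le \delta_0$ and \eqref{E:delta} holds.

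\textbf{Statement (1).} By \eqref{fixed-point-initial}, $\bP_u(0)\Gamma(z_{u0}) = z_{u0}$. Hence
\[
\Gamma(z_{u0}) = z_{u0} + h(z_{u0}), \qquad h(z_{u0}) \triangleq \bP_{cs}(0)\Gamma(z_{u0}) = \int_{-\infty}^0 T_{cs}(0,s)\sF_{cs}(s, z^\star(s))\,ds,
\]
so $W_L^u(\frac 1{C_0})$ is the graph of a $C^{r-2}$ map $h: B_{1/C_0}(Y_u(0)) \to Y_{cs}(0)$ over the splitting $(H_\Euler^k)^2 = Y_u(0)\oplus Y_{cs}(0)$ of Proposition \ref{time-dependent-dichotomy}(1). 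This makes it a $C^{r-2}$ submanifold. For the tangent space, note that $z^\star(0)=0$ because $\sL(0,0)=0$ and the fixed point is unique. Differentiating $z^\star = \sL(z^\star, z_{u0})$ at $z_{u0}=0$ and using $\sD_z\sL(0,\cdot)=0$ (which follows from \eqref{E:DsF-1} with $z=0$) gives $\sD z^\star(0)\bar z_{u0} = T_u(\cdot,0)\bar z_{u0}$. Its $Y_{cs}$ component vanishes, so $\sD h(0)=0$ and $T_0W_L^u = Y_u(0)$.

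\textbf{Statement (2).} Take $z_0 = \Gamma(z_{u0})$. The fixed point $z^\star(z_{u0})$ lies in $B_\delta(S_\lambda)$, so by the equivalence noted after \eqref{lyapunov-perron} it is a solution of \eqref{stratified-euler-taylor} with $z^\star(0) = z_0$. Uniqueness for the ODE \eqref{stratified-euler-taylor} (Lemma \ref{L:Euler-L-1}) identifies this with the solution $z(t)$. Then \eqref{fixed-point-initial} yields $\|z(t)\|_{H^k} \le e^{\lambda t} C\|z_{u0}\|_{H^k}$. Finally,
\[
\|z_{u0}\|_{H^k} = \|\bP_u(0) z_0\|_{H^k} \le C\|z_0\|_{H^k}
\]
by Lemma \ref{L:bP_u}, which gives the bound with a suitable $C_0$.

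\textbf{Statement (3).} Let $z$ solve \eqref{stratified-euler-taylor} with $\|z\|_{\tilde\lambda} < \delta_2$. Applying the variation of constants formula with the projections $\bP_{u,cs}(t)$ and the commutation \eqref{E:TbP_u}:
\begin{itemize}
\item for the $cs$ part, we integrate from $t_0$ to $t$ and let $t_0 \to -\infty$. The boundary term $T_{cs}(t,t_0) z_{cs}(t_0)$ is bounded by $Ce^{\lambda_{cs}(t-t_0) - K\mu t_0} e^{\tilde\lambda t_0}$. This tends to $0$ because $\tilde\lambda > \lambda_{cs} + K\mu$, which follows from \eqref{lambda}, namely $\tilde\lambda > 2K\mu$ and $\tilde\lambda>\lambda_{cs}$. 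We will take $K$ large enough that the $e^{-K\mu t_0}$ growth is absorbed; this is the step needing care, and we must check the exponent bookkeeping against \eqref{lambda}.
\item for the $u$ part, we integrate directly from $0$ to $t$.
\end{itemize}
Together these show $z = \sL(z, z_u(0))$. Choose $\delta_2$ so small that $\|z_u(0)\|_{H^k} \le C\delta_2 < \bar\delta_0$ and $\|z\|_{\tilde\lambda} \le \tilde\delta_1$. Uniqueness of the fixed point in $\overline{B_{\tilde\delta_1}(S_{\tilde\lambda})}$ then gives $z = \tilde z^\star(z_u(0))$, and Lemma \ref{L:UM-L} gives $\tilde z^\star = z^\star$ on small balls. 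Hence $z(0) \in W_L^u(\frac 1{C_0})$.

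The main obstacle is ensuring that the integrals in the Lyapunov--Perron representation converge despite the $e^{-K\mu t_0}$ growth of the projections. The quadratic estimate \eqref{E:sF-1} together with the gap condition \eqref{lambda} handles this, exactly as in Proposition \ref{uniform-contraction}.
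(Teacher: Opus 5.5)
Your arguments for (1) and (2) are correct and match the paper's. Exhibiting $W_L^u(\frac1{C_0})$ as the graph of $h=\bP_{cs}(0)z^\star(0,\cdot)$ over $B_{1/C_0}(Y_u(0))$ with $\sD h(0)=0$ is an explicit form of the paper's Implicit Function Theorem step via \eqref{E:sDzsL}. Part (2) is \eqref{fixed-point-initial} together with $\|\bP_u(0)\|\le C$.

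The gap is in (3), at the step you flagged. The inequality $\tilde\lambda>\lambda_{cs}+K\mu$ does \emph{not} follow from \eqref{lambda}, which only gives $\tilde\lambda>\lambda_{cs}$ and $\tilde\lambda>2K\mu$. For instance, $\lambda_{cs}=3K\mu$, $\tilde\lambda=\tfrac72K\mu$, $\lambda_u=10K\mu$ satisfy \eqref{lambda}. Neither of your remedies repairs this.
\begin{itemize}
\item Taking $K$ larger only increases the loss $e^{-K\mu t_0}$ in Proposition \ref{time-dependent-dichotomy}. It imposes nothing on $\tilde\lambda-\lambda_{cs}$.
\item The quadratic bound \eqref{E:sF-1} is what makes the integral terms converge under \eqref{lambda}, as in Proposition \ref{uniform-contraction}. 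But the boundary term $T(t,t_0)\bP_{cs}(t_0)z(t_0)$ is linear in $z$, so quadratic smallness gives nothing there.
\end{itemize}
Also, if you bound $T_{cs}(t,t_0)$ and $z_{cs}(t_0)$ separately, you lose $e^{-2K\mu t_0}$, since $\|\bP_{cs}(t_0)\|$ grows too (Lemma \ref{L:bP_u}). To get a single loss, you must estimate the composite through the identity $T_2(t,t_0)(I-\bP_u(t_0))z=(Du_0(t))^{-1}\big(\pi_v e^{(t-t_0)L}\bP_{cs}\Lambda_0(t_0)z\big)\circ u_0(t)$ from the proof of Proposition \ref{time-dependent-dichotomy}. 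Even then the boundary term is only $O\big(e^{\lambda_{cs}t}e^{(\tilde\lambda-\lambda_{cs}-K\mu)t_0}\big)$. The obstruction is structural. A homogeneous Lagrangian solution in $Y_{cs}(t)$ decaying like $e^{\tilde\lambda t}$ is only known to give, via $\Lambda_0(t)$, an Eulerian solution in $X_{cs}$ decaying like $e^{(\tilde\lambda-K\mu)t}$. The dichotomy bounds in (ED) cannot exclude such a solution when $\tilde\lambda-K\mu\le\lambda_{cs}$.

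So your argument proves (3) only under the extra gap $\tilde\lambda>\lambda_{cs}+K\mu$, for the generic $K$ of Proposition \ref{time-dependent-dichotomy}. With that gap, the boundary term tends to $0$ as $t_0\to-\infty$, you get $z=\sL(z,z_u(0))$, and your uniqueness argument plus Lemma \ref{L:UM-L} finishes the proof. This gap is available where (3) is actually used: in the proof of Theorem \ref{main-theorem}(3) the exponent lies in $\mathcal I$, so $\tilde\lambda\ge\lambda_{cs}+2K_0\mu$.

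The paper does not verify this step either. It calls (3) an immediate corollary of Lemma \ref{L:UM-L}, relying on the solution/fixed-point equivalence asserted as standard after \eqref{lyapunov-perron}. You should either cite that equivalence the same way, or state the stronger gap as a hypothesis and use it explicitly. You should not claim it follows from \eqref{lambda}.
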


\begin{proof}
Let $C$ be determined by Proposition \ref{uniform-contraction}. For any $C_0> 4 C^2$, 
according to \eqref{E:delta}, $W_L^u(\frac 1{C_0})$ is well-defined. By the uniqueness of the fixed point of $\sL(\cdot, z_{u0})$, 
$z^\star(\cdot,0) = 0$ and hence $0\in W_L^u (\frac 1{C_0})$. 
Differentiating the fixed point equation we have 
$$
\sD_{z_{u0}} z^\star(\cdot,z_{u0}) = \sD_{z_{u0}}\sL(z^\star(\cdot,z_{u0}),z_{u0}) + \sD_z \sL (z^\star(\cdot,z_{u0}),z_{u0})\sD_{z_{u0}} z^\star(\cdot,z_{u0}).
$$
Due to (\ref{DzL}) and \eqref{E:sF}, it holds 
$\sD_z \sL(0, 0)  = 0$ which implies 
\be \label{E:sDzsL}
\sD_{z_{u0}} z^\star(\cdot,0) \tilde z_u= \sD_{z_{u0}} \sL (0,0) \tilde z_u= T_u(\cdot,0)\tilde z_u \implies   \sD_{z_{u0}} z^\star(0,0) \tilde z_u = \tilde z_u \in Y_u(0).
\ee
Therefore statement (1) follows from the Implicit Function Theorem. 

From the definition of $W_L^u (\frac 1{C_0})$, for any $z_0 \in W_L^u(\frac 1{C_0})$, its solution $z(t)$ to \eqref{stratified-euler-taylor} is given by $z(t) = z^\star (t, \bP_u (0) z_0)$. Hence statement (2) follows directly from \eqref{fixed-point-initial}. 


Finally, statement (3) is an immediate corollary of Lemma \ref{L:UM-L}.
\end{proof}

\subsection{Invariant unstable manifolds in Eulerian formulation} \label{SS:InMa-E}

Finally we come back to construct the local unstable manifold of the steady state $(v_0,\rho_0, p_0)$ in the original Eulerian coordinates of the stratified Euler equations \eqref{stratified-euler}. Throughout this subsection, all solutions are assumed to satisfy $\nabla (p - p_0) \in H^k(\Omega, \R^d)$ and thus their local well-posedness is ensured by Theorem \ref{T:LWP}.  
Letting $t=0$ in the correspondence $\Lambda$ defined in \eqref{E:transform} between \eqref{stratified-euler} and its Lagrangian formulation (\ref{stratified-euler-taylor}) or equivalently \eqref{E:Euler-L-3} with $\sigma =\rho_0$, we define 
\be \label{E:UM-H}
H(\bar \rho, \bar v) = \Lambda\big(0, \rho_0, z^\star \big(0, U(0) (\bar \rho, \bar v)^T\big) \big) = \begin{pmatrix}
\rho_0 \circ (\Psi_{\rho_0}(z_1))^{-1} \\
    v_0 + \left(\sD \Psi_{\rho_0}(z_1)z_2\right)\circ(\Psi_{\rho_0}(z_1))^{-1}
\end{pmatrix}, 
\ee
where 
\[
(z_1, z_2)^T = z^\star \big(0, U(0) (\bar \rho, \bar v)^T\big),
\]
and for some sufficiently small $\delta_2>0$
\be \label{E:UM-E}
W^u (\delta_2) \triangleq \{ H(\bar \rho, \bar v) : (\bar \rho, \bar v)^T \in B_{\delta_2} (X_u) \}  
\subset \Lambda(0, \rho_0, W_L^u (\tfrac 1{C_0})) 
 \subset (\rho_0, v_0) + H^k (\Omega, \R)  \times H_\Euler^k.  
\ee
We shall see that $W^u$ is the local unstable manifold of $(\rho_0, v_0)$. However, as seen in Remark \ref{R:non-smooth}, $\Lambda(t, \rho_0, \cdot)$ is a $C^0$, but not smooth, mapping from $(B_\delta (H_\Euler^k))^2$ to $(\rho_0, v_0) + H^k (\Omega, \R)  \times H_\Euler^k$, so $H$ is not smooth and $W^u$ may not be smooth submanifold in $(\rho_0, v_0) + H^k (\Omega, \R)  \times H_\Euler^k$. 

\begin{proposition}  \label{unstable-manifolds-in-eulerian-coordinates}
In addition to \eqref{E:steady}, \eqref{u0-L-infinity}, \eqref{E:Lyap-E}, assume that, for some  $1 \le r_1 \le k$, 
$X_u \subset H^{k-r_1} (\Omega, \R)  \times H_\Euler^{k-r_1}$ is also a closed subspace. There exists $K \in \mathbb N$ determined by $k+r$ such that, for any $\lambda$ satisfying \eqref{lambda}, there exist $C, \delta_2>0$ determined by $k+r, \rho_\pm, \Omega$, $C_*, \mu$,  $\mu_0$, $\lambda_u  -K \mu -\lambda$, $\lambda - \lambda_{cs}$, $|\rho_0|_{Z^{k+r}}$, $|D v_0|_{Z^{k+r-1}}$, and $|\nabla p_0|_{Z^{k+r-1}}$ such that the following hold. 
\begin{enumerate} 
\item for any $1\le r_0 \le \min\{r-2, r_1\}$, $W^u(\delta_2)$ is a $C^{r_0}$ submanifold in $(\rho_0, v_0) + H^{k-r_0} (\Omega, \R)  \times H_\Euler^{k-r_0}$ and 
$T_{(\rho_0, v_0)} W^u(\delta_2) = X_u$.  
\item $H: B_{\delta_2} (X_u) \to W^u(\delta_2)$ is a homeomorphism where $W^u(\delta_2) \subset (\rho_0, v_0) + H^{k} (\Omega, \R)  \times H_\Euler^{k}$ is equipped with the subset topology. 
\item For any $\left(\rho^\ast, v^\ast\right) \in W^u(\delta_2) \cap \big((\rho_0, v_0) + B_{\frac {\delta_2}C}\left(H^k(\Omega,\mathbb R)\times H^k_\Euler \right)\big)$,  the solution $(\rho(t), v(t))$ of (\ref{stratified-euler}), with the initial value $(\rho(0), v(0)) = \left(\rho^\ast, v^*\right)$, satisfies
\[
(\rho(t), v(t))^T \in W^u(\delta_2), \;\; \left\|(\rho(t), v(t)) - (\rho_0, v_0)\right\|_{H^k} \leq Ce^{(\lambda-K \mu)t}\|(\rho^*, v^*) - (\rho_0, v_0)\|_{H^{k-r_1}},\; \forall \, t \leq 0.
\]    
    \end{enumerate}
\end{proposition}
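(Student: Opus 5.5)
The plan is to study $H = \Lambda(0,\rho_0,\cdot)\circ z^\star(0,\cdot)\circ U(0)$ as a composition of three maps and read off each property from the corresponding factor.

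\textbf{Statement (1).} Theorem \ref{T:UM-L} gives $z^\star(0,\cdot)\in C^{r-2}(B_{\delta_0}(Y_u(0)),(H_\Euler^k)^2)$. Proposition \ref{time-dependent-dichotomy}(2) shows $U(0):X_u\to Y_u(0)$ is a bounded isomorphism, and Remark \ref{R:non-smooth}, \eqref{E:Lambda-Cr}, gives $\Lambda(0,\rho_0,\cdot)\in C^{r_1}$ into $H^{k-r_1}$. Hence for $r_0\le\min\{r-2,r_1\}$ the map $H$ is $C^{r_0}$ from $B_{\delta_2}(X_u)$ (with the $H^k$ norm) into $(\rho_0,v_0)+H^{k-r_0}\times H_\Euler^{k-r_0}$.

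To compute $\sD H(0)$, I would use \eqref{E:sDzsL}, which says $\sD z^\star(0,0)=I$ on $Y_u(0)$. Then $\sD H(0)=\Lambda_0(0)U(0)=I|_{X_u}$ by Lemma \ref{L:U} at $t=0$.

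To get a submanifold of the weaker space, note that $X_u$ is closed in $H^{k-r_1}$. Its $H^k$ and $H^{k-r_1}$ norms are then equivalent on $X_u$, by the open mapping theorem applied to the identity from $(X_u,H^k)$ to $(X_u,H^{k-r_1})$. So $B_{\delta_2}(X_u)$ is an open set of a closed subspace of the weaker space, and $H$ is $C^{r_0}$ in that topology. Composing with a projection onto $X_u$ along a complement in $H^{k-r_0}$ (which exists since $\bP_u$ extends, or by choosing any complement), the map $\bP\circ(H-(\rho_0,v_0))$ has derivative $I$ at $0$. By the inverse function theorem it is a local $C^{r_0}$ diffeomorphism, so $W^u$ is a $C^{r_0}$ graph over $X_u$ with tangent space $X_u$.

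\textbf{Statement (2).} Continuity of $H$ into $H^k$ follows from the $C^0$ continuity of $\Lambda$ (Remark \ref{R:non-smooth} with $r_1=0$). For injectivity with continuous inverse, I would use the identity $\bP_u\Lambda_0(0)=\Lambda_0(0)\bP_u(0)$ only at the linear level. The cleaner route is to reconstruct $(\bar\rho,\bar v)$ from $H(\bar\rho,\bar v)$. Take the Eulerian point, solve backward via Theorem \ref{T:LWP} and Lemma \ref{L:UnstableS-L} to obtain the unique decaying Lagrangian solution $w$. Its unstable component $\bP_u(0)z(0)=U(0)(\bar\rho,\bar v)^T$ then determines $(\bar\rho,\bar v)=U(0)^{-1}\bP_u(0)z(0)$.

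Uniqueness in Lemma \ref{L:UnstableS-L} gives injectivity. For continuity of the inverse in $H^k$, I would bound $z(0)$ by $\Lambda^{-1}$ at $t=0$: given $(\rho,v)$ on $W^u$, $z_1$ is determined and $z_2=\sD\Psi^{-1}((v-v_0)\circ\Psi)$. Combining this with \eqref{E:composition-1} and the continuity of composition in $H^k$ yields continuity.

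\textbf{Statement (3).} The solution starting at $H(\bar\rho,\bar v)$ is $\Lambda(t,\rho_0,z^\star(t,U(0)(\bar\rho,\bar v)))$ for $t\le0$. Its translate $z^\star(\cdot+t)$ is again a fixed point in $S_\lambda$ with small norm, so $\Lambda(0,\cdot)$ of it lies in $W^u$. This needs relabeling via $u_0(t)$, which gives $\Lambda(t,\rho_0,z)=\Lambda(0,\rho_0,\tilde z)$ with $\tilde z$ obtained by conjugating by $u_0(t)$ and costing $e^{K\mu|t|}$. The bound then follows from $\|z^\star(t)\|\le Ce^{\lambda t}\|U(0)(\bar\rho,\bar v)\|_{H^k}$, the estimate $|\Lambda(t)z|_{H^k}\le Ce^{-K\mu t}\|z\|$, and norm equivalence on $X_u$ to replace the $H^k$ norm by the $H^{k-r_1}$ norm. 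The inverse step uses $\bar\rho,\bar v$ comparable to $(\rho^*,v^*)-(\rho_0,v_0)$ via $\sD H(0)=I$.

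\textbf{Main obstacle.} I expect the hardest step to be the invariance $(\rho(t),v(t))\in W^u(\delta_2)$. Time translation changes the Lagrangian base point from $u_0(0)$ to $u_0(t)$ and the density parameter $\rho_0\circ\cdot$, and it requires reprojection onto $Y_u(0)$ with the factor $e^{K\mu|t|}$. The plan is to absorb that factor using $\lambda>2K\mu$ together with the smallness $\delta_2/C$.
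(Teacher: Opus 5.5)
\textbf{Parts (1) and (3).} These follow the paper. For (1): $H$ is $C^{r_0}$ into $H^{k-r_0}$ by \eqref{E:Lambda-Cr} and \eqref{fixed-point}, $\sD H(0)=\Lambda_0(0)U(0)=I$, the $H^k$ and $H^{k-r_1}$ norms are equivalent on $X_u$ by the open mapping theorem, and the implicit function theorem finishes. For the complement you can take the orthogonal complement of $X_u$ in $H^{k-r_0}$; $X_u$ is closed there because it is closed in $H^{k-r_1}$, whereas ``$\bP_u$ extends'' is not justified.

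For (3) the tool is the conjugated curve $\tilde z_1(t)=\Psi_{\rho_0}^{-1}\big(u_0(t_0)\circ\Psi_{\rho_0}(z_1(t+t_0))\circ u_0(t_0)^{-1}\big)$, with the factor $e^{-K\mu t_0}$ absorbed by $\lambda>K\mu$. Note that $z^\star(\cdot+t_0)$ itself is not a solution, since the Lagrangian system is non-autonomous; only the relabeled curve is. Your ``main obstacle'' about the density parameter disappears because $\rho_0\circ u_0(t_0)=\rho_0$ for the steady density. Hence $\tilde z$ solves \eqref{E:Euler-L-3} with the same $\sigma=\rho_0$, which is exactly how the paper concludes $\tilde z(0)\in W_L^u(\frac1{C_0})$.

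\textbf{Part (2), injectivity.} Your argument via the unique decaying Lagrangian representative, as in Lemma \ref{L:UnstableS-L}, is valid. It is not needed, though, since (1) with $\delta_2$ small already gives injectivity.

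\textbf{Part (2), continuity of $H^{-1}$ in $H^k$: this is a genuine gap.} You claim that for $(\rho,v)\in W^u$, ``$z_1$ is determined'' by inverting $\Lambda(0,\rho_0,\cdot)$, and then recover $z_2=\cP(\rho_0)\big((v-v_0)\circ\Psi_{\rho_0}(z_1)\big)$. But $\Lambda(0,\rho_0,\cdot)$ is not invertible. Any relabeling $\Psi_{\rho_0}(z_1)$ preserving $\rho_0$ leaves $\rho$ unchanged. For constant $\rho_0$ the fibre $\{z:\Lambda(0,\rho_0,z)=(\rho,v)\}$ contains $\big(z_1,\cP(\rho_0)((v-v_0)\circ\Psi_{\rho_0}(z_1))\big)$ for every small $z_1$.

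On $W_L^u(\frac1{C_0})$ the restriction is injective, but only through the backward-in-time characterization. So there is no formula for $z_1$ to which \eqref{E:composition-1} and continuity of composition could be applied. Making your Lemma \ref{L:UnstableS-L} reconstruction continuous would require continuous dependence of the backward Eulerian solution, and of the limiting relabeling $\phi$, uniformly on $(-\infty,0]$. That is not available.

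\textbf{The fix, which is the paper's argument.} Use (1) directly. By the implicit function theorem, $H^{-1}$ is continuous from $W^u(\delta_2)$ with the $H^{k-r_0}$ topology to $X_u$. If $H(a_n)\to H(a)$ in $H^k$, then also in $H^{k-r_0}$, so $a_n\to a$ in the $H^{k-r_0}$ norm on $X_u$. That norm is equivalent to the $H^k$ norm there, so $a_n\to a$ in $X_u$. The same local estimate on $H^{-1}$ is what yields $\|(\bar\rho,\bar v)\|_{H^{k-r_1}}\le C\|(\rho^*,v^*)-(\rho_0,v_0)\|_{H^{k-r_1}}$, which your step (3) relies on.
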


\begin{proof}
Clearly the embedding $(X_u, \|\cdot\|_{H^k}) \to (X_u, \|\cdot\|_{H^{k-r_1}})$ is a bounded linear bijective operator. The latter is also a Hilbert space when $X_u$ is a closed subspace of $H^{k-r_1}$. So according to the Open Mapping Theorem, $(X_u, \|\cdot\|_{H^k})$ and $(X_u, \|\cdot\|_{H^{k-r_1}})$ are isomorphic Hilbert spaces, which is also true if $k-r_1$ is replaced by any $m\in [k-r_1, k]$. Therefore, viewing $U(0)$ as the composition of $U(0) \in \cL \big((X_u, \|\cdot\|_{H^k}), (H_\Euler^k)^2\big)$ with the inverse of the embedding above, we have $U(0) \in \cL\big((X_u, \|\cdot\|_{H^{k-r_1}}), (H_\Euler^k)^2 \big)$. Consequently,  \eqref{E:Lambda-Cr} 
and \eqref{fixed-point} imply 
\[
H \in C^{r_0} \big(B_{\delta_2} (X_u), (\rho_0, v_0) + H^{k-r_0} (\Omega, \R)  \times H_\Euler^{k-r_0} ) \big), \quad H(0)=(\rho_0, v_0)^T. 
\]
Moreover, from \eqref{E:Lambda0}, \eqref{E:sDzsL}, and Lemma \ref{L:U}, we have $\sD H (0) = I \in \cL((X_u, \|\cdot\|_{H^{k-r_1}}), H^{k-r_1} (\Omega, \R)  \times H_\Euler^{k-r_1}) $, or more precisely the embedding of $X_u$ into $H^{k-r_0} (\Omega, \R)  \times H_\Euler^{k-r_0}$. By the Implicit Function Theorem, for small $\delta_2>0$, the image  $W^u(\delta_2)$ of $H$ is a $C^{r_0}$ submanifold in $H^{k-r_0} (\Omega, \R)  \times H_\Euler^{k-r_0}$ tangent to $X_u$ at $(\rho_0, v_0)$. 

From the above analysis and Remark \ref{R:non-smooth}, $H : B_{\delta_2} (X_u) \to W^u(\delta_2) \subset (\rho_0, v_0) + H^{k} (\Omega, \R)  \times H_\Euler^{k}$ is bijective and continuous. For statement (2) we only need to show that $H^{-1}: W^u (\delta_2) \to X_u$ is also continuous when $W^u(\delta_2)$ is equipped with the subset topology from $H^k$. Let $(\rho_n, v_n) = H(z_{u0}^{(n)}) \to (\rho, v) = H (z_{u0})$ in $H^k$ as $n \to \infty$. It is obvious that the convergence also holds in $H^{k-r_1}$. From statement (1), we have $z_{u0}^{(n)} \to z_{u0}$ in $H^{k-r_1}$ which yields the convergence $z_{u0}^{(n)} \to z_{u0}$ in $H^{k}$ as the $H^k$ and $H^{k-r_1}$ topology are equivalent on $X_u$. Hence the continuity of $H^{-1}: W^u (\delta_2) \to X_u$ follows. 

To prove statement (3), let $(\bar \rho, \bar v)^T = H^{-1} (\rho^*, v^*)\in  B_{\delta_2}(X_u)$ and 
\[
z(t) = (z_1(t), z_2(t))^T  = z^\star \big(t, U(0)( \bar \rho, \bar v )^T \big), \quad (\rho(t), v(t))^T  = \Lambda(t, \rho_0, z(t)),  
\]
for $t\le 0$. Clearly $ (\rho(t), v(t))$ is the solution to \eqref{stratified-euler} with initial value $ (\rho^*, v^*) \in W^u(\delta_2)$ as $z(\cdot ) \in B_{\delta_1} (S_\lambda)$ is the solution to (\ref{stratified-euler-taylor}) with initial value $z(0) = z^\star (0, U(0)( \bar \rho, \bar v)^T) \in W_L^u$.
By the exponential weight in the definition of $S_\lambda$, Lemma \ref{L:U}, the equivalence of the $\|\cdot\|_{H^k}$ and $\|\cdot\|_{H^{k-r_1}}$ on $X_u$, the local smooth estimates on $H^{-1}: W^u(\delta_2) \to X_u$ due to the above analysis based on the Implicit Function Theorem, and (\ref{fixed-point-initial}), we have for $t\le 0$, 
\begin{equation}\label{z-decaying-rate} \begin{split} 
& \|z(t)\|_{H^k} 
\leq e^{\lambda t}\|z\|_\lambda \leq C e^{\lambda t}\| U(0)( \bar \rho, \bar v)^T\|_{H^k} \leq C e^{\lambda t}\| ( \bar \rho, \bar v) \|_{H^k} \\
\leq & C e^{\lambda t}\| ( \bar \rho, \bar v) \|_{H^{k-r_1}} 
\le  C e^{\lambda t} \|(\rho^*, v^*)-(\rho_0, v_0)\|_{H^{k-r_1}}.
\end{split} \end{equation}
While $\Lambda(t, \rho_0, \cdot): H^k \to H^k$ is only $C^0$ (see Remark \ref{R:non-smooth}), we rewrite
\begin{align*}
\rho(t) - \rho_0 
= & \rho_0 \circ \Psi_{\rho_0} (z_1)^{-1} \circ u_0(t)^{-1} - \rho_0 \circ u_0(t)^{-1} \\
=& \Big(  \int_0^1 \big( (\nabla \rho_0) \circ \Psi_{\rho_0} (\tau z_1) \big) \cdot \sD \Psi_{\rho_0} (\tau z_1) z_1 d\tau \Big)
\circ \Psi_{\rho_0} (z_1)^{-1} \circ u_0(t)^{-1},
\end{align*}
then the $H^k$ norm of $( \rho(t), v(t))^T$ can be estimated using the formula \eqref{E:transform} of $\Lambda$ and \eqref{z-decaying-rate} as 
\be \label{E:temp-14.5}
\|(\rho(t), v(t))-(\rho_0, v_0)\|_{H^k} 
\le C e^{-K\mu t} \|z(t)\|_{H^k} \le C^{(\lambda - K\mu)t} \|(\rho^*, v^*)-(\rho_0, v_0)\|_{H^{k-r_1}}, 
\quad t\le 0. 
\ee
This proves the desired exponential decay estimates. 

To show the local invariance of $W^u(\delta_2)$, for $t_0, t \leq 0$, let   
$\tilde{z}(t) = (\tilde{z}_1(t),\partial_t\tilde{z}_1(t))$ where
\be \label{E:temp-15} \begin{split}
\tilde{z}_1(t) = &\Psi_{\rho_0}^{-1}\big(u_0(t_0) \circ \Psi_{\rho_0}(z_1(t+t_0)) \circ (u_0(t_0))^{-1} \big)\\
= & \cP(\rho_0) \big(u_0(t_0) \circ \Psi_{\rho_0}(z_1(t+t_0)) \circ (u_0(t_0))^{-1} -\id_\Omega \big).
\end{split} \ee
Proposition \ref{local-coordinate} was used to derive the above second equality. 
From Proposition \ref{local-coordinate}, \eqref{E:composition-1}, (ED), and \eqref{u0-hl}, we have 
\begin{align*}
\|\tilde z_1(t) & \|_{H^k} \le  C \big\| u_0(t_0) \circ \Psi_{\rho_0}(z_1(t+t_0)) \circ (u_0(t_0))^{-1} - \id_\Omega \big\|_{H^k} \\
\le & C \int_0^1\Big\| \Big( \big( D u_0(t_0) \circ \Psi_{\rho_0} (\tau z_1(t+t_0) ) \big) \cdot \big( \sD \Psi_{\rho_0} (\tau z_1(t+t_0) ) z_1(t+t_0) \big) \Big)    \circ (u_0(t_0))^{-1} \Big\|_{H^k} d\tau  \\
\le & C e^{-K\mu t_0} \| z_1(t+t_0)\|_{H^k}. 
\end{align*}
Using the second equality of \eqref{E:temp-15},  $\p_t \tilde z_1(t)$ can be estimated similarly. Along with  \eqref{z-decaying-rate}, we have 
\be \label{E:temp-16}
\|\tilde z(t) \|_{H^k} \le C e^{-K\mu t_0} \| z (t+t_0)\|_{H^k} \le C e^{\lambda t + (\lambda -K\mu) t_0} \|(\rho^*, v^*)-(\rho_0, v_0)\|_{H^{k-r_1}}. 
\ee
Let $u(t) = u_0(t) \circ \Psi_{\rho_0} (z_1(t))$, which is a Lagrangian map corresponding to $v(t)$, and 
$$
\tilde{u}(t) = u_0(t) \circ \Psi_{\rho_0}(\tilde{z}_1(t)) = u_0(t+t_0) \circ \Psi_{\rho_0}(z_1(t+t_0)) \circ (u_0(t_0))^{-1} = u(t+t_0) \circ (u_0(t_0))^{-1}.
$$
Direct computation yields  
$\tilde{u}_t(t)= v(t+t_0) \circ \tilde{u}(t) $, so $\tilde{u}(t)$ is a Lagrangian map associated with the solution $(\rho(t+t_0), v(t+t_0))$. Since $z(t)$ is a solution to (\ref{stratified-euler-taylor}), we have 
\[
\rho (t) \circ u(t)= \rho_0 \implies \rho(t+t_0) \circ \tilde u(t) = \rho(t+t_0) \circ u(t+t_0) \circ u_0(t_0)^{-1} = \rho_0 \circ u_0(t_0)^{-1} = \rho_0. 
\]
Therefore $\tilde z(t)$ is also a solution to \eqref{E:Euler-L-3} with $\sigma =\rho_0$, or equivalently (\ref{stratified-euler-taylor}), and $ (\rho(t+t_0), v(t+t_0)) = \Lambda (t, \rho_0, \tilde z(t))$. By taking $\delta_2$ such that $C \delta_2 \le \min \{\delta_0, \delta_1\}$, estimate \eqref{E:temp-16} implies $\tilde z(\cdot) = z^\star (\cdot, \bP_u(0) \tilde z(0)) \in B_{\delta_1} (S_\lambda)$ with $\tilde z(0) \in W_L^u (\tfrac 1{C_0})$. As $\|(\rho^*, v^*)-(\rho_0, v_0)\|_{H^{k}} \le \delta_2/C$, by taking $C$ reasonably large and using \eqref{E:temp-14.5} and that $H$ is locally diffeomorphic, we have $(\rho(t_0), v(t_0)) = \Lambda (0, \rho_0, \tilde z(0)) \in W^u(\delta_2)$. Since $t_0\le 0$ is arbitrary, the proof of the local invariance is complete. 
%
\end{proof}



Finally we are ready to prove the main result Theorem \ref{main-theorem}. 

\begin{proof}[Proof of Theorem \ref{main-theorem}] 
Let $K_0 =K+1$ and $\lambda_0 =  \lambda_u - K_0 \mu$, where $K$ is given in Lemma \ref{L:UnstableS-L}, Proposition \ref{unstable-manifolds-in-eulerian-coordinates},and Theorem \ref{T:UM-L}. We shall see that for some sufficiently small $\delta_2>0$ determined by $k+r, \rho_\pm, \Omega$, $C_*, \mu$,  $\mu_0$, $\lambda_0 - \lambda_{cs}$, $|\rho_0|_{Z^{k+r}}$, $|D v_0|_{Z^{k+r-1}}$, and $|\nabla p_0|_{Z^{k+r-1}}$, the set $W^u = \overline{W^u (\frac {\delta_2}2)}$, with $W^u(\frac {\delta_2}2)$ defined above, satisfies all the properties in Theorem  \ref{main-theorem}. 
In fact, Proposition \ref{unstable-manifolds-in-eulerian-coordinates}(1)(2) implies that $W^u$ is a topological submanifold in $(\rho_0, v_0) + H^k (\Omega, \R)  \times H_\Euler^k$ and a $C^{r_0}$ submanifold in $(\rho_0, v_0) + H^{k-r_0} (\Omega, \R)  \times H_\Euler^{k-r_0}$ with $T_{(\rho_0, v_0)} W^u = X_u$. 
Theorem  \ref{main-theorem}(2) is also a direct corollary of Proposition \ref{unstable-manifolds-in-eulerian-coordinates}(3). 
Finally, given any solution $(\rho(t), v(t), p(t))$, $t \le 0$, to \eqref{stratified-euler} satisfying \eqref{E:ExpDecay} for some $\lambda\in \mathcal{I}$, let $t_0\le 0$ and consider $\big(\rho(t+t_0), v(t+t_0), p(t+t_0)\big)$ which is also a solution satisfying \eqref{E:ExpDecay} for the same $\lambda$. For any $\ep>0$, by taking $-t_0\gg 1$, \eqref{E:UnstableS-1} holds and Lemma \ref{L:UnstableS-L} applies to yield that there exists a unique solution $z(t) = (z_1(t), \p_t z_1(t))$ to \eqref{stratified-euler-taylor} such that \eqref{E:U-solu-1} is satisfied. Subsequently,  Theorem  \ref{main-theorem}(3) follows from Theorem \ref{T:UM-L}(3) and \eqref{fixed-point-initial}. 
\end{proof}

\section{Unstable manifolds of steady flows in 2-dim domains} \label{S:2D}

In this section, we consider two types of equilibria of the 2-dim stratified Euler equation \eqref{stratified-euler}: steady flows with small density variation on 2-dim bounded domains and shear flows on $\BBT\times \R$ exhibiting the Rayleigh-Taylor instability with a background flow. As in Section 3 and 4 in \cite{lin-zeng-manifold}, we focus on the linear exponential dichotomy condition (ED), which is the major assumption required in 
Theorem \ref{main-theorem} for them to be applied to yield the nonlinear unstable and stable manifolds.

\subsection{Unstable manifolds of steady flows with small density variation on 2-dim bounded domains} 

In this subsection, we consider the 2-dim domains 
\be \label{E:Omega-2}
\Omega =  \mathbb{T}^{d_1} \times \Omega_2, \ \text{ where } \ \Omega_2 \subset\subset \R^{d_2}, \quad d_1 + d_2 = 2; \quad \text{ and } \Omega \text{ is simply connected if }  d_2=2. 
\ee
Here $\Omega$ is a bounded interval if $d_2=1$ or a smooth bounded domain in $\R^2$ if $d_2=2$. Our first result is that small density stratification and relatively strong unstable eigenvalues yield the exponential dichotomy (ED). 

\begin{proposition} \label{P:ED-1}
Assume \eqref{E:Omega-2} and 
\begin{itemize} 
\item an equilibrium $(\rho_0, v_0) \in H^{k+1} (\Omega, \R) \times H_\Euler^{k+2}$, $k> 2$, of the Euler equation \eqref{stratified-euler} satisfies  $\rho_- \triangleq \inf \rho_0>0$,
\item its Lagrangian map $u_0(t)$ (defined in \eqref{lagrangian-coordinate-map-0}) satisfies \eqref{u0-L-infinity}, \eqref{E:Lyap-E}, with parameters $\mu$,  $\mu_0$, and $C_*$,
\item and the linearization operator $L$ defined in \eqref{linearization} at $(\rho_0, v_0)$  has an eigenvalue $\lambda_0$, Re$\lambda_0>k \mu$, along with an eigenfunction $(\rho, v)\in H^1(\Omega, \R) \times H_\Euler^1$,
\end{itemize}
then there exists $\ep>0$ depending on $\Omega$, $k$, $\|v_0\|_{H^{k+2}}$, $C_*$, $\mu$,  $\mu_0$, and $\rho_-$ such that if 
\be \label{E:temp-16.5} 
\|\nabla \rho_0\|_{H^{k}}^{\frac 12} < \min \{1, \ep (\text{Re} \lambda_0 - k\mu) \},  
\ee
then the linear exponential dichotomy (ED) holds with some $X_u$ and $X_{cs}$ such that $\dim X_u< \infty$ and $\lambda_u > k \mu$. 
\end{proposition}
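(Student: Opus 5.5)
The plan is to treat the stratified linearization $L$ as a perturbation of a homogeneous-type operator whose essential growth bound is controlled by $\mu$, and to use the unstable eigenvalue $\lambda_0$ to split off a finite-dimensional unstable part. In 2D it is convenient to pass to vorticity-type variables $(\delta\rho, \omega)$, $\omega = \nabla^\perp\cdot\delta v$, since the stream function is then recovered by an elliptic problem on the simply connected $\Omega$ (or on $\BBT\times$ interval, where a finite-dimensional harmonic/flux component must be handled separately). The linearized density equation becomes $\p_t\delta\rho + v_0\cdot\nabla\delta\rho = -\delta v\cdot\nabla\rho_0$. The linearized vorticity equation is $\p_t\omega + v_0\cdot\nabla\omega + \delta v\cdot\nabla\omega_0 = $ baroclinic terms of the form $\nabla^\perp\cdot(\delta\rho\,\rho_0^{-2}\nabla p_0)$ plus terms involving $\nabla\rho_0$ and $\nabla\delta p$. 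All of these carry a factor $\nabla\rho_0$, except for $\nabla\delta\rho\times\nabla p_0$, which loses a derivative on $\delta\rho$.

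\textbf{Step 1: essential growth bound.} Write $L = L_0 + B + R$ on $H^k\times H^{k-1}$ (density, vorticity). Here $L_0$ is transport by $v_0$, whose group satisfies $\|e^{tL_0}\|\le Ce^{k\mu|t|}$ on $H^k$ by \eqref{u0-L-infinity}, \eqref{u0-hl} and \eqref{E:composition-1}. The operator $B$ is compact: $\omega\mapsto\delta v\cdot\nabla\omega_0$ gains a derivative via Biot--Savart, and the coupling terms with smooth coefficients are compact once the derivative loss is handled. The remainder $R$ is bounded with $\|R\|\lesssim\|\nabla\rho_0\|_{H^k}$. The derivative-losing term $\nabla\delta\rho\times\nabla p_0$ is handled by a change of unknown: along the transport, $\nabla\delta\rho$ is generated only by $-\delta v\cdot\nabla\rho_0$, which carries a small factor. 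I would therefore introduce a weighted norm $\|\delta\rho\|_{H^k}\,\|\nabla\rho_0\|^{-1/2} + \|\omega\|_{H^{k-1}}$. With this norm the off-diagonal couplings have size $O(\|\nabla\rho_0\|_{H^k}^{1/2})$, which explains the square root in \eqref{E:temp-16.5}. Consequently the essential growth bound of $e^{tL}$ is at most $k\mu + C\|\nabla\rho_0\|^{1/2}_{H^k}$, and by \eqref{E:temp-16.5} this is less than $\mathrm{Re}\lambda_0$ provided $\ep$ is small. This follows from the stability of the essential growth bound under compact perturbations combined with the Lumer--Phillips/bounded perturbation estimate.

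\textbf{Step 2: upgrading the eigenvalue.} The given eigenfunction lies only in $H^1$. Since $\mathrm{Re}\lambda_0 > k\mu$ exceeds the essential growth bound in each $H^m$, $m\le k$, I would apply a bootstrap of the form $(\lambda_0 - L_0 - R)^{-1}B$: the eigenvalue equation gives $(\rho,v)\in H^{m+1}$ from $H^m$, because the resolvent of the transport part is bounded on $H^{m}$ for $\mathrm{Re}\lambda>m\mu$. Thus $\lambda_0\in\sigma(L|_{H^k})$. The spectrum of $L$ in $\{\mathrm{Re}\lambda>\lambda_{cs}'\}$ is then a finite set of eigenvalues of finite multiplicity, with $\lambda_{cs}'$ lying strictly between the essential bound and $\mathrm{Re}\lambda_0$. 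Let $X_u$ be the Riesz spectral subspace of the eigenvalues with $\mathrm{Re}\lambda\ge\lambda_u$, where $\lambda_u\in(k\mu,\mathrm{Re}\lambda_0]$ is chosen to lie in a spectral gap, and let $X_{cs}$ be the complementary invariant subspace. Finite dimensionality gives the backward estimate on $X_u$ immediately.

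\textbf{Main obstacle.} The hard part is the forward bound $\|e^{tL_{cs}}\|\le Me^{\lambda_{cs}t}$, which is a spectral mapping statement. I would obtain it from the quasi-compactness in Step 1 rather than from a resolvent bound: $e^{tL}$ is a compact perturbation of a group whose growth is below $\lambda_{cs}$, so its spectral radius outside the disk $e^{\lambda_{cs}t}$ consists only of $e^{t\lambda}$ for the finitely many eigenvalues $\lambda$. This implies the dichotomy. The delicate technical point is proving that $e^{tL}-e^{t(L_0+R)}$ is compact despite the derivative loss in the baroclinic term. For this I would verify compactness through the Duhamel formula using the weighted norm and the compactness of $\Omega$, which is exactly where boundedness of the domain in \eqref{E:Omega-2} is used.
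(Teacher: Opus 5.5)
Your proposal is correct and follows essentially the same route as the paper. The shared steps are: vorticity--momentum variables on $Y_l=H^l\times H^{l-1}\times\R^{d_1}$; the weighted norm $a\|\rho\|_{H^l}+\|\omega\|_{H^{l-1}}$ with $a=(\eta+\|\nabla\rho_0\|_{H^k})^{-1/2}$, which makes both the $O(1)$ baroclinic coupling $\frac{\nabla p_0}{\rho_0^2}\times\nabla\rho$ and the coupling $\nabla\rho_0\cdot v$ of size $O(a^{-1})$; compactness of $\omega\mapsto v\cdot\nabla\omega_0$ plus Duhamel to bound the essential growth by $l\mu+Ca^{-1}$; a Riesz projection; and a Neumann-series bootstrap of the $H^1$ eigenfunction through the transport resolvent.

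One clarification: in $H^k\times H^{k-1}$ the term $\nabla\delta\rho\times\nabla p_0$ loses no derivative---it is bounded but not compact, so it belongs in the small part $R$ rather than in $B$---and the Duhamel compactness step then involves only the genuinely compact $B$, so it is routine rather than the main obstacle.
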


The proposition states that the existence of 
unstable eigenvalues satisfying \eqref{E:temp-16.5} yields the linear exponential dichotomy (ED) with finite dimensional $X_u \subset H^k(\Omega, \R) \times H_\Euler^k$, which, in turn, implies the closedness of $X_u \subset L^2$. Hence Theorem \ref{main-theorem} applies provided that the first assumption in \eqref{E:lambda-1} is verified. Sometimes eigenfunctions are first identified in rough spaces like $H^1(\Omega, \R) \times H_\Euler^1$ as in the above assumptions, \eqref{E:temp-16.5} ensures that they belong to $H^k(\Omega, \R) \times H_\Euler^k$ automatically. 

\vspace{0.05in}
\noindent $\bullet$ {\bf Vorticity formulation.} 
The proof of the proposition will be carried out in the vorticity formulation of the Euler equation. 
Given a velocity field  $v(x)$ and density $\rho(x)$ 
on $\Omega$, let 
\[
\omega = \p_{x_1} v^2 - \p_{x_2} v^1, \quad \vec{m} = (m_1, \ldots, m_{d_1})^T, \ \text{ where } \ m_j = \int_\Omega \rho v^j dx,
\; j=1, \ldots, d_1, 
\]
denote its vorticity and the momenta in each of the periodic directions. For solutions to the Euler equation \eqref{stratified-euler}, it is standard to verify the conservation of momenta and to compute the evolution equation of the vorticity
\begin{subequations} \label{E:Euler-V}
\be \label{E:Vor-E} 
\vec{m}_{t} = 0,  \quad \omega_t + v \cdot \nabla \omega + \nabla \big(\frac 1\rho\big) \times \nabla p=0, 
\ee
where  \eqref{E:g} was also used. 
The velocity $v$ can be recovered by $\omega$ and $\vec{m}$ as 
\be \label{E:Vor-Vel}
v = J \nabla   \Delta^{-1}  \omega + \sum_{j=1}^{d_1} \frac {m_j - \int_\Omega \rho (J \nabla   \Delta^{-1}  \omega)_j dx}{\int_\Omega \rho dx} \Ve_j \triangleq \cB(\rho, \omega, \vec{m}), \; \text{ where } \ J = \begin{pmatrix}  0 & -1  \\ 1 & 0 \end{pmatrix}, 
\ee 
where $\Delta^{-1} \omega$ a.) has zero boundary value if $\p \Omega \ne \emptyset$ or b.) zero mean value if $\Omega = \BBT^2$. Unlike the Euler equation with homogeneous density, here the pressure also appears in the vorticity evolution \eqref{E:Vor-E} given by 
\be \label{E:pressure-2}
\frac 1\rho \nabla p = - \big(I-\cP(\rho)\big) (v \cdot \nabla v + g\Ve_d),
\ee
\end{subequations}
where $\cP(\rho)$ was given in Lemma \ref{hodge}. 
Therefore \eqref{E:Vor-E}, along with \eqref{E:Vor-Vel}, \eqref{E:pressure-2}, and the density transport equation, gives an equivalent system in terms of the unknown $(\rho, \vec{m}, \omega)$ to the Euler equation \eqref{stratified-euler}. 

\begin{remark} \label{R:Vor-Vel}
If $d_2=2$ and $\Omega$ is  not simply connected, for each connected component $\cC_j$, $j=1, \ldots$ of the inner boundary of $\Omega$, let $c_j=\int_{\cC_j} v \cdot d\vec{l}$ be the circulation along $\cC_j$ and then $v$ can be recovered by $\omega$ and $c_j$. See, for example, \cite{LWZ19} for some related computations. In particular, $c_j$ is a constant of motion of the Euler equation \eqref{stratified-euler} if $\rho$ is a constant along $\cC_j$, the latter of which is also a property preserved by the Euler equation. If $\rho$ is not a constant along any $\cC_j$, then in order to recover the velocity, the evolution of either the circulations, or the boundary values of the stream function, {\it etc.} have to be coupled to the evolution of the vorticity. 
\end{remark}

\begin{proof}[Proof of Proposition \ref{P:ED-1}]
To prove the proposition, we may assume $\| \nabla \rho_0\|_{H^k} \le \frac 12$. 
Let $p_0$ be the pressure associated with the equilibrium $(\rho_0, v_0)$ given by \eqref{E:pressure-2} and  $(\omega_0, \vec{m}_0)$ the associated equilibrium vorticity and momenta which satisfies 
\[
\| \omega_0\|_{H^{k+1}}, \, |\vec{m}_0|, \, \|\nabla p_0\|_{H^{k+1}} \le C, 
\]
for some $C>0$ depending on $\Omega$, $k$, $\|v_0\|_{H^{k+2}}$, and $\rho_-$. 
It is clear that the linearization \eqref{linearization} of \eqref{stratified-euler} is conjugate to that of \eqref{E:Euler-V} and thus we shall focus on the latter. The linearization in the vorticity formulation is given by 
\be \label{E:LEuler-V} \begin{split}
\begin{pmatrix} \rho \\ \omega \\ \vec{m} \end{pmatrix}_t = & \begin{pmatrix} - v_0 \cdot \nabla \rho \\ - v_0 \cdot \nabla \omega   \\ 0 \end{pmatrix} 
+ \begin{pmatrix} 0 \\ - v \cdot \nabla \omega_0   \\ 0 \end{pmatrix}
+ \begin{pmatrix} - \nabla \rho_0 \cdot v  \\ - \frac {\nabla p_0}{\rho_0^2} \times \nabla \rho + \frac {1}{\rho_0^2} \nabla \rho_0\times \nabla p - \frac {2 \rho}{\rho_0^3} \nabla \rho_0 \times \nabla p_0 \\ 0 \end{pmatrix} \\ 
\triangleq & \tilde L_1 (\rho, \omega, \vec{m})^T + \tilde L_2 (\rho, \omega, \vec{m})^T + \tilde L_3 (\rho, \omega, \vec{m})^T    \triangleq \tilde L (\rho, \omega, \vec{m})^T, 
\end{split} \ee
where $v$ is  determined by the linearization of \eqref{E:Vor-Vel}
\[
v = \cB(\rho_0, \omega_0, \vec{m}) + \cB(\rho_0, \omega, \vec{m}_0) + \sD_\rho \cB(\rho_0, \omega_0, \vec{m}_0) \rho
\] 
and the linear transformation $(\rho, \omega, \vec{m}) \to \frac 1{\rho_0}\nabla p$ is given by linearizing \eqref{E:pressure-2} or equivalently \eqref{E:LPressure}.

To analyze \eqref{E:LEuler-V},  let 
\[
Y_l = H^{l} (\Omega, \R) \times H^{l-1} (\Omega, \R) \times \R^{d_1}, \;  l \in \mathbb{N}, 
\]
and equip $Y_l$ with the equivalent norm
\[
|(\rho, \omega, \vec{m})^T|_{a, l} \triangleq \big( a^2\|\rho\|_{H^l}^2 + \|\omega\|_{H^{l-1}}^2 + |\vec{m}|^2\big)^{\frac 12}, \quad a = (\eta+ \|\nabla \rho_0\|_{H^{k}})^{-\frac 12}\ge 1, 
\]
where the precise value of $\eta \in (0, \frac 12)$ will be determined later. Here $\eta$ is included solely for the possible case of $\nabla \rho_0=0$. 
From the assumptions, the standard elliptic theory, \eqref{E:Vor-Vel}, \eqref{E:pressure-2}, and \eqref{E:LPressure}, there exists $C>0$ decided by $\Omega$, $k$, $\|v_0\|_{H^{k+2}}$, and $\rho_-$, such that, for $k+1 \ge l \in \mathbb{N}$, we have 
\[
\|v\|_{H^l} \le C( \|\rho\|_{L^1} + \|\omega\|_{H^{l-1} } + | \vec{m}|),  \quad \| \nabla p\|_{H^{l}} \le C( \|\rho\|_{H^l} +  \|\omega\|_{H^{l-1} } + | \vec{m}|).
\]
Subsequently, it is straightforward to obtain for $k \ge l \in \mathbb{N}$, 
\be \label{E:temp-17}
\| \tilde L_2\|_{\cL (Y_{l}, Y_{l+1} )} \le C, \quad \| \tilde L_3 \|_{\cL (Y_{l} )} \le C ( a^{-1} + a \|\nabla \rho_0\|_{H^{k}})\le C a^{-1}.
\ee
Using $u_0(t)$, one may verify 
\be \label{E:temp-17.5}
e^{t\tilde L_1} (\rho, \omega, \vec{m})^T = (\rho, \omega, \vec{m})^T \circ u_0(t)^{-1} \implies \| e^{t\tilde L_1} \|_{\cL(Y_l)} \le C e^{l\mu|t|},
\ee
where \eqref{u0-L-infinity}, \eqref{u0-hl}, and \eqref{E:composition-1} are used and $C$ depends on $\Omega$, $k$, $\|v_0\|_{H^{k+2}}$, $C_*$, $\mu$,  $\mu_0$, and $\rho_-$. Adding the bounded perturbation term $\tilde L_3$, we have 
\[
\|e^{t(\tilde L_1 + \tilde L_3)} \|_{\cL(Y_l)} \le C e^{(l\mu + Ca^{-1} 
) |t|}.
\]
Since $\tilde L_2$ is a compact operator on $Y_l$, by a straightforward argument using the Duhamel's formula, $e^{t\tilde L}$ is a compact perturbation of $e^{t(\tilde L_1 + \tilde L_3)}$. For any 
\be \label{E:temp-18}
\alpha \in (l \mu + C a^{-1} 
, \text{Re}\lambda_0 ), \quad { if } \ \text{Re}\lambda_0 >l \mu + C a^{-1},  
\ee
the spectra set 
\[
\sigma_u \triangleq \{ \lambda \in \sigma (\tilde L) \mid \text{Re}\lambda\ge \alpha\} 
\]
is finite. 
Hence there exist $\lambda_u, \lambda_{cs} \ge 0$ such that   
\[
\sup \{ \text{Re}\lambda \mid \lambda \in \sigma (\tilde L) \setminus \sigma_u\} < \lambda_{cs} <\lambda_u < \inf \{ \text{Re}\lambda \mid \lambda \in \sigma_u\}. 
\]
Let $\Gamma \subset \sigma(\tilde L)^c$ be a smooth contour  such that the intersection of its interior with $\sigma(\tilde L)$ is exactly $\sigma_u$. Define 
\[
\Pi_u = \frac 1{2\pi i} \oint_\Gamma (\lambda I - \tilde L)^{-1} d\lambda, \quad \tilde X_{u, l} = \Pi_u Y_l, \quad \tilde X_{cs, l} = \ker \Pi_u \subset Y_l. 
\]  
From the standard spectral theory and the compactness of $\tilde L_2$ (see for example \cite{ko95}), the following hold for $k \ge l \in \mathbb{N}$.
\begin{itemize} 
\item $\Pi_u \in \cL(Y_l)$, $\Pi_u^2=\Pi_u$, and $\Pi_u \tilde L = \tilde L \Pi_u$, which imply that $\tilde X_{u, l}$ and $\tilde X_{cs, l}$ are closed invariant subspaces of $Y_l$. 
\item $\dim \tilde X_{u, l}< \infty$, and $\tilde X_{u, l} $ consists of eigenfunctions and generalized eigenfunctions of eigenvalues in $\sigma_u$. 
\item There exists $M>0$ such that  
\[
\|e^{t\tilde L} |_{ \tilde X_{u, l}} \|_{\cL( \tilde X_{u, l})} \le M e^{\lambda_u t}, \; \forall t\le 0; \quad \|e^{t\tilde L} |_{ \tilde X_{cs, l}} \|_{\cL( \tilde X_{cs, l})} \le M e^{\lambda_{cs} t}, \; \forall t\ge 0.  
\]
\end{itemize}

Next we show that the eigenfunction $(\rho, v) \in H^1$  of the eigenvalue $\lambda_0$ satisfying  \eqref{E:temp-16.5} actually belongs to $H^{k} (\Omega, \R) \times H_\Euler^{k}$.   
From \eqref{E:LPressure}, we have that the associated linear pressure $p \in H^2(\Omega, \R)$. Clearly the corresponding $(\rho, \omega, \vec{m}) \in Y_1$ and satisfy 
\[
\lambda_0 \begin{pmatrix} \rho \\ \omega \\ \vec{m} \end{pmatrix} = \tilde L \begin{pmatrix} \rho \\ \omega \\ \vec{m} \end{pmatrix} = \begin{pmatrix} - v_0 \cdot \nabla \rho - \nabla \rho_0 \cdot v \\ - v_0 \cdot \nabla \omega  - v \cdot \nabla \omega_0- \frac {\nabla p_0}{\rho_0^2} \times \nabla \rho + \frac {1}{\rho_0^2} \nabla \rho_0\times \nabla p - \frac {2 \rho}{\rho_0^3} \nabla \rho_0 \times \nabla p_0 \\ 0 \end{pmatrix}.
\]
From Re$\lambda_0 > 0$ and \eqref{u0-L-infinity}, it holds  
\[
\vec{m}=0, \quad \lim_{t \to -\infty} e^{\lambda_0 t} \| \rho \circ u_0(t) \|_{L^2}= \lim_{t \to -\infty} e^{\lambda_0 t} \| \omega \circ u_0(t) \|_{L^2} =0.
\] 
Differentiating $e^{\lambda_0 t} \rho \circ u_0(t)$ and $e^{\lambda_0 t} \omega \circ u_0(t)$ (in the distribution sense) and using \eqref{E:LEuler-V}, we obtain  
\be \label{E:temp-18.1}
\begin{pmatrix} \rho \\ \omega \\ \vec{m} \end{pmatrix} - \int_{-\infty}^0 e^{\lambda_0 \tau}  \left(\tilde L_3 \begin{pmatrix} \rho \\ \omega \\ \vec{m} \end{pmatrix} \right) 
\circ u_0(\tau) d\tau = \int_{-\infty}^0 e^{\lambda_0 \tau} \left(\tilde L_2 \begin{pmatrix} \rho \\ \omega \\ \vec{m} \end{pmatrix} \right) 
\circ u_0(\tau) d\tau.
\ee
Since Re$\lambda_0 > 2\mu$, from the above equality, \eqref{u0-L-infinity}, \eqref{u0-hl}, \eqref{E:composition-1}, and \eqref{E:temp-17} we have a priori estimate 
\[
\| (\rho, \omega, \vec{m}) \|_{Y_2} \le \frac C{ \text{Re}\lambda_0 - 2\mu}  \big( \| (\rho, \omega, \vec{m}) \|_{Y_1} + C a^{-1} 
(\rho, \omega, \vec{m}) \|_{Y_2} \big).
\]
If   
\[
C a^{-1}= C \big( \eta+  \|\nabla \rho_0\|_{H^{k}})^{\frac 12} < \text{Re}\lambda_0 - 2\mu,
\]
which is possible when $\ep < \frac 1{2C}$ and \eqref{E:temp-16.5} is satisfied, 
then \eqref{E:temp-18.1} can be viewed as a linear equation of $(\rho, \omega, \vec{m})$ in the form of $(I+ S) (\rho, \omega, \vec{m})= f$ with $\|S\|_{\cL (Y_2)} <1$ and $f \in Y_2$ for $(\rho, \omega, \vec{m}) \in Y_1$. Hence we obtain $(\rho, \omega, \vec{m}) \in Y_2$ which in turn implies $(\rho, \omega, \vec{m}) \in \tilde X_{u, 2}$. 
Repeating this argument we obtain $(\rho, \omega, \vec{m}) \in \tilde X_{u, k}$ if \eqref{E:temp-16.5} holds for some small $\ep>0$. 

The existence of an eigenvalue $\lambda_0$ satisfying \eqref{E:temp-16.5}  for  $\ep = \frac 1{2C}$ implies that there exists $\alpha$ such that \eqref{E:temp-18} holds for $l=k$ and $\sigma_u \ne \emptyset$ as $\lambda_0 \in \sigma_u$. As the linearized Euler equation at $(\rho_0, v_0)$ in the velocity formulation \eqref{linearization} and in the vorticity formulation \eqref{E:LEuler-V} are conjugate, $\tilde X_{u, k}$ and $\tilde X_{cs, k}$  yield the invariant splitting $H^k (\Omega, \R) \times H_\Euler^k =X_u \oplus X_{cs}$ desired in (ED). 
This completes the proof of statement (1). 
\end{proof}

According to the above proposition, to verify the linear exponential dichotomy (ED) of steady states with small density stratification, it suffices to identify unstable eigenvalues whose real parts are large  relative to $\mu$ as given in \eqref{u0-L-infinity} and \eqref{E:Lyap-E}. A typical such situation is when the equilibrium $(\rho_0, v_0)$ is a perturbation to an equilibrium $\bar v$  of the Euler equation with uniform density $\bar \rho=const$ 
\be \label{E:Euler-0}
\bar \rho (v_t + v \cdot \nabla v ) + \nabla p + g \bar \rho \Ve_2=0 \ \text{ in } \ \Omega;  \quad v \cdot \N =0 \ \text{ on }\ \p\Omega. 
\ee  
Its vorticity formulation becomes 
\be \label{E:Euler-0-V}
\omega_t + v \cdot \nabla \omega=0, \quad \vec{m}_t =0, 
\ee
where $v$ is still recovered from the vorticity $\omega$ and the momenta $\vec{m}$ by \eqref{E:Vor-Vel}. 
At an equilibrium $\bar v(x)$ with the associated $\bar \omega$ and $\vec{ \bar m}$, the linearized equation in the vorticity formulation is given by 
\be \label{E:LEuler-0-V} 
\omega_t = \bar L \omega \triangleq  - \bar v \cdot \nabla \omega - v \cdot \nabla \bar \omega. 
\ee
The next proposition states that the existence of relatively strong unstable eigenvalues of equilibria of the homogeneous Euler equation \eqref{E:Euler-0} yields the exponential dichotomy (ED) of nearby steady flows with small density stratification. 

\begin{proposition} \label{P:spectra-1}
Suppose \eqref{E:Omega-2} and the following hold.  
\begin{itemize} 
\item $ \bar v\in H_\Euler^{k+2}$, $k> 2$, is an equilibrium of the Euler equation \eqref{E:Euler-0} with the constant density $\bar \rho>0$ such that its Lagrangian map $\bar u(t)$ (defined in \eqref{lagrangian-coordinate-map-0}) satisfies \eqref{u0-L-infinity} and \eqref{E:Lyap-E} with 
the parameters $\bar \mu$,  $\bar \mu_0$, and $\bar C_*$.
\item The linearized operator $\bar L$ defined in \eqref{E:LEuler-0-V} has an eigenvalue $\lambda_*$ with an eigenfunction $\bar \omega_*  \in L^2(\Omega, \R)$ such that Re$\lambda_*> k \bar \mu$. 
\end{itemize}
Then there exists  $\delta>0$ such that, for any equilibrium $(\rho_0, v_0)\in H^{k+1} (\Omega, \R) \times H_\Euler^{k+2}$ of the stratified Euler equation \eqref{stratified-euler} satisfying 
\be \label{E:temp-18.3}
\|v_0 - \bar v\|_{H^{k+1}}< \delta \min\{1, \text{Re}\lambda_* - k \bar \mu\}, \quad \|\rho_0 - \bar \rho\|_{H^{k+1}}^{\frac 12}  \le \min\{1, \delta (\text{Re}\lambda_* - k \bar \mu)\}, 
\ee
the exponential dichotomy (ED) holds for the linearized operator $L$ of \eqref{stratified-euler}  at $(\rho_0, v_0)$
with $\dim X_u < \infty$ and $\lambda_u > k \bar \mu$. 
\end{proposition}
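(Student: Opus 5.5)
The plan is to reduce Proposition \ref{P:spectra-1} to Proposition \ref{P:ED-1}. The smallness of $\nabla\rho_0$ is already built into \eqref{E:temp-18.3}, since $\|\nabla \rho_0\|_{H^k}\le \|\rho_0-\bar\rho\|_{H^{k+1}}$. What remains is to produce an eigenvalue $\lambda_0$ of $L$ at $(\rho_0,v_0)$ with $\text{Re}\lambda_0>k\mu$, with a quantitative margin $\text{Re}\lambda_0-k\mu\gtrsim \text{Re}\lambda_*-k\bar\mu$, and with an $H^1$ eigenfunction. The steps below are carried out in the vorticity formulation \eqref{E:LEuler-V}, on the space $Y_l$ with the weighted norm $|\cdot|_{a,l}$.

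\textbf{Step 1: Lyapunov exponent of the perturbed flow.} Choose $\mu=\bar\mu+\theta$, with $\theta$ a small fraction of $(\text{Re}\lambda_*-k\bar\mu)/k$. By Gronwall on $D u_0(t)-D\bar u(t)$, \eqref{u0-L-infinity} then holds for $u_0$ with a constant comparable to $\bar C_*$ on a fixed long time interval. On the remaining times we use sub-additivity of $\log\|Du_0(t)\|_{L^\infty}$. This requires $\|v_0-\bar v\|_{H^{k+1}}$ small in terms of $\theta$, which is exactly why the first bound in \eqref{E:temp-18.3} carries the factor $\text{Re}\lambda_*-k\bar\mu$. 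After this step the constants $C_*,\mu,\mu_0$ entering Proposition \ref{P:ED-1} are controlled by the barred ones.

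\textbf{Step 2: regularity of the unperturbed eigenfunction.} First I upgrade $\bar\omega_*$ to $H^{k-1}$. Applying the argument of \eqref{E:temp-18.1} to $\bar L$, with the compact smoothing part $-v\cdot\nabla\bar\omega$ and no $\tilde L_3$, gives $\bar\omega_*\in H^{l-1}$ inductively for $l\le k$, since $\text{Re}\lambda_*>k\bar\mu$. Thus $\lambda_*$ is an isolated eigenvalue of finite multiplicity of $\bar L$ on $H^{k-1}$. Indeed, $e^{t\bar L}$ is a compact perturbation of transport, whose growth rate is at most $(k-1)\bar\mu$ on $H^{k-1}$, so the essential spectral radius bound places $\lambda_*$ outside the essential spectrum.

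\textbf{Step 3: perturbing the eigenvalue.} Embed $\bar L$ into the $(\rho,\omega,\vec m)$ system as the operator $\bar{\tilde L}$ with $\rho$ decoupled, i.e.\ $\rho_t=-\bar v\cdot\nabla\rho$. Then $\tilde L-\bar{\tilde L}$ splits into two pieces. The transport difference $(v_0-\bar v)\cdot\nabla$ loses a derivative. The remaining terms, $-v\cdot\nabla(\omega_0-\bar\omega)$ and $\tilde L_3$, are small on $Y_l$ in the weighted norm, as in \eqref{E:temp-17}. The derivative loss is handled by working with semigroups rather than resolvents. On a compact time interval, compare $e^{t\tilde L}$ with $e^{t\bar{\tilde L}}$ in $\cL(Y_k,Y_{k-1})$, where the estimate is small by the Duhamel formula and the $H^{k+1}$-closeness of $v_0$ to $\bar v$. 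Combine this with uniform $\cL(Y_k)$ bounds; this is the "regularity gap" argument of \cite{lin-zeng-manifold}. From these ingredients, the Riesz projection of $e^{T\tilde L}$ onto the spectral region near $e^{T\lambda_*}$ is nonzero. This yields an eigenvalue $\lambda_0$ of $\tilde L$ with $|\lambda_0-\lambda_*|$ small, so that $\text{Re}\lambda_0-k\mu\ge\frac12(\text{Re}\lambda_*-k\bar\mu)$. The eigenfunction is $H^1$ by construction.

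\textbf{Step 4: conclusion and main obstacle.} Taking $\delta$ small, \eqref{E:temp-18.3} then implies \eqref{E:temp-16.5} with the $\ep$ of Proposition \ref{P:ED-1}, and that proposition gives (ED) with $\dim X_u<\infty$ and $\lambda_u>k\mu\ge k\bar\mu$. The main obstacle is Step 3, because the perturbation is unbounded, so standard Kato resolvent perturbation fails. If the semigroup comparison turns out to be too delicate, I would first try a Lyapunov–Schmidt reduction instead. Write $\lambda_0-\tilde L$ in the integral form \eqref{E:temp-18.1} with $u_0$ in place of $\bar u$, which absorbs the transport difference exactly. Then solve for $\lambda_0$ near $\lambda_*$ via a finite-dimensional determinant equation (Rouché) on the generalized eigenspace of $\lambda_*$.
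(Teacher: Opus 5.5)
Your overall reduction to Proposition \ref{P:ED-1} and your Step 1 agree with the paper. Since $\bar\mu>\bar\mu_0$ strictly, the paper gets $\mu=\bar\mu$ directly from Corollary \ref{C:sub-add}, so your extra $\theta$ is harmless but unnecessary. The gap is Step 3. Smallness of $e^{T\tilde L}-e^{T\bar{\tilde L}}$ in $\cL(Y_k,Y_{k-1})$, together with uniform bounds on $Y_k$, does not show that a small circle $\Gamma$ around $e^{T\lambda_*}$ lies in the resolvent set of $e^{T\tilde L}$. It shows even less that the Riesz projection over $\Gamma$ is nonzero. Indeed, write $(z-e^{T\tilde L})^{-1}=\big(I-(z-e^{T\bar{\tilde L}})^{-1}(e^{T\tilde L}-e^{T\bar{\tilde L}})\big)^{-1}(z-e^{T\bar{\tilde L}})^{-1}$. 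The operator inside is small only as a map $Y_k\to Y_{k-1}$, and $(z-e^{T\bar{\tilde L}})^{-1}$ does not bring $Y_{k-1}$ back into $Y_k$. In general, perturbations that are bounded in a strong norm but small only from the strong to a weak norm can create essential spectrum that engulfs an isolated eigenvalue. To close this you would need an extra inequality of Lasota--Yorke type, uniform over the admissible $(\rho_0,v_0)$, e.g.\ $|e^{T\tilde L}f|_{a,k}\le Ce^{T(k\mu+Ca^{-1})}|f|_{a,k}+C_T|f|_{a,k-1}$. This does follow from Duhamel's formula and $\tilde L_2\in\cL(Y_{k-1},Y_k)$ in \eqref{E:temp-17}. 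You would then also need a Keller--Liverani type stability theorem for spectral projections. With both added, your route would be a legitimate semigroup-level alternative, but as written the conclusion does not follow from the ingredients you list.

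Your fallback is the paper's route, but ``absorbs the transport difference exactly'' skips its one substantive estimate. Rewriting the resolvent problem along the perturbed flow, as in \eqref{E:temp-20}, gives $I-A(\lambda)$ with $A(\lambda)$ built from compositions with $u_0(\tau)$. However, $\lambda_*$ is a characteristic value of $I-\bar A(\lambda)$, where $\bar A(\lambda)\omega=-\int_{-\infty}^0e^{\lambda\tau}(v\cdot\nabla\bar\omega)\circ\bar u(\tau)\,d\tau$ is built along $\bar u$. Replacing $u_0(\tau)$ by $\bar u(\tau)$ in a composition again costs a derivative. The paper handles this in three observations:
\begin{itemize}
\item By \eqref{E:temp-22}, everything in $A(\lambda)$ except $A_0(\lambda)\omega=-\int_{-\infty}^0e^{\lambda\tau}(v\cdot\nabla\bar\omega)\circ u_0(\tau)\,d\tau$ is $O(\|v_0-\bar v\|_{H^{k+1}}+a^{-1})$.
\item The map $\omega\mapsto v\cdot\nabla\bar\omega$ sends $L^2$ into $H^1$, so the lost derivative is affordable there.
\item $\|u_0(\tau)-\bar u(\tau)\|_{L^\infty}\lesssim e^{\bar\mu|\tau|}\|v_0-\bar v\|_{H^k}$, via the homotopy $sv_0+(1-s)\bar v$ with Corollary \ref{C:sub-add} applied uniformly in $s$.
\end{itemize}
Together with volume preservation, this yields \eqref{E:temp-23}: $A(\lambda)$ is $C\delta$-close to $\mathrm{diag}(0,\bar A(\lambda),0)$ in $\cL(Y_1)$. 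The paper works on $Y_1$, so your Step 2 regularity upgrade is not needed. There, $I-\bar A(\lambda)$ is an analytic family of index-zero Fredholm operators for $\mathrm{Re}\,\lambda>\bar\mu$. Lyapunov--Schmidt reduction plus persistence of zeros of analytic functions then produce $\lambda_0$ near $\lambda_*$ with $\ker(I-A(\lambda_0))\neq\{0\}$ and an $H^1$ eigenfunction. The missing idea is this comparison of $A_0(\lambda)$ with $\bar A(\lambda)$, or alternatively the Lasota--Yorke/Keller--Liverani input above.
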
 

\begin{remark} 
According to the above proposition, if the first assumption in \eqref{E:lambda-1} is also satisfied by $(\rho_0, v_0)$, then Theorem \ref{main-theorem} applies.
Notice that the integer $K_0$ in Theorem \ref{main-theorem} is determined only by $k+r$. In concrete applications, one may try to verify \eqref{E:lambda-1} for $(\rho_0, v_0)$ using a perturbation argument assuming that it holds for $\bar v$. 
\end{remark}

In order to prove this proposition, we need the following lemma on convergent sequences of sub-additive functions. 

\begin{lemma} \label{L:sub-add}
Let $f_n(t)$, $t \ge 0$ and $n \in \mathbb{N}$, be a sequence of functions satisfying 
\[
\lim_{n\to \infty} f_n(t) = f_0(t), \quad |f_n(t)| \le C_0 t, \quad f_n(t+t') \le f_n(t)  + f_n (t'), \quad \forall t, t' \ge 0, \; n \in \mathbb{N}, 
\]
for some $C_0 \ge 0$, then for any $\ep>0$,  there exist $N\in \mathbb{N}$ and $\bar C \in \R$ such that 
\[
f_n(t) \le \bar C + (\mu_0 +\ep) t, \; \forall n \ge N, \ t\ge 0, \; \text{ where } \ \mu_0 = \inf_{t>0} \frac 1t f_0(t) \in [-C_0, C_0]. 
\]
\end{lemma}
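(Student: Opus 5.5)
Fix $\ep>0$ and choose $T>0$ with $f_0(T)/T < \mu_0 + \ep/2$; such a $T$ exists by the definition of $\mu_0$ as an infimum. Pointwise convergence at this single time $T$ gives $N\in\mathbb{N}$ such that $f_n(T) \le f_0(T) + \ep T/2 < (\mu_0+\ep)T$ for all $n\ge N$. This is the only place the convergence $f_n\to f_0$ is used. Everything after this is uniform in $n$, coming from subadditivity and the uniform linear bound $|f_n(t)|\le C_0 t$. Before using the two estimates, I will note that $\mu_0\in[-C_0,C_0]$ follows from $|f_0(t)|\le C_0 t$, which is inherited in the limit.

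Next, for arbitrary $t\ge 0$, write $t = mT + s$ with $m\in\mathbb{N}\cup\{0\}$ and $s\in[0,T)$. Iterating subadditivity gives
\[
f_n(t) \le m f_n(T) + f_n(s) \le m(\mu_0+\ep)T + C_0 s, \qquad n\ge N.
\]
Since $mT = t - s$, the right side equals $(\mu_0+\ep)t + \big(C_0 - (\mu_0+\ep)\big)s$. Using $0\le s<T$, this is bounded by $(\mu_0+\ep)t + \bar C$ with $\bar C = \big(C_0 + |\mu_0| + \ep\big)T$. This $\bar C$ depends only on $\ep$ through $T$, and not on $n$, which is the claim.

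There is one edge case to handle: subadditivity at $t=t'=0$ gives $f_n(0)\le 2f_n(0)$, and the linear bound gives $f_n(0)=0$. So the term $f_n(s)$ with $s=0$ is harmless, and the case $m=0$ is covered by the bound $f_n(s)\le C_0 s\le C_0T$.

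I do not expect a real obstacle. The only point needing care is to use pointwise convergence at one fixed time $T$, chosen before $N$. Trying to get uniform convergence on compact sets would be unnecessary and is not available from the hypotheses. The remainder $s$ is absorbed into $\bar C$ by the uniform bound $C_0$.
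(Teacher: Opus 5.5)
Your argument is correct and is essentially the paper's proof: fix one time $T$ with $f_0(T)/T$ close to $\mu_0$, use pointwise convergence only at $T$, and iterate subadditivity along $t=mT+s$, with the remainder controlled by $|f_n(s)|\le C_0 s$. The differences are cosmetic. You take $T$ directly from the definition of the infimum rather than via Fekete's lemma. You also absorb the remainder into $\bar C$ in one step instead of splitting into $m\ge M$ and $m<M$, which sidesteps any sign consideration on $\mu_0+\ep$ in the short-time range.
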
 

\begin{proof} 
Due to the sub-additivity of $f_0(t)$ and Fekete’s lemma, it holds $\lim_{t\to \infty} \frac 1t f_0(t) =\mu_0$. Hence there exists $t_0\ge 1$ such that $\frac 1{t_0} f_0(t_0) < \mu_0 +\frac \ep4$. Since $\lim_{n\to \infty} f_n(t_0) = f_0(t_0)$, there exists $N \in \mathbb{N}$ such that $|f_n(t_0) - f_0(t_0)| \le \frac \ep4 t_0$ for all $n \ge N$. Let $M\in \mathbb{N}$ satisfy $M > \frac {4C_0}\ep$. Therefore, for any $n \ge N$ and $t= mt_0 + t_1$ with $M \le m \in \mathbb{N}$ and $t_1\in [0, t_0)$, we have 
\begin{align*}
f_n(t) \le & m f_n(t_0) + f_n (t_1) \le m (f_n (t_0) - f_0(t_0)) + m f_0(t_0)  + C_0 t_1 \\ 
\le & (m t_0 \ep)/4 + mt_0 (\mu_0+ \ep/4) + (C_0/m) mt_0 \le (\mu_0 +\ep) t. 
\end{align*}
Finally, let $\bar C= C_0 M t_0$ and the lemma follows immediately. 
\end{proof} 

\begin{corollary} \label{C:sub-add}
Let $ \bar v\in H_\Euler^{k}$, $k> 2$, such that its Lagrangian map $\bar u(t)$  satisfies \eqref{u0-L-infinity} and \eqref{E:Lyap-E} with parameters $\bar \mu$, $\bar \mu_0$, and $\bar C_*$, then there exists $\delta, C_* >0$ such that, for any $v \in H_\Euler^{k}$ with $\|v - \bar v\|_{H^k} < \delta$, its Lagrangian map $u(t)$ satisfies \eqref{u0-L-infinity}  and \eqref{E:Lyap-E} with $\mu = \bar \mu > \bar \mu_0$ and $C_*$. Here all Lagrangian maps are defined as in \eqref{lagrangian-coordinate-map-0}. 
\end{corollary}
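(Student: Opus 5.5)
We apply Lemma \ref{L:sub-add} to the functions $f_v(t) = \log \|Du(t)\|_{L^\infty}$ and $\tilde f_v(t) = \log\|Du(-t)\|_{L^\infty}$, $t\ge 0$. The chain rule $D u(t+t') = (Du(t))\circ u(t') \, Du(t')$ and the volume preservation of $u(t')$ give $\|Du(t+t')\|_{L^\infty} \le \|Du(t)\|_{L^\infty}\|Du(t')\|_{L^\infty}$, so both functions are sub-additive. Since $\det Du \equiv 1$, we have $\|Du(t)\|_{L^\infty} \ge 1$ (up to a dimensional constant, which can be absorbed by using an operator norm), so $f_v(t) \ge 0$. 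Gronwall applied to $\p_t Du = (Dv\circ u) Du$ gives $f_v(t) \le \|Dv\|_{L^\infty} t \le C_0 t$, with $C_0 = \|D\bar v\|_{L^\infty} + C\delta$, using $H^k \hookrightarrow W^{1,\infty}$ for $k>2$. So the hypotheses $|f_n(t)|\le C_0 t$ and sub-additivity hold uniformly in $v$ near $\bar v$.

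The lemma is stated for sequences, so we argue by contradiction. Suppose no $\delta, C_*$ exist. Then there are $v_n \to \bar v$ in $H^k$ such that $\sup_{t\in\R} e^{-\bar\mu|t|}\|Du_n(t)\|_{L^\infty} \to \infty$. Passing to a subsequence and possibly reversing time (replacing $v$ by $-v$), we may assume the blow-up occurs for $t\ge 0$. Next we establish pointwise convergence $f_{v_n}(t) \to f_{\bar v}(t)$ for each fixed $t$. This follows from continuous dependence of the ODE flow on the vector field in $C^1$: the difference $Du_n(t) - D\bar u(t)$ satisfies a linear ODE forced by $(Dv_n - D\bar v)\circ u_n$ and $D\bar v\circ u_n - D\bar v\circ \bar u$. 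The first forcing is small in $L^\infty$. The second is small because $\|u_n - \bar u\|_{L^\infty} \to 0$ and $D\bar v$ is uniformly continuous, being in $H^{k-1}\subset C^{0,\alpha}$. Gronwall on $[0,t]$ then gives $\|Du_n(t) - D\bar u(t)\|_{L^\infty}\to 0$, and since the norms are at least $1$, the logarithms converge. This is the step I expect to need the most care, particularly the uniform continuity on unbounded $\Omega$ and the boundary-preservation issue; both are handled by the Sobolev embedding.

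Now apply Lemma \ref{L:sub-add} with $\ep = \bar\mu - \bar\mu_0 > 0$. Since $\inf_{t>0} t^{-1} f_{\bar v}(t) \le \bar \mu_0$ by \eqref{E:Lyap-E}, we obtain $N$ and $\bar C$ with $f_{v_n}(t) \le \bar C + \bar\mu t$ for all $n\ge N$ and $t \ge 0$. Hence $e^{-\bar\mu t}\|Du_n(t)\|_{L^\infty}\le e^{\bar C}$, contradicting the blow-up. Doing this for both time directions yields $\delta$ and $C_* = e^{\bar C}$ such that \eqref{u0-L-infinity} holds with $\mu=\bar\mu$ for all $v$ in the $\delta$-ball.

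Finally, \eqref{E:Lyap-E} for $v$ follows. The uniform bound gives $\lim_{t\to\pm\infty}|t|^{-1}\log\|Du(t)\|_{L^\infty}\le \bar\mu$, but \eqref{E:Lyap-E} asks for strict inequality. To get it, we run the argument above with $\ep = (\bar\mu-\bar\mu_0)/2$. This gives $f_v(t)\le \bar C + \tfrac{\bar\mu+\bar\mu_0}{2}t$, so $\mu_0(v) \le (\bar\mu+\bar\mu_0)/2 < \bar\mu$, and the bound with $\mu=\bar\mu$ still holds.
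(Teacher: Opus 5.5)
Your proof is correct and follows the paper's argument: you argue by contradiction with a sequence $v_n \to \bar v$ and apply Lemma \ref{L:sub-add} to $f_n(t) = \log\|Du_n(t)\|_{L^\infty}$ with $\ep = \bar\mu - \bar\mu_0$. You also supply details the paper calls clear, namely the pointwise convergence $f_n(t)\to f_0(t)$ and $f_n \ge 0$ via the operator norm, and your rerun with $\ep = (\bar\mu-\bar\mu_0)/2$ secures the strict inequality in \eqref{E:Lyap-E}, which the paper's proof leaves implicit.
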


\begin{proof}
We argue by contradiction. Assume the corollary is false, then there exist sequences $v_n \in H_\Euler^{k}$ and $t_n \in \R$  such that 
\be \label{E:temp-18.5}
\|v_n - \bar v\|_{H^k} + \|D v_n - D \bar v\|_{L^\infty} < 1/n, \quad \|D u_n(t_n)\|_{L^\infty} \ge n e^{\mu t_n}, 
\ee
where $u_n(t)$ are the Lagrangian map of $v_n$. Without loss of generality, assume $t_n \ge 0$. Let $f_n(t) = \log \|D u_n(t)\|_{L^\infty}$ and $f_0(t) = \log \|D \bar u(t)\|_{L^\infty}$. Clearly $f_n$ and $f_0$ satisfy the assumptions of Lemma \ref{L:sub-add} with  $C_0 = \|D \bar v\|_{L^\infty} +1$ and $\mu_0<\bar \mu$ due to \eqref{E:Lyap-E}. Then \eqref{E:temp-18.5} contradicts with Lemma \ref{L:sub-add} with $\ep = \mu - \mu_0$. Hence the corollary holds. 
\end{proof}

With the above preparation, we are ready to prove Proposition \ref{P:spectra-1}.

\begin{proof}[Proof of Proposition \ref{P:spectra-1}]  
By Corollary \ref{C:sub-add}, there exist $\delta, C_*>0$ such that \eqref{u0-L-infinity} and \eqref{E:Lyap-E} hold with $\bar \mu$ and $C_*$ for the Lagrangian map $u_0(t)$ (as defined in \eqref{lagrangian-coordinate-map-0}) of any $v_0$ satisfying \eqref{E:temp-18.3}. 


Let $(\omega_0, \vec{m}_0)$ and $(\bar \omega, \vec{\bar m})$ be the vorticity and momenta of the equilibrium $(\rho_0, v_0)$ and $(\bar \rho, \bar v)$, respectively, and $\tilde L = \tilde L_1 + \tilde L_2 + \tilde L_3$ be the linear operator of the stratified Euler equation in the vorticity formulation at $(\rho_0, \omega_0, \vec{m}_0)$, where the splitting of $\tilde L$ is same as in \eqref{E:LEuler-V}. Clearly $\tilde L$ is conjugate to $L$ in the Eulerian formulation. 
To analyze the resolvent of $\tilde L$, for $\lambda \in \mathbb{C}$ and $(\tilde \rho, \tilde \omega, \vec{\tilde m})^T$, 
\be \label{E:temp-19}
(\lambda - \tilde L) \begin{pmatrix} \rho \\ \omega \\ \vec{m} \end{pmatrix} = \begin{pmatrix} \tilde \rho \\ \tilde \omega \\  \vec{\tilde m} \end{pmatrix} \Longleftrightarrow 
\begin{pmatrix} \lambda \rho + v_0 \cdot \nabla \rho 
\\ \lambda \omega + v_0 \cdot \nabla \omega 
\\ \lambda \vec{ m} \end{pmatrix} = \big(\tilde L_2 + \tilde L_3 \big) \begin{pmatrix} \rho \\ \omega \\ \vec{m} \end{pmatrix} +\begin{pmatrix} \tilde \rho \\ \tilde \omega \\  \vec{\tilde m} \end{pmatrix}.
\ee
We proceed as in the proof of Proposition \ref{P:ED-1} using the same spaces $Y_l$, equipped with the norm $|\cdot|_{a, l}$ where $a =(\eta+ \|\nabla \rho_0\|_{H^k})^{-\frac 12} \ge 1$. If Re$\lambda > k\bar \mu$, then \eqref{E:temp-19} is equivalent to 
\be \label{E:temp-20}
( \rho, \omega, \vec{m})^T = \int_{-\infty}^0 e^{\lambda \tau} \Big( ( \tilde L_2+ \tilde L_3)( \rho, \omega, \vec{m})^T + (\tilde \rho, \tilde \omega, \vec{\tilde m})^T \Big) \circ u_0 (\tau) d\tau, 
\ee
where $\vec{m} =\frac 1\lambda \vec{\tilde m}$ in particular. We denote \eqref{E:temp-20} symbolically as 
\be \label{E:temp-20.5}
(I- A(\lambda)) ( \rho, \omega, \vec{m})^T =  B(\lambda) (\tilde \rho, \tilde \omega, \vec{\tilde m})^T. 
\ee
If Re$\lambda > k \mu$ and $k \ge l \in \mathbb{N}$, then $A, B \in \cL(Y_l)$.  To single out the principal part of $A$, let 
\[
\bar A (\lambda) \omega  = - \int_{-\infty}^0 e^{\lambda \tau} (v\cdot \nabla \bar \omega) \circ \bar u (\tau) d\tau, \quad A_0 (\lambda) \omega  = - \int_{-\infty}^0 e^{\lambda \tau} (v\cdot \nabla \bar \omega) \circ u_0 (\tau) d\tau, 
\]
which, due to \eqref{u0-L-infinity}, \eqref{u0-hl}, \eqref{E:composition-1}, and \eqref{E:temp-17}, satisfy 
\be \label{E:temp-21}
\|\bar A(\lambda)\|_{\cL(H^{l-1} (\Omega, \R), H^{l} (\Omega, \R))}, \, \|A_0(\lambda)\|_{\cL(H^{l-1} (\Omega, \R), H^{l} (\Omega, \R))} \le C/(\text{Re}\lambda - l\bar \mu), \quad 1 \le l \le k,
\ee
\be \label{E:temp-22}
\| A (\lambda)- diag (0, A_0(\lambda), 0)\|_{\cL (Y_l)} \le C (\|v_0 - \bar v\|_{H^{k+1}} + a^{-1} 
) / (\text{Re}\lambda - l\bar \mu ), \quad 1\le l \le k. 
\ee
To estimate $A_0(\lambda) - \bar A(\lambda)$, let $u(s, t, \cdot)$ be the Lagrangian map as defined in \eqref{lagrangian-coordinate-map-0} associated with $v_*(s, \cdot) = s v_0 + (1-s) \bar v$. Linearizing in $s$, one may deduce  
\begin{align*}
\p_{st} u = ((D v_*) \circ u )\p_s u + (v_0 - \bar v) \circ u  \implies & \p_s u (s, t) = \int_0^t \big( (D u (s, t-\tau)) ( v_0 - \bar v) \big) \circ u(s, \tau)  d\tau\\
\implies & \|\p_s u\|_{L^\infty} \le C e^{- \bar \mu t} \|v_0 - \bar v\|_{H^{k}},  
\end{align*}
where Corollary \ref{C:sub-add} was also applied to $v(s, \cdot)$. Consequently, 
\begin{align}
& (A_0 (\lambda)- \bar A(\lambda)) \omega = \int_0^1 \int_{-\infty}^0 e^{\lambda \tau} \p_s u (s, \tau) \cdot \big( (\nabla (v\cdot \nabla \bar \omega)) \circ u (s, \tau) \big) d\tau ds  \notag \\
\implies & \|(A_0(\lambda) - \bar A(\lambda)) \|_{\cL(L^2(\Omega, \R))}  \le \frac C{\text{Re}\lambda - \bar \mu} \|v_0 - \bar v\|_{H^{k}}  \notag \\
\implies  &  \| A (\lambda)- diag (0, \bar A(\lambda), 0)\|_{\cL (Y_1)} \le \frac C{\text{Re}\lambda - \bar \mu}  \big(
\|v_0 - \bar v\|_{H^{k}} + (\eta+ \|\nabla \rho_0\|_{H^k})^{\frac 12} \big) \le C \delta \label{E:temp-23},
\end{align}
where $\eta$ is chosen to be sufficiently small and \eqref{E:temp-18.3} is used.

From the definition of $\bar A(\lambda)$, we notice that $(I- \bar A(\lambda))^{-1} \in \cL(L^2 (\Omega, \R))$ iff $\lambda \notin \sigma (\bar L)$, where we recall that $\bar L$ defined in \eqref{E:LEuler-0-V} is the linearized operator of the Euler equation \eqref{E:Euler-0-V} at $\bar \omega$. Apparently $\bar L$ is a compact perturbation of $\omega \to -\bar v \cdot \nabla \omega$ which generates a strongly $C^0$ group of unitary operators on $L^2 (\Omega, \R)$. 
Therefore, for any $\ep>0$, the set $\{ \lambda \in \sigma(\bar L): \text{Re}\lambda \ge \ep\}$ is finite and their eigenspaces have a finite dimension in total. Moreover $I- \bar A(\lambda) \in \cL(L^2 (\Omega, \R))$ is Fredholm with index zero due to the compactness of $\bar A(\lambda)$ on $L^2 (\Omega, \R)$. From its definition, $\bar A(\lambda) \in \cL(L^2 (\Omega, \R), H^1 (\Omega, \R))$ depends on $\lambda \in \mathbb{C}$ analytically if Re$\lambda > \bar \mu$ due to the volume preservation of $\bar u(t)$. Using \eqref{E:temp-23}, the assumption $\lambda_* \in \sigma(\bar L)$ with Re$\lambda_*>k \bar \mu$, Fredholm property of $I- \bar A(\lambda)$, Lyapunov-Schmidt reduction, and a continuation argument of zeros of complex analytic functions  (see, for example, \cite{ko95}), we have that, for $\delta\ll1$, there exists $\lambda_0$ near $\lambda_*$ such that $\ker (I-A(\lambda_0))$ is non-trivial. Therefore $\lambda_0 \in \sigma(\tilde L)$ and Re$\lambda_0 > k \mu$ and thus the assumptions of Proposition \ref{P:ED-1} are satisfied which yields the desired exponential dichotomy of $L$ at $(\rho_0, v_0)$.    
\end{proof} 

We often encounter two classes of equilibria of the stratified Euler equation \eqref{stratified-euler} which are close to those of the homogeneous Euler equation \eqref{E:Euler-0}. 

\vspace{0.05in}
\noindent $\bullet$ {\it Shear flow:} 
\be \label{E:shear-1}
\big(\rho_0=\rho_0(x_2), \ v_0= U(x_2) \Ve_1 \big) \; \text{ on } \; p_0 = p_0(x_2) = - g \int \rho_0 dx_2, \; \Omega = \BBT \times (-b, b),
\ \text{ or } \ \Omega = \BBT^2. 
\ee
for any $\rho_0>0$ and $U$. Here one recalls that $g=0$ is assumed in \eqref{E:g} if $\Omega=\BBT^2$. In particular the first assumption $\lambda_u > \lambda_{cs} + 3K_0 \mu$ in \eqref{E:lambda-1} is ensured  automatically by $\lambda_u > \lambda_{cs}$ since $\mu$ can be chosen as any positive quantity for shear flows. Hence Theorem \ref{main-theorem} applies to any  unstable shear flow satisfying \eqref{E:steady} with small density stratification $\nabla \rho_0$.    

\vspace{0.05in}
\noindent $\bullet$ {\it From nonlinear elliptic equation.}  Here we assume $g=0$ and, for simplicity, $\Omega$ is bounded and simply connected. Suppose $f, \gamma \in C^{k} (\R, \R)$ and $\psi: \Omega \to \R$ such that $f>0$ and satisfy 
\be \label{E:steady-1}
\Delta \psi + \frac {f'(\psi)}{2f(\psi)} |\nabla \psi|^2 = \gamma (\psi), \ \text{ in } \ \Omega; \quad \psi|_{\p \Omega}=0. 
\ee
Let 
\be \label{E:steady-3}
v_0 = J \nabla \psi, \quad \rho_0 = f(\psi), \; \text{ where } \; J = \begin{pmatrix} 0 & -1 \\ 1 & 0 \end{pmatrix}, 
\ee
then the corresponding vorticity $\omega_0 = \Delta \psi$. One may deduce from \eqref{E:steady-1} 
\begin{align*}
& J \nabla \psi \cdot \nabla \big( f(\psi) \Delta \psi + \tfrac 12 f'(\psi) |\nabla \psi|^2 \big)=0 \\ 
\implies & f(\psi)J \nabla \psi \cdot \nabla \Delta \psi + J f'(\psi) \nabla \psi \cdot ((J \nabla \psi \cdot \nabla) J \nabla \psi) =0 \\
\implies & \rho_0 v_0 \cdot \nabla \omega_0 + J \nabla \rho_0 \cdot (v_0 \cdot \nabla v_0) =0 , 
\end{align*}
which is the steady vorticity equation of the stratified Euler equation \eqref{stratified-euler}. The steady density equation is obviously satisfied by $\rho_0$ and $v_0$ and thus $(\rho_0, v_0)$ is a steady state of \eqref{stratified-euler}. Suppose $\psi_*$ is a non-degenerate solution to \eqref{E:steady-1} with $f= a= const$. Here the non-degeneracy means that the linearized operator $\Delta - \gamma'(\psi_*)$ has a bounded inverse. By an Implicit Function Theorem argument, for any $f(s) = a+ f_1(s)$ with $\|f_1\|_{C^{k}} \ll 1$, \eqref{E:steady-1} has a unique solution $\psi$ and thus yields a steady solution $(\rho_0, v_0)$ satisfying $\| \rho_0-a\|_{H^k} \ll 1$. 

In both cases, the exponential dichotomy of steady solutions to the homogeneous Euler equation \eqref{E:Euler-0} implies that of the stratified Euler equation \eqref{stratified-euler}. There is a rich literature on studies of the stability of steady solution to \eqref{E:Euler-0}, see, e.~g.~\cite{To35, LinC55, DR04, MP94, FH98, lin03, Lin05}. 

\begin{remark} 
The analysis in this section takes advantage of the vorticity formulation available in two dimensions. In particular the compactness of  the term $v \cdot \nabla \omega_0$ as an operator on $\omega$ in the linearization \eqref{E:LEuler-V} is very helpful, while the term $v \cdot \nabla v_0$ in \eqref{linearization} is only a bounded operator on $v$. 
\end{remark}

\subsection{Unstable Manifolds of a class of 2-dim Stratified Shear Flows on $\Omega= \BBT \times \R$}

In this subsection, we consider the unstable manifolds and nonlinear instability of a shear flow 
\be \label{E:steady-2}
\big(\rho_0(x_2), v_0= \ep U(x_2) \Ve_1 \big), \; \text{ along with } \ p_0 = p_0(x_2) = - g \int \rho_0 dx_2, \; g>0,  \; \text{ on } \ \Omega = \mathbb T \times \mathbb R,
\ee
satisfying \eqref{E:steady}. The Lagrangian maps of shear flows do not exhibit any exponential growth, so \eqref{u0-L-infinity} and \eqref{E:Lyap-E} are satisfied with any $\mu >0$. In order to obtain the local unstable manifold, one only needs to verify the exponential dichotomy (ED) of the linearized system.
We shall discuss the classical Rayleigh-Taylor instability problem ($\rho_0' > 0$) with a small background shear flow ($\ep \ll 1$) based on the linear analysis of the instability and essential spectra by the geometric optics method {\it etc.} (see, e.g., \cite{Vishik1996SpectrumOS,Shvydkoy-Latushkin,FV91,essential-spectrum,roman}).

\begin{remark}[Characterization of Spectra]
For any densely defined closed linear operator $A$ on Banach spaces, its spectrum is often decomposed into two parts: $\sigma(A) = \sigma_{d}(A) \cup \sigma_{ess}(A)$. The first part represents the discrete spectrum, which consists of isolated eigenvalues with finite algebraic multiplicity,
and the second part the essential spectrum is the complement. 
Another common definition of the essential spectrum is characterized by Fredholm operators, that is, $\sigma^F_{ess}(A)=\{\lambda \in \mathbb C: A - \lambda \text{ is not Fredholm}\}$. By Nussbaum's formula in \cite{nussbaum}, the essential spectral radii associated with these two definitions coincide for bounded linear operators. 
\end{remark}

\begin{proposition} \label{rayleigh-taylor-infinite-cylinder}
Consider shear flows given in \eqref{E:steady-2} satisfying \eqref{E:steady}. 
Assume 
\[ 
U',U'',\rho_0',\rho_0'' \to 0 \text{ as } |x_2| \to \infty,\; \lim_{x_2 \to \infty} \rho_0 (x_2) = \rho_\pm>0, \ \text{ and } \ U'(x_2) \neq 0,  \; \rho_0'(x_2) > 0, \; \forall x_2 \in \mathbb R,
\]
then there exists $\varepsilon_0 > 0$ such that for any $0<\varepsilon<\varepsilon_0$, the shear flow $(\rho_0(x_2), \ep U(x_2))$  satisfies the exponential dichotomy (ED) with $0< \dim X_u < \infty$ and has a local unstable manifold as provided in Theorem \ref{main-theorem}. 
\end{proposition}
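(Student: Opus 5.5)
The plan is to verify the single nontrivial hypothesis, (ED), and then invoke Theorem \ref{main-theorem}. The other hypotheses are already in hand. Since $u_0(t,x)=(x_1+t\ep U(x_2),x_2)$, the quantity $\|Du_0(t)\|_{L^\infty}$ grows only linearly in $t$, so \eqref{u0-L-infinity} and \eqref{E:Lyap-E} hold for every $\mu>0$, and the gap condition in \eqref{E:lambda-1} reduces to $\lambda_u>\lambda_{cs}$. Moreover, once $\dim X_u<\infty$ is established, $X_u$ is closed in every $H^{k-r_1}$. Everything therefore rests on a spectral picture of $L$ on $H^k(\Omega,\R)\times H^k_\Euler$: a bound $\omega_{ess}$ on the essential growth rate of $e^{tL}$, together with finitely many isolated eigenvalues having real part strictly above $\omega_{ess}$.

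\textbf{Step 1: essential growth bound.} I would estimate the essential spectral radius of $e^{tL}$ on $H^k$ by the geometric optics (bicharacteristic) method of \cite{Vishik1996SpectrumOS,Shvydkoy-Latushkin,essential-spectrum,roman}. The steps are as follows.
\begin{itemize}
\item Along the bicharacteristics $\dot x=v_0(x)$, $\dot\xi=-(Dv_0)^T\xi$, the wave vector $\xi$ grows at most linearly in time.
\item The amplitude system for $(\delta\rho,\delta v)$ reduces to the local buoyancy ODE. At $\ep=0$ it gives growth at rate at most
\[
\Lambda\triangleq \sup_{x_2}\Big(\frac{g\rho_0'(x_2)}{\rho_0(x_2)}\Big)^{1/2},
\]
since the factor $\xi_1^2/|\xi|^2\le 1$.
\item For $\ep>0$, the shear terms contribute $O(\ep)$ perturbations plus polynomial-in-$t$ factors coming from the growth of $\xi$.
\end{itemize}
By Nussbaum's formula, this should yield $r_{ess}(e^{tL})\le e^{(\Lambda+C\ep+\eta)t}$ for any $\eta>0$. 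The decay assumptions $U',U'',\rho_0',\rho_0''\to0$ make the operators in $L_1$ that are compact relative to the transport part harmless at spatial infinity. They also ensure that the constants $\rho_\pm$ at infinity contribute nothing to the essential spectrum beyond $\Lambda$.

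\textbf{Step 2: a discrete eigenvalue above $\Lambda$.} At $\ep=0$ I would Fourier transform in $x_1$ with wave number $k_1\ne0$. The normal-mode problem becomes the Sturm--Liouville type problem
\[
-(\rho_0\phi')'+k_1^2\rho_0\phi=\frac{g k_1^2}{\lambda^2}\rho_0'\phi \quad\text{on } \R .
\]
I would show by a variational argument that $\lambda^2$ attains a maximum $\lambda_*^2>\Lambda^2$ in some mode $k_1$, following the known Rayleigh--Taylor results \cite{HG03,roman}. The maximizer should be found among low $|k_1|$, because the growth rate tends to $\Lambda$ from below only in the limit $|k_1|\to\infty$. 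The main obstacle is obtaining the strict inequality $\lambda_*>\Lambda$ on the unbounded line, with $\rho_0'>0$ everywhere. If the direct variational comparison fails, I would use a trial function localized near the maximum point of $g\rho_0'/\rho_0$ at moderate $k_1$, expecting a positive correction from the curvature term. Failing that, I would rely directly on the eigenvalue results of \cite{roman}. Once an eigenvalue $\lambda_*>\Lambda$ exists, Fredholm theory for $\lambda-L$ off $\sigma_{ess}$ and analytic perturbation in $\ep$ (via a Lyapunov--Schmidt continuation, as in the proof of Proposition \ref{P:spectra-1}) give an eigenvalue $\lambda_0(\ep)$ with $\mathrm{Re}\,\lambda_0>\Lambda+2C\ep$ for $\ep<\ep_0$.

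\textbf{Step 3: assembling (ED).} Fix
\[
\alpha\in\big(\Lambda+C\ep+\eta,\ \mathrm{Re}\,\lambda_0\big).
\]
The part of $\sigma(L)$ with $\mathrm{Re}\,\lambda\ge\alpha$ lies outside the essential spectrum, so it consists of finitely many eigenvalues of finite algebraic multiplicity. Let $\Pi_u$ be the corresponding Riesz projection, and set $X_u=\Pi_u(H^k\times H^k_\Euler)$, which has finite dimension, and $X_{cs}=\ker\Pi_u$. The estimate on $X_u$ is finite-dimensional linear algebra. On $X_{cs}$, the spectral radius of $e^{tL}|_{X_{cs}}$ equals the maximum of the essential radius and the moduli of its isolated eigenvalues. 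Each isolated eigenvalue of $e^{tL}$ of modulus exceeding the essential radius comes from an eigenvalue of $L$, by the point spectral mapping theorem for $C_0$ groups. Hence the spectral radius is at most $e^{\lambda_{cs}t}$ for any $\lambda_{cs}\in(\alpha',\alpha)$ chosen above all remaining real parts. Taking $\lambda_u$ below $\min\mathrm{Re}\,\sigma_u$ gives (ED), and Theorem \ref{main-theorem} then applies with $\mu$ small enough that $\mathcal I\neq\emptyset$.
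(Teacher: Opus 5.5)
There is a genuine gap, and it lies in Steps 1--2 taken together. Your Step 2 claim that some mode has growth rate $\lambda_*>\Lambda$ is false. Pairing your own Sturm--Liouville equation with $\bar\phi$ gives, for every nonzero $\phi\in H^1(\R)$,
\[
\lambda^2=\frac{g k_1^2\int\rho_0'|\phi|^2}{\int\rho_0\big(|\phi'|^2+k_1^2|\phi|^2\big)}<\frac{g\int\rho_0'|\phi|^2}{\int\rho_0|\phi|^2}\le\Lambda^2 .
\]
So every Rayleigh--Taylor growth rate lies strictly below $\Lambda$ and only approaches it as $|k_1|\to\infty$, as you yourself note. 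There is no maximizer, and no trial function or curvature correction can beat this bound. The eigenvalue that Theorem 3.7 of \cite{roman} supplies at a fixed wave number $m$ is a small-$\ep$ perturbation of such a rate, so it also lies below $\Lambda$. Consequently, with your essential bound $\Lambda+C\ep+\eta$, the interval for $\alpha$ in Step 3 is empty.

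There is a related problem with your perturbation step. At $\ep=0$ the fixed-mode eigenvalue sits inside the essential spectral disk of the full operator, whose radius is $e^{\Lambda t}$. So a Fredholm/Lyapunov--Schmidt continuation of the full $L$ ``off $\sigma_{ess}$'' is not justified. The perturbation has to be carried out within a single Fourier mode, which is what \cite{roman} does.

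The missing idea is the hypothesis $U'(x_2)\ne0$ for all $x_2$, which your argument never uses and which is the crux. For each fixed $\ep>0$, and not merely perturbatively, the shear tilts wave vectors along the bicharacteristics: $\xi(t)=\big(\xi_1,\ \xi_2-t\ep U'(x_2)\xi_1\big)$. Hence the buoyancy factor $\xi_1^2/|\xi(t)|^2$ decays like $(\ep U'(x_2)t)^{-2}$, and the local Rayleigh--Taylor amplification is only polynomial in $t$.

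This is exactly what the paper invokes through Corollary 3.6 and (66) of \cite{roman}: $r_{ess}(e^{tL},H^l)=e^{\nu t}$ with $\nu=\max\{0,\sup_{\{U'=0,\,\rho_0'>0\}}\sqrt{g\rho_0'/\rho_0}\}=0$, since that set is empty. The essential growth rate therefore jumps from $\Lambda$ at $\ep=0$ to $0$ for every $\ep>0$, a discontinuity your bound $\Lambda+C\ep$ cannot detect.

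With $\nu=0$, any eigenvalue with positive real part lies outside the essential disk. For instance, take $\lambda_0=-imc$ with $\mathrm{Im}\,c>0$ from Theorem 3.7 of \cite{roman}; this is the only place smallness of $\ep$ enters. A Riesz projection then yields (ED) with $0<\dim X_u<\infty$, as in your Step 3. The paper takes this projection for $G_1=e^{L}$ over $\p B_R(\mathbb{C})$ with $1<R<|e^{\lambda_0}|$, which also avoids any spectral mapping issue.
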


\begin{proof}
By Theorem 3.7 in \cite{roman}, fix a wave number $m \in \mathbb{N}$ of $x_1$, there exists $\varepsilon_0 > 0$ such that for any $0<\varepsilon<\varepsilon_0$, the linearized  Euler operator $L$ defined in \eqref{linearization} corresponding to the steady profile $(\rho_0, \varepsilon U)$ has at least one unstable eigenvalue $\lambda_0 = -imc$, Im$c > 0$, associated with an eigenfunction $(\rho, v)$ of the linearized Euler equation \eqref{linearization} in the form of 
\[
\rho =  e^{im x_1}\frac{\rho_0' \psi }{\ep U-c}, \quad  v= e^{im x_1}\big(  \psi' (x_2), -im \psi(x_2) \big),
\]
where $\psi =\psi(x_2) \in H^{k+r}$ is the solution to the Rayleigh-type system. 

Following the notations in \cite{roman}, for any $l \ge 0$, let $G_t \triangleq e^{tL}$ be the strongly continuous group on $H^l(\Omega,\mathbb R) \times H^l_\Euler$ generated by the linearization $L$. 
Let $r_{ess}(G_t,H^l)$ denote the essential spectral radius under $H^l$ norm. According to Corollary 3.6 and (66) in \cite{roman},
\begin{equation}
\label{essential-radius}
r_{ess}(G_t,H^l) = e^{t\nu}, \text{ where } \nu = \max\left(0,\;\sup_{\{U'(x_2) = 0,\; \rho_0'(x_2) > 0\}} \sqrt{\frac{g \rho_0'(x_2)}{\rho_0(x_2)}}\right), \quad \forall t\ge 0.
\end{equation}
In particular, since  $\{x_2 \in \mathbb R: U'(x_2) =0, \; \rho_0'(x_2) > 0 \}$ is empty by our assumption, we have $\nu = 0$ and $r_{ess}(G_t, H^k) = 1$.  Therefore,  only finitely many isolated eigenvalues of $G_t$ with finite total algebraic multiplicity exist outside the closed disk $\overline{B_R (\mathbb C)}$ for any $R>1$. Recalling $|e^{\lambda_0}| >1$, we can choose $\lambda_u> \log R > \lambda_{cs} > 0$ such that   
\[
R \in (1, |e^{\lambda_0}|) \setminus \{ |\alpha| : \alpha \in \sigma(G_1) \}, \quad e^{\lambda_u} \in \big(R, \min_{\alpha \in \sigma_u}  |\alpha| \big), \quad e^{\lambda_{cs}} \in \big(\max_{ \alpha \in \sigma_{cs}} |\alpha|, R \big),
\]
where 
\[
\sigma_u = \{ \alpha \in \sigma(G_1) : |\alpha| > R\}, \quad \sigma_{cs} = \sigma(G_1) \setminus \sigma_u. 
\] 
Define the spectral projection and the eigenspaces 
\[
\Pi_{cs} \triangleq \frac{1}{2 \pi i }\oint_{\p B_R(\mathbb{C})} (\lambda I - G_1)^{-1}d\lambda, 
\]
\[ 
X_{cs} = \Pi_{cs} (H^k (\Omega, \R) \times H_\Euler^k), \quad X_{u} = (I-\Pi_{cs}) (H^k (\Omega, \R) \times H_\Euler^k).  
\]
In particular $0< \dim X_u < \infty$ since $e^{\lambda_0} \in \sigma_u$. The finite dimensionality and the smoothness of generalized eigenfunctions imply that $X_u$ is closed in $H^l (\Omega, \R) \times H_\Euler^l$ for all $0 \le l \le k$. 
From the standard spectral theory, $X_u$ and $X_{cs}$ are invariant subspaces of $G_1$. 
In fact, the commutativity between $G_1$ and $G_t$ for any $t \in \R$ yields the commutativity $G_t \Pi_{cs}=\Pi_{cs} G_t$, which implies that $X_{cs}$ and $X_u$ are also invariant under $G_t$ for any $t$. 
On the one hand, $G_1|_{X_u}$ is conjugated to a finite dimensional matrix with the spectra $\sigma_u$. Hence it is straightforward to verify  
\[
\|G_n^{-1} \|_{\cL(X_u)} \le C e^{- \lambda_u n}, \quad \forall n \in \mathbb{N}. 
\]
On the other hand, 
\[
G_n|_{X_{cs}} =  \frac{1}{2 \pi i }\oint_{\p B_R(\mathbb{C})} \lambda^n (\lambda I - G_1)^{-1}d\lambda \implies 
\|G_n\|_{\cL(X_{cs})} \le C  e^{\lambda_{cs} n}, \quad \forall n \in \mathbb{N}. 
\]
Since $\|G_t\|_{\cL(H^k)}$ is uniformly bounded for $t \in [-1, 1]$, we obtain the dichotomy estimates
\be \label{E:temp-24}
\|e^{-tL} \|_{\cL(X_u)}  \le C e^{- \lambda_u t}, \quad \|e^{tL} \|_{\cL(X_{cs})}  \le C e^{\lambda_{cs} t}, \quad \forall t\ge 0.
\ee
Hence the exponential dichotomy (ED) is satisfied by the above $X_{u, cs}$ and $\lambda_{u, cs}$. 


Since $v_0$ is a shear flow, the exponential growth rate assumptions \eqref{u0-L-infinity} and \eqref{E:Lyap-E} of the Lagrangian coordinate map $u_0(t)$ are satisfied with any $\mu >0$. Therefore the proposition follows directly from Theorem \ref{main-theorem}. 
\end{proof}

A concrete example of a stratified shear flow $(U_0(x_2),\rho_0(x_2))$ satisfying the assumptions in Proposition \ref{rayleigh-taylor-infinite-cylinder}  is, for any $\rho_+ > \rho_->0$,  
\[
U(x_2) = \tanh(x_2), \quad \rho_0(x_2) = \frac 12(\rho_{+} - \rho_{-}) \tanh(x_2) + \frac12 (\rho_{+} + \rho_{-}).
\]

\section*{Acknowledgements}

ZL is supported in part by NSF of China under Grant No. 12494544. CZ is supported in part by US NSF grant  DMS-2350115. ZL and CZ would also like to thank Roman Shvydkoy for many fruitful conversations.

\bibliographystyle{abbrv}
\bibliography{Bibliography}
\end{document}